\newcommand*{\R}{\mathbb{R}}
\newcommand*{\norm}[1]{\| #1 \|}
\newcommand*{\innerproduct}[2]{\left \langle #1, #2 \right \rangle}
\newcommand*{\lambdamax}[1]{\lambda_{\max}\left (#1\right )}
\newcommand*{\lambdamin}[1]{\lambda_{\min}\left (#1\right )}
\def\Cone{C1}
\def\Ctwo{C2}
\def\Cthree{C3}
\def\Aone{\textnormal{A1}}
\def\Atwo{\textnormal{A2}}
\def\mf{\mu_f}
\def\mg{\mu_g}
\def\mA{\mu_A}
\def\Lf{L_f}
\def\Lg{L_g}
\def\0{\boldsymbol{0}}
\def\J{\mathrm{J}}
\def\Id{\operatorname{Id}}
\def\gra{\operatorname{gra}}
\def\Fb{F_\textnormal{b}}
\def\Ff{F_\textnormal{f}}
\def\zet{ \zeta_{\tau,\sigma}}
\def\w{{\omega}}
\def\F{{\mathcal F}}
\def\nf{{\nabla f}}
\def\ng{{\nabla g}}
\def\Phiinv{{\Phi_{{\tau,\sigma}} ^{-1}}}
\def\P{{\Phi_{{\tau,\sigma}}}}
\def\diag{\operatorname{diag}}
\DeclareMathOperator*{\dom}{dom}
\newtheorem{theorem}{Theorem}
\newtheorem{definition}{Definition}
\newtheorem{lemma}{Lemma}
\newtheorem{proposition}{Proposition}
\newtheorem{remark}{Remark}
\acrodef{lsc}[l.s.c.]{lower-semicontinuous}
\newacronym{PDG}{PDG}{primal-dual gradient}
\def\Cline#1#2{\@Cline#1#2\@nil}
\def\@Cline#1-#2#3\@nil{%
  \omit
  \@multicnt#1%
  \advance\@multispan\m@ne
  \ifnum\@multicnt=\@ne\@firstofone{&\omit}\fi
  \@multicnt#2%
  \advance\@multicnt-#1%
  \advance\@multispan\@ne
  \leaders\hrule\@height#3\hfill
  \cr}
\begin{document}

%

%

\runningauthor{Colin Dirren,
     Mattia Bianchi,
     Panagiotis D.~Grontas,
     John Lygeros,
     Florian D\"orfler}

\twocolumn[

\aistatstitle{Contractivity and linear convergence  in bilinear saddle-point problems: An operator-theoretic approach}

\aistatsauthor{
     Colin Dirren*\And
     Mattia Bianchi* \And
     Panagiotis D.~Grontas \And
     John Lygeros  \And
     Florian D\"orfler}

\aistatsaddress{ ETH Zürich \And ETH Zürich \And ETH Zürich \And ETH Zürich \And ETH Zürich}


%
]

\begin{abstract}
  We study the convex-concave bilinear saddle-point problem $\min_x \max_y f(x) + y^\top Ax - g(y)$, where both, only one, or none of the functions $f$ and $g$ are strongly convex, and suitable rank conditions on the matrix $A$ hold. The solution of this problem is at the core of many machine learning tasks. By employing tools from monotone operator theory, we systematically prove the contractivity (in turn, the linear convergence) of several first-order primal-dual algorithms, including the Chambolle--Pock method. Our approach results in concise  proofs, and it yields new convergence guarantees and tighter bounds compared to known results. 
\end{abstract}

\section{INTRODUCTION}
\label{sec:introduction}
We consider the bilinear saddle-point problem
\begin{equation}
	\min_{x \in \R^n} \max_{y \in \R^m}  \ f(x) +y^\top Ax -g(y), 
	\label{eq:bilinearSaddlePointProblem}
\end{equation}
where $f: \R^n \rightarrow \R \cup \{\infty\}$ and $g: \R^m \rightarrow \R \cup \{\infty\}$ are proper, convex and closed functions, and $A\in \R^{m \times n}$ is a coupling matrix. The related primal problem is 
\begin{equation}
    \label{eq:primal}
	\min_{x \in \R^n} \ f(x) + g^{*}(Ax),
\end{equation}
where $g^*$ is the Fenchel conjugate of $g$. 
Although many problems can be directly solved using \eqref{eq:primal}, the saddle-point formulation \eqref{eq:bilinearSaddlePointProblem} has favorable properties that allow for efficient or parallel solution, e.g., in case of finite-sum  \citep{wangExploitingStrongConvexity2017} or sparsity \citep{leiDoublyGreedyPrimalDual2017} structure. For this reason, problem \eqref{eq:bilinearSaddlePointProblem} is widely employed, including in empirical risk minimization  \citep{zhangStochasticPrimalDualCoordinate2015, shalev-shwartzAcceleratedProximalStochastic2013}, unsupervised learning \citep{xuMaximumMarginClustering2004}, reinforcement learning \citep{duStochasticVarianceReduction2017}, robust optimization \citep{ben-talRobustOptimization2009}, inverse imaging tasks \citep{chambolleFirstOrderPrimalDualAlgorithm2011}, 
extensive-form games \citep{farinaCorrelationExtensiveFormGames2019} 
and compressed sensing \citep{bachConvexSparseMatrix2008, fanPrimalDualActive2014}.  

Here, we aim at solving \eqref{eq:bilinearSaddlePointProblem}
with linear iteration complexity, under suitable assumptions. More specifically, we are interested in algorithms that are \emph{contractive}. Contractivity is a highly desirable property, that not only implies Q-linear convergence, but also ensures strong robustness properties, with respect to  parameters and data, for instance in case of inexact updates (see \Cref{sec:preliminaries} below or Chapter~3 of \cite{FB-CTDS}). 
To this goal, we will consider either of the three following alternative conditions:
\begin{itemize} 
	\item[\Cone.] $f$ is $\mf$-strongly convex, $g$ is $\mg$-strongly convex, $\mf>0$, $\mg>0$.
	\item[\Ctwo.] $g$ is $\mg$-strongly convex and $\Lg$-smooth,  $\mg>0$, $\Lg>0$;  $\mA \coloneqq \lambdamin{A^\top A}>0$.
	\item[\Cthree.] $f$ is $\Lf$-smooth, $g$ is $\Lg$-smooth,  $\Lf\geq 0$, $\Lg \geq 0$; $n =m$ and $\mA \coloneqq  \lambdamin{A^\top A} = \lambdamin{A A^\top } >0$. 
\end{itemize}

 Each of these conditions ensures the strong convexity of the primal problem \eqref{eq:primal}, and also the existence of a unique solution $(x^\star,y^\star)$ to problem \eqref{eq:bilinearSaddlePointProblem} (see a proof in \Cref{sec:uniquenessofprimaldual}). Furthermore, each condition is tight, in the sense that uniqueness of a primal-dual solution to \eqref{eq:bilinearSaddlePointProblem} cannot be guaranteed if any of the subconditions is relaxed---in which case, contractivity is excluded. Conditions \Cone{}, \Ctwo{} and \Cthree{} are relevant for a variety of applications, as exemplified next.

    \paragraph{Imaging.}  
    A common refinement of the Rudin, Osher and Fatemi (ROF) model for image recovery, particularly useful in avoiding the staircasing effect, is the total variation Huber--ROF  model \citep{Nikolova2005,Heise2013PMHuber}. It is   given by $\min_{x \in \R^n } \ \ell^{\textrm{hub}}_\alpha (\Delta x) + \lambda \norm{x - \hat{x}}^2 $,
    	where $ \hat{x} \in \R^n $ is the (vectorized) noisy image, $ \lambda > 0 $ weighs the trade-off between fitting and regularization, $\Delta  \in \R^{m \times n} $ is the linear difference operator between adjacent pixels, and \( \ell^{\textrm{hub}}_\alpha\) is the Huber  regularization with parameter $\alpha>0$.  Its primal-dual reformulation is \citep[Eq.~71]{chambolleFirstOrderPrimalDualAlgorithm2011}
    	  \begin{equation}\label{eq:HuberPrimalDual}
    		\min_{x \in \R^n} \max_{y \in \R^m}  \ \frac{\lambda}{2}\|x-\hat x\|^2  - y^\top  \Delta x - \frac{\alpha}{2}\|y\|^2 -  \iota_P(y),
    	\end{equation} 
    where $\iota_P$ is the indicator function of the polar ball. Primal-dual proximal-based methods have been shown to be extremely efficient for the Huber-ROF model,
    thanks to the existence of closed-form expressions for the involved proximal operators.  Clearly, problem \eqref{eq:HuberPrimalDual} satisfies \Cone{}.

    \paragraph{Affinely constrained optimization.} 
    The constrained  convex optimization problem 
            \begin{equation}
                \label{eq:minimizationAffineConstraints}
                \min_{A^\top y  \geq  b} g(y) =  - \max_{A^\top y  \geq  b} -g(y),
            \end{equation} 
            arises in a variety of applications,  including support vector machines \citep{scholkopf2002learning}, constrained regression \citep{monfort1982likelihood}, model predictive control \citep{rawlings2017model}.
             Its  saddle-point formulation is obtained by setting $f(x) = -x^\top  b + \iota_{\R^m_{\geq 0}}(x)$ in \eqref{eq:bilinearSaddlePointProblem} (note that $f$ is non-smooth, as it is constrained on the non-negative orthant).  Often, state-of-the-art algorithms solve \eqref{eq:bilinearSaddlePointProblem} instead of \eqref{eq:minimizationAffineConstraints}, since the primal-dual formulation allows for efficient distributed computation \citep{kovalevOptimalPracticalAlgorithms2020} and  avoids projecting onto $A^\top y  \geq b$ at each iteration \citep{salimOptimalAlgorithmStrongly2022}. Whenever $g$ is smooth and strongly convex and $A^\top$ is full-row rank,
			\Ctwo{} is satisfied. This also holds for the case of equality constraints $A^\top y = b$ (e.g., in network flow problems \citep{Jiang2021}).
    \color{black}    
    
    \paragraph{Reinforcement Learning.}
         Policy evaluation is a  key task in reinforcement learning. It consists in approximating the value function  $V^{\pi}$, namely the expected cumulative reward achieved by a given policy \( \pi \), 
    \begin{equation*}
        V^{\pi}(s) = \mathbb{E} \Big [ \sum\nolimits_{t=0}^{\infty} \gamma^t R(s_t, a_t) \vert s_0 = s, a_t \sim  \pi(s_t) \Big ], 
    \end{equation*}
    where $\gamma \in (0, 1)$ is a discount factor and $ R(s_t, a_t)$ is the reward for taking action $a_t$ in state $s_t$. A common approach for policy evaluation is to use a linear function approximation  $V^{\pi}(s) \approx \phi(s)^\top x$, where $\phi$ is a feature map of the state and the weights $x\in\R^n$ can be estimated  by minimizing the empirical mean squared projected Bellman error, for a dataset of length $T$ \citep{duStochasticVarianceReduction2017}:
    \begin{equation*}
        \min_x \ \norm{Ax-b}_{C^{-1}}^2,
    \end{equation*}
    with $C = \sum_{t=0} ^T  \phi(s_t) \phi(s_t)^\top $, $b =  \sum_{t=0} ^T  r_t \phi(s_t)$, $A = C - \gamma \sum_{t=0} ^T \phi(s_t) \phi(s_{t+1})^\top$. Using gradient methods to solve this problem would require inverting $C$. The corresponding primal-dual formulation 
    \begin{equation}
        \label{eq:RLPrimalDual}
        \min_x\max_y \  - y^\top Ax - \frac{1}{2}\norm{y}_C^2-y^\top b,
    \end{equation}
    is often preferred, as computing gradients does not require matrix inversion and thanks to its finite-sum structure \citep{duStochasticVarianceReduction2017}. Problem \eqref{eq:RLPrimalDual} is in the form \eqref{eq:bilinearSaddlePointProblem}, and it satisfies \Cthree{}  if $A$ is invertible.


 \begin{table*}\small 
    	\caption{The alternative assumptions for linear convergence \Cone{}, \Ctwo{}, \Cthree{}, in the literature. }
    	\label{tab:mainCases}
    	\centering \vspace{1em}
    	\begin{tabular}{ c | c | c | c}
    		\textbf{Asm.} & Extra asm. & References & Convergence type \\ 
    		\hline
            \multirow{6}{*}{\Cone{}}  & & 
            \begin{tabular}{c} \cite{chambolleFirstOrderPrimalDualAlgorithm2011,chambolle2016ergodic};  \cite{Mokhtari2020};
            \\
            \cite{JiangCaiHan_2023}
            \end{tabular} & R-linear
            \\ 
            \cline{3-4}
            & &  \begin{tabular}{c} \cite{Bredies_SIAM_2022,Balamurugan2016};
            \\ 
            \cite{OConnor2020}
            \end{tabular} & contractivity
            \\ \cline{2-4}
             & \multirow{2}{*}{ $f,g$ smooth} & 
            \begin{tabular}{c} \cite{Arrow1958};\cite{Wang2020Nips}; \cite{kovalevAcceleratedPrimalDualGradient2022}; \\ \cite{Korpelevich1976,Thekumparampil2022}
            \end{tabular} 
            & Q/R-linear
            \\ 
            \cline{3-4}
            & & \cite{chenConvergenceRatesForward1997,bauschkeConvexAnalysisMonotone2017} & contractivity
            \\ \Cline{2-4}{0.5pt}
            & \cellcolor{blue!25} & \cellcolor{blue!25} this paper &  \cellcolor{blue!25} contractivity
            \\ \hline 
            \multirow{6}{*}{\Ctwo{}}  & & 
            \begin{tabular}{c} \cite{wangExploitingStrongConvexity2017,Sadiev2022Neurips} \end{tabular} & Q/R-linear 
            \\ \cline{2-4}
             &  $f $ smooth & 
            \begin{tabular}{c} \cite{duLinearConvergencePrimalDual2019,kovalevAcceleratedPrimalDualGradient2022,Zhang2022MLR};
            \\ \cite{Qureshi2023}
            \end{tabular} 
            & R-linear 
            \\ \cline{2-4}
            &  $f=0$ ({\tiny optimization}) & 
            \begin{tabular}{c} \cite{Qu2019,Alghunaim2020};\\ \cite{Salim2022}     
            \end{tabular}
            & Q-linear 
            \\ 
             \cline{2-4}
            & 
            $f=0$ ({\tiny optimization}) & \cite{Velarde_TAC_2022,su2019contractionanalysisprimaldualgradient} & contractivity
            \\ \Cline{2-4}{0.5pt}
            
            & \cellcolor{blue!25} & \cellcolor{blue!25} this paper &  \cellcolor{blue!25} contractivity
            \\ \hline 
            \multirow{4}{*}{\Cthree{}}  & & 
            \begin{tabular}{c} \cite{kovalevAcceleratedPrimalDualGradient2022}\\ \end{tabular}   & R-linear 
            \\ \cline{2-4}
             &  $f,g$ affine or zero & 
            \begin{tabular}{c} \cite{daskalakis2018training,Mokhtari2020};  \\ \cite{Liang2019,Gidel2019}
            \end{tabular} 
            & Q/R-linear 
            \\ \Cline{2-4}{0.5pt}
            & \cellcolor{blue!25} & \cellcolor{blue!25} this paper &  \cellcolor{blue!25} contractivity
            \\ \hline 
    	\end{tabular}
    \end{table*}

    \subsection{Related work}
    A large body of research focuses on showing linear convergence of first-order primal-dual algorithms for solving \eqref{eq:bilinearSaddlePointProblem}, under various assumptions \citep{Zamani_2024,JiangUnifies2022,Zhang2022MLR,Jiang2023}, including \Cone{}, \Ctwo{} and \Cthree{}, see \Cref{tab:mainCases}. For instance, Condition \Cone{} is considered in their seminal paper by  \cite{chambolleFirstOrderPrimalDualAlgorithm2011} and in a number of extensions \citep{Lorenz2023}. The Chambolle--Pock algorithm is also studied under \Ctwo{} by \cite{wangExploitingStrongConvexity2017}. Most literature focuses instead on methods that do not rely on proximal mappings, where both $f$ and $g$ are required to be smooth \citep{Korpelevich1976,duLinearConvergencePrimalDual2019}: \cite{duLinearConvergencePrimalDual2019} first prove linear convergence of the primal-dual gradient method under \Ctwo{} and smoothness of $f$;  \Cthree{} is first exploited for the accelerated method of  \cite{kovalevAcceleratedPrimalDualGradient2022}. 
    In essence, most existing analysis methods are customized to particular problems. For example, a typical strategy to prove linear convergence is to find a merit function that decreases linearly along the algorithm iterates \citep{chambolleFirstOrderPrimalDualAlgorithm2011,wangExploitingStrongConvexity2017, kovalevAcceleratedPrimalDualGradient2022}; yet,  finding such a function is not easy, as the choice heavily depends on problem assumptions and algorithm. This approach makes a systematic treatment difficult and provides little insight for the design of new methods. Further, few works focus on contractivity properties, and those that do are limited to Condition C1 or to the special case of primal-dual methods for constrained optimization. For the general case, we are not aware of any contractivity result for problem~\eqref{eq:bilinearSaddlePointProblem} under \Ctwo{} or \Cthree{}.

\subsection{Contributions} 
  In this paper, we take a fresh look at linear convergence of first-order primal-dual algorithms through the lens of operator theory. We start by casting several algorithms as preconditioned forward-backward methods. This unified perspective provides a structured way to prove contractivity, and hence linear convergence to the solution of problem \eqref{eq:bilinearSaddlePointProblem}.  Our contributions are the following:
    \begin{itemize}[leftmargin=*]
        \item By leveraging tools from monotone operator theory, we propose a systematic and interpretable approach to derive and establish contractivity of first-order  primal-dual algorithms, both proximal and gradient-based, and under any of the conditions \Cone{}, \Ctwo{} or \Cthree{}. 
        
        \item  In our analysis, we put forward the notion of inverse Lipschitz operator, which to the best of our knowledge has never been considered in saddle-point problems. This allows us, among others,  to prove the first linear rate for the Chambolle--Pock algorithm under \Cthree{}.
        
        \item We improve on existing convergence results, by showing both stronger notions of linear convergence (i.e., contractivity rather than Q- or R-linear) and sharper rates. For instance, we improve on the known rate for the (vanilla) Chambolle--Pock algorithm under  \Cone{}, and we provide the first contractivity results under \Ctwo{} or \Cthree{}.
    \end{itemize}

    The remainder of this paper is organized as follows. In \Cref{sec:preliminaries} we introduce some necessary notations and   operator theory basics. In \Cref{sec:mainIdea} we  present our operator-theoretic approach. \Cref{sec:linearConvergence} contains our main  contractivity results for primal-dual algorithms under  \Cone{}, \Ctwo, and \Cthree{}. We close the paper in \Cref{sec:conclusion} with some extensions and remarks for future work.

\subsection{Preliminaries}

    \label{sec:preliminaries}
     \paragraph{Notation.} For a symmetric and positive semidefinite matrix $\Phi \in \R^{q \times q} \succcurlyeq 0$,  $\innerproduct{\omega}{u}_{\Phi} = u^\top \Phi \omega$   and   $\norm{u}_{\Phi}^2 = \langle u, u \rangle_\Phi$; we omit the subscripts if $\Phi = I$.  The minimum and maximum eigenvalues of a symmetric matrix $C$ are denoted by $\lambdamin{C}$ and $\lambdamax{C}$.   
    We use the convention $\frac{1}{0} = +\infty$.
    A scalar non-negative sequence $(v^k)_{k\in\mathbb N}$ converges  Q-linearly  (to zero)  if   $ v^{k+1} \leq \rho v^k $ for some   $0< \rho < 1$  and all  $k \in \mathbb N$;  R-linearly if   $ v^{k} \leq M  \rho^ k $ for some $0< \rho < 1$,  $M>0$, and for all $k \in \mathbb N$.  

  \paragraph{Contractivity.} We recall the concept of contractivity, which is central to our work. 

    \begin{definition}[Contractivity]
    Let $\Phi \succ 0$. The operator $\mathcal{A}:\R^q\rightarrow \R^q$ is contractive in $\|\cdot\|_\Phi $ with parameter (or rate) $0\leq \rho <1$ if, for all $\omega, \omega' \in \R^q$,
    \begin{align}
        \| \mathcal{A}(\omega)- \mathcal{A}(\omega') \|_\Phi \leq \rho \|\omega - \omega'\|_\Phi,
    \end{align}
    namely, if $\mathcal{A}$ is $\rho$-Lipschitz in $\|\cdot\|_\Phi$. We also say that the iteration  $\omega^{k+1} = \mathcal{A}(\omega^k)$ is contractive if $\mathcal{A}$ is. 
    \end{definition}
    
    A contractive iteration has a unique fixed point $\omega^\star = \mathcal{A}(\omega^\star)$ \cite[Th.~6]{Banach1922}. Furthermore, it is easily seen that contractivity implies Q-linear (hence, R-linear) convergence with rate $\rho$ of the sequence $(\|\omega^k - \omega^\star\|_\Phi)_{k\in\mathbb{N}}$, but not vice versa.   Compared to Q-linear convergent iterations, contractive algorithms enjoy superior numerical stability and robustness (e.g., in case of perturbed updates),  tracking  properties (e.g., for problems with streaming data), and modularity (the composition of contractive operators is contractive). 
    We illustrate in details the  critical distinction between contractivity and Q-linear convergence in \Cref{sec:Propertiesofcontractivity}.

    \paragraph{Operator theory.} \citep{bauschkeConvexAnalysisMonotone2017}
    A (set-valued)  operator $\mathcal{A}:\R^q\rightrightarrows \R^q$ is characterized by its graph
    $\gra (\mathcal{A})\coloneqq \{(\omega,u) \mid u\in \mathcal{A}(\omega)\}$. The inverse operator ${\mathcal{A}}^{-1}:\R^q\rightrightarrows \R^q$  is always well-defined  via $\gra ({\mathcal{A}}^{-1})=\{(u,\omega)\mid (\omega,u)\in \gra({\mathcal{A}})\}$.  
    The operator ${\mathcal{A}}$ is $\mu$-strongly monotone  if there is $\mu >0$ such that, for all $(\omega,u),(\omega',u') \in\gra(\mathcal{A})$,
    \begin{align}
        \langle u-u' , \omega-\omega'\rangle \geq  \mu\|\omega-\omega'\|^{2},
    \end{align}
    and monotone if this holds with $\mu =0$.
     Further, $\mathcal{A}$ is maximally monotone if it is monotone and there is no other monotone operator $\bar{{\mathcal{A}}}$ such that  $\gra(\mathcal{A})\subset \gra(\bar {\mathcal A})$.
    The identity operator is $\Id:\omega\mapsto \omega$.
    For an extended-value function $\psi: \R^q \rightarrow \overline{\R} \coloneqq \R \cup \{\infty\}$,  
    $\psi^*(y) = \sup \{ \langle y,x \rangle -\psi(x): x \in \R^q \}$ is its Fenchel conjugate;  and
    $\partial \psi: \dom(\psi) \rightrightarrows \R^q$ is its convex subdifferential,
    \begin{align}
        \partial \psi(x) = \{ v \in \R^q \mid \psi(z) \geq \psi(x) + \langle v , z-x \rangle , \forall  z  \}. \hspace{-0.5em}
    \end{align}
    Let $\psi$ be proper, closed, convex. Then $\partial \psi$ is maximally monotone. Furthermore, $\psi$ is $L$-smooth if and only if $\partial \psi$  is  $L$-Lipschitz continuous (in which case,  $\partial \psi$ is single-valued and $\partial \psi = \nabla \psi$) and $\psi$ is $\mu$-strongly-convex if and only if $\partial \psi$ is $\mu$-strongly monotone.
     The proximal operator of $\psi$ is  $\operatorname{prox}_\psi \coloneqq   (\Id+\partial \psi)^{-1} $, and  
    \begin{align}\label{eq:prox}
    \operatorname{prox}_\psi(\omega') = \operatorname{argmin}_{\omega \in \R^q}  \{ \psi(\omega) +  \textstyle \frac{1}{2} \norm{\omega' - \omega}^2 \}.
    \end{align} 
     
     \begin{definition}[Inverse Lipschitz]\label{def:InverseLipschitz}
        The operator $\mathcal{A} : \R^q  \rightrightarrows \R^q$ is   $\frac{1}{L}$-inverse Lipschitz  if  $\norm{u-u'} \geq \frac{1}{L}\norm{\omega-\omega'}$  for all $(\omega,u), (\omega',u') \in \gra(\mathcal{A})$, namely if $\mathcal{A}^{-1}$ is $L$-Lipschitz.
     \end{definition}
     
      The inverse Lipschitz property was recently used by \cite{Ryu2022SRG} and \cite{Gadjov2023Heavy};  earlier, a local version was introduced by \cite{Rockafellar1976}, which is related to many calmness conditions in the literature (e.g., see \cite{JiangUnifies2022}). An inverse Lipschitz operator has at most one zero, and a strongly monotone operator is inverse Lipschitz (as shown in \Cref{sec:PropertiesOfInverseLipschitz}).


\section{UNIFIED ANALYSIS STRATEGY}
    \label{sec:mainIdea}
    
    \paragraph{Algorithm derivation.}
    By the first order optimality conditions, problem \eqref{eq:bilinearSaddlePointProblem} is equivalent to finding a zero of the  \emph{saddle-point operator}  $F:\R^{n+m} \rightrightarrows \R^{n+m}$,
       \begin{equation}
   	F(\omega) = F(x,y) \coloneqq \begin{bmatrix}
   		\partial f(x) + A^\top y\\
   		\partial g(y) - Ax
   	\end{bmatrix}
   	\label{eq:saddlePointOperator},
   \end{equation}
    i.e.,  finding a vector $\omega^\star =(x^\star,y^\star)$ such that $\0 \in F(\omega^\star)$. In turn,  one way to approach this problem is to employ a preconditioned  \emph{forward-backward splitting} \citep[Th.~26.14]{bauschkeConvexAnalysisMonotone2017}, namely the iteration
    \begin{equation}
        \label{eq:precFB}
 	\omega^{k+1} =   (\Id + \Phi^{-1}F_\textnormal{b})^{-1} \circ (\Id-\Phi^{-1} F_\textnormal{f}) (\omega^k)  
     \end{equation}  where $F$ is split as $F = F_\textnormal{f}+F_\textnormal{b}$,  $F_\textnormal{f}$ must be single-valued, $\Phi \in \R^{(n+m) \times (n+m)}$ is a  positive definite symmetric \emph{preconditioning} matrix to be designed, and 
     \begin{align}\label{eq:FBsteps}
         \mathcal{F} \coloneqq(\Id-\Phi^{-1} F_\textnormal{f}), \quad  \mathcal{B} \coloneqq(\Id + \Phi^{-1} F_\textnormal{b})^{-1}, 
     \end{align} are the forward step and implicit\footnote{Computing $v =  (\Id - \Phi^{-1} \Fb)^{-1} (u)$ means solving for $v$ the regularized inclusion $\0 \in  \Phi (v-u) + F_\textnormal{b}(v)$. Note that $\mathcal{B}$ is single-valued in all our results, see \Cref{sec:algorithmsderivation}.}  backward step, respectively. The fixed points of the iteration \eqref{eq:precFB} are the vectors $\omega^\star$ that solve $\0 \in \Phi^{-1} F(\omega^\star)$,  equivalently $\0 \in F(\omega^\star)$, equivalently the solutions of \eqref{eq:bilinearSaddlePointProblem}. For instance, if $f$ and $g$ are differentiable and we choose $F_\textnormal{f}  = F$ (hence, $F_\textnormal{b} = \0$ is the zero operator) and $\Phi = \operatorname{diag}\left(\frac{1}{\tau} I_n, \frac{1}{\sigma} I_m\right)$ for some $\tau,\sigma >0$, we obtain the iteration 
    \begin{subequations}
        \label{eq:Gradientdescentascent} 
        \begin{align} 
            x^{k+1} & = x^k - \tau (\nabla f (x^k) +A^\top y^k)\\
         y^{k+1} & = y^k - \sigma (\nabla g (y^k) - A x^k),
        \end{align}
    \end{subequations}
     namely the \gls{PDG} method. This algorithm is not guaranteed to converge without strong convexity assumptions (for instance, it does not converge if $f=g=0$ and $A=1$, which satisfies \Cthree{}). To derive iterations that converge without extra assumptions and converge linearly if \Cone{}, \Ctwo{}, or \Cthree{} hold, we instead place the skew symmetric linear operator $(A^\top y,-Ax)$ in the backward step. To obtain inexpensive updates and facilitate the computation of  $\mathcal{B}$, we choose the preconditioning matrix as  
      \begin{equation}
            \label{eq:matrixPhiEta} 
     	\Phi_{\tau, \sigma} = \begin{bmatrix}
     		\frac{1}{\tau}I_n &-A^\top \\
     		-A & \frac{1}{\sigma}I_m
     	\end{bmatrix},
     \end{equation}
    where we highlight the dependence on the step sizes $\tau$ and $\sigma$ via the subscript. Note that  $\Phi_{\tau, \sigma} \succ 0$ if $\tau \sigma \|A\|^2 <1$. \newline
    For different splittings, i.e.,  choices of $F_\textnormal{b}$ and $F_\textnormal{f}$, we obtain three different algorithms, whose complete derivation we defer to \Cref{sec:algorithmsderivation}. 
    The first algorithm is the Chambolle--Pock method 
       \begin{subequations}
        \label{eq:ChambollePock1} 
        \begin{align} 
            x^{k+1} & = \operatorname{prox}_{\tau f} \left(x^k - \tau A^\top y^k \right)
            \\
         y^{k+1} & = \operatorname{prox}_{\sigma g} \left( y^k +\sigma A (2x^{k+1}-x^k) \right),
        \end{align}
    \end{subequations}
    which requires that  the proximal operators of both $f$ and $g$ are available in closed form. The second is 
     the semi-implicit method
       \begin{subequations}
        \label{eq:semiimplicit1} 
        \begin{align} 
            x^{k+1} & = \operatorname{prox}_{\tau f} \left(x^k - \tau A^\top y^k \right)
            \\
         y^{k+1} & =   y^k -\sigma \left (\nabla g(y^k) - A (2x^{k+1}-x^k) \right),
        \end{align}
    \end{subequations}
   which requires $g$ to be smooth, but only  $f$ to be prox-friendly. The third is the fully explicit method
       \begin{subequations}
        \label{eq:explicit1} 
        \begin{align} 
            x^{k+1} & = x^k -\tau  \left( \nabla f(x^k) + \tau A^\top y^k \right)
            \\
         y^{k+1} & =   y^k -\sigma \left (\nabla g(y^k) - A (2x^{k+1}-x^k) \right).
        \end{align}
    \end{subequations}
where both $f$ and $g$ are only required to be smooth.  

    \paragraph{Main idea.} The derivation technique just sketched is itself not novel \citep{Condat_2013,Vu2013,Combettes2014_ICIP,Condat2023}. For instance, it has been known since \cite{He2012SIAM} that the Chambolle--Pock algorithm can be interpreted as a preconditioned forward-backward method. 
   Instead, our contribution is a novel approach to  exploit the operator-theoretic formulation \eqref{eq:precFB}  for linear convergence arguments.  In particular, our strategy is 
    to prove contractivity properties for the forward and backward steps. Concerning the latter, we will use the notion of inverse Lipschitz operators to deal with the lack of strong convexity, as exemplified in the following proposition, proven in Appendix~\ref{sec:proofPropertiesOfResolvent}.
    \begin{proposition}
	\label{prop:PropertiesOfResolvent}
    Let $F:\R^n \rightrightarrows \R^n$ be maximally monotone and $\frac{1}{L}$ inverse Lipschitz. Then the resolvent operator $\J_F \coloneqq (\Id+F)^{-1}$ is  $\frac{L}{\sqrt{L^2+1}}$-contractive.
\end{proposition}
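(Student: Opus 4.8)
The plan is to work directly with the characterization of the resolvent graph and turn the two hypotheses on $F$ into a single quantitative estimate for $\J_F$. Take arbitrary points $(\omega,u),(\omega',u')\in\gra(\J_F)$. By definition of the resolvent, this means $p\coloneqq \omega - u \in F(u)$ and $p'\coloneqq \omega'-u'\in F(u')$; equivalently $(u,p),(u',p')\in\gra(F)$ and $\omega = u+p$, $\omega' = u'+p'$. So it suffices to bound $\norm{u-u'}$ in terms of $\norm{(u+p)-(u'+p')}$.

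First I would expand the squared norm on the input side:
\begin{align}
\norm{\omega-\omega'}^2 &= \norm{(u-u') + (p-p')}^2 \notag\\
&= \norm{u-u'}^2 + 2\innerproduct{u-u'}{p-p'} + \norm{p-p'}^2. \notag
\end{align}
Now I invoke the two structural properties of $F$. Monotonicity gives $\innerproduct{u-u'}{p-p'}\geq 0$, so the cross term is nonnegative and can be kept. The $\tfrac1L$-inverse Lipschitz property (\Cref{def:InverseLipschitz}) gives $\norm{p-p'}\geq \tfrac1L\norm{u-u'}$, i.e. $\norm{p-p'}^2 \geq \tfrac{1}{L^2}\norm{u-u'}^2$. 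Substituting both bounds,
\begin{align}
\norm{\omega-\omega'}^2 &\geq \norm{u-u'}^2 + \tfrac{1}{L^2}\norm{u-u'}^2 = \tfrac{L^2+1}{L^2}\norm{u-u'}^2, \notag
\end{align}
which rearranges to $\norm{u-u'}\leq \tfrac{L}{\sqrt{L^2+1}}\norm{\omega-\omega'}$. Since $(\omega,u),(\omega',u')$ were arbitrary points of the graph of $\J_F$, and since maximal monotonicity of $F$ ensures $\J_F$ is single-valued with full domain (by Minty's theorem, cf.\ \cite[Prop.~23.8]{bauschkeConvexAnalysisMonotone2017}), this is exactly the claimed contractivity with rate $L/\sqrt{L^2+1}<1$.

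The argument is short and I do not anticipate a serious obstacle; the only point deserving care is making sure $\J_F$ is genuinely a (single-valued, everywhere-defined) operator so that the contractivity statement is not vacuous — this is where maximal monotonicity, rather than mere monotonicity, is used, via Minty's surjectivity theorem. One could alternatively phrase the whole computation through the resolvent identity $\J_F + \J_F\circ(\text{something})$, but the graph-level expansion above is the cleanest route and keeps the roles of monotonicity (cross term $\geq 0$) and inverse Lipschitzness (the $1/L^2$ gain) transparent. It is worth noting in passing that if $F$ were additionally $\mu$-strongly monotone one could also retain part of the cross term to sharpen the constant, but that is not needed here.
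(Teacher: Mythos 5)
Your proof is correct and follows essentially the same argument as the paper: expand $\norm{\omega-\omega'}^2$ over the graph of the resolvent, drop the nonnegative cross term by monotonicity, and lower-bound $\norm{p-p'}^2$ by $\tfrac{1}{L^2}\norm{u-u'}^2$ via the inverse Lipschitz property. The remark on Minty's theorem for single-valuedness and full domain matches the paper's treatment as well.
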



%

In fact, one of the major technical novelties of this paper is to show that \Ctwo{} and \Cthree{}  induce certain useful inverse Lipschitz properties (see \Cref{lemma:IL1,lemma:IL2} below). 
    Nonetheless, \Cref{prop:PropertiesOfResolvent} is still insufficient for the problem at hand. First, the preconditioning introduces technical complications. Second, often in our setup inverse Lipschitz and strong convexity properties do not hold on the whole space $(x,y)$, but only with respect to a subset of the variables; hence, forward and backward steps are not independently contractive.
     To address both challenges, our enabling result is the following key proposition, which leverages the interplay between the forward and backward steps and  exploits ``partial contractivity'' properties in weighted norms. The proof is found in Appendix~\ref{app:propPartialContractivity}.

    \begin{proposition}\label{prop:partialContractivity}
        Let $\Phi_{\tau,\sigma} \succ 0$ as in \eqref{eq:matrixPhiEta} and recall the forward step $\mathcal{F}$ and backward step $\mathcal{B}$ in \eqref{eq:FBsteps}. Assume that there exist a scalar $0 < \gamma \leq  1$ and matrices $\Psi_{\textnormal b} \succcurlyeq 0 $, $\Psi _{\textnormal f}\succcurlyeq 0$, such that, for all $\omega, \omega' \in \R^{n+m}$,
        \begin{itemize}
            \item[\Aone{}.] $\|\mathcal{B}(\omega) - \mathcal{B}(\omega')\|^2_{\Phi_{\tau,\sigma}+\Psi_{\textnormal{b}}} 
            \leq  \| \omega -\omega'\|^2_{\Phi_{\tau,\sigma}}$
            \item[\Atwo{}.] $\|\mathcal{F}(\omega) - \mathcal{F}(\omega')\|^2_{\Phi_{\tau,\sigma}} 
            \leq \| \omega -\omega'\|^2_{\Phi_{\tau,\sigma}-\Psi_{\textnormal{f}}}$
        \end{itemize}
        and $ \Psi_{\textnormal b}+ \Psi _{\textnormal f} \succcurlyeq \gamma (\Phi_{{\tau,\sigma}}+\Psi_{\textnormal b}) $. 
        Then, the forward-backward iteration in \eqref{eq:precFB} is contractive in $\| \cdot\|_{\Phi_{{\tau,\sigma}}+\Psi_{\textnormal{b}}} $, with rate 
        $\rho =  \sqrt{1-\gamma}$. 
    \end{proposition}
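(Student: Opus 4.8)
The plan is to unfold the forward-backward iteration $\mathcal A = \mathcal B \circ \mathcal F$ and chain the two hypotheses \Aone{} and \Atwo{} in the right norms. Fix $\omega,\omega'$ and write $u = \mathcal F(\omega)$, $u' = \mathcal F(\omega')$. Then $\mathcal A(\omega) = \mathcal B(u)$, $\mathcal A(\omega') = \mathcal B(u')$, and by \Aone{} applied to $u,u'$,
\begin{equation*}
\|\mathcal A(\omega)-\mathcal A(\omega')\|^2_{\Phi_{\tau,\sigma}+\Psi_{\textnormal b}} = \|\mathcal B(u)-\mathcal B(u')\|^2_{\Phi_{\tau,\sigma}+\Psi_{\textnormal b}} \leq \|u-u'\|^2_{\Phi_{\tau,\sigma}}.
\end{equation*}
Now by \Atwo{} applied to $\omega,\omega'$, $\|u-u'\|^2_{\Phi_{\tau,\sigma}} \leq \|\omega-\omega'\|^2_{\Phi_{\tau,\sigma}-\Psi_{\textnormal f}} = \|\omega-\omega'\|^2_{\Phi_{\tau,\sigma}} - \|\omega-\omega'\|^2_{\Psi_{\textnormal f}}$. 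So combining,
\begin{equation*}
\|\mathcal A(\omega)-\mathcal A(\omega')\|^2_{\Phi_{\tau,\sigma}+\Psi_{\textnormal b}} \leq \|\omega-\omega'\|^2_{\Phi_{\tau,\sigma}} - \|\omega-\omega'\|^2_{\Psi_{\textnormal f}}.
\end{equation*}

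The next step is to convert the right-hand side, which is expressed in the ``plain'' norm $\Phi_{\tau,\sigma}$, into the ``contractivity'' norm $\Phi_{\tau,\sigma}+\Psi_{\textnormal b}$. Write $\|\omega-\omega'\|^2_{\Phi_{\tau,\sigma}} = \|\omega-\omega'\|^2_{\Phi_{\tau,\sigma}+\Psi_{\textnormal b}} - \|\omega-\omega'\|^2_{\Psi_{\textnormal b}}$, so the bound becomes
\begin{equation*}
\|\mathcal A(\omega)-\mathcal A(\omega')\|^2_{\Phi_{\tau,\sigma}+\Psi_{\textnormal b}} \leq \|\omega-\omega'\|^2_{\Phi_{\tau,\sigma}+\Psi_{\textnormal b}} - \|\omega-\omega'\|^2_{\Psi_{\textnormal b}+\Psi_{\textnormal f}}.
\end{equation*}
At this point I invoke the spectral assumption $\Psi_{\textnormal b}+\Psi_{\textnormal f} \succcurlyeq \gamma(\Phi_{\tau,\sigma}+\Psi_{\textnormal b})$, which gives $\|\omega-\omega'\|^2_{\Psi_{\textnormal b}+\Psi_{\textnormal f}} \geq \gamma\|\omega-\omega'\|^2_{\Phi_{\tau,\sigma}+\Psi_{\textnormal b}}$. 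Substituting yields
\begin{equation*}
\|\mathcal A(\omega)-\mathcal A(\omega')\|^2_{\Phi_{\tau,\sigma}+\Psi_{\textnormal b}} \leq (1-\gamma)\|\omega-\omega'\|^2_{\Phi_{\tau,\sigma}+\Psi_{\textnormal b}},
\end{equation*}
and taking square roots gives contractivity with rate $\rho = \sqrt{1-\gamma}$, where $\rho < 1$ since $\gamma > 0$ and $\rho \geq 0$ is well-defined since $\gamma \leq 1$ (so the argument under the root is nonnegative — this is exactly why $\gamma \leq 1$ is assumed). One should also note $\Phi_{\tau,\sigma}+\Psi_{\textnormal b} \succ 0$, so $\|\cdot\|_{\Phi_{\tau,\sigma}+\Psi_{\textnormal b}}$ is a genuine norm; this follows from $\Phi_{\tau,\sigma}\succ 0$ and $\Psi_{\textnormal b}\succcurlyeq 0$.

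The argument is essentially a bookkeeping exercise once the norms are lined up; there is no real analytical obstacle here, since all the hard work has been pushed into verifying \Aone{}, \Atwo{} and the spectral inequality for each concrete algorithm. The one place to be careful is the direction of the semidefinite inequalities: \Atwo{} subtracts $\Psi_{\textnormal f}$ on the \emph{right} (it is a stronger contraction statement for the forward step in a smaller norm), while \Aone{} adds $\Psi_{\textnormal b}$ on the \emph{left} (it measures the backward output in a larger norm), and these must be combined so that the $\Psi_{\textnormal b}$-deficit created when passing from $\|\cdot\|_{\Phi_{\tau,\sigma}}$ to $\|\cdot\|_{\Phi_{\tau,\sigma}+\Psi_{\textnormal b}}$ is exactly what gets absorbed together with $\Psi_{\textnormal f}$ by the final spectral bound. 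A minor subtlety: \Aone{} and \Atwo{} are quantified over all pairs in $\R^{n+m}$, so they apply to the intermediate pair $(u,u')$ and the original pair $(\omega,\omega')$ respectively without any domain compatibility issue, since $\mathcal F$ is single-valued with full domain.
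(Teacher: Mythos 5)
Your proof is correct and follows essentially the same route as the paper's: chain \Aone{} and \Atwo{} across the composition $\mathcal B\circ\mathcal F$, then absorb the $\Psi_{\textnormal f}$ and $\Psi_{\textnormal b}$ terms via the spectral inequality $\Psi_{\textnormal b}+\Psi_{\textnormal f}\succcurlyeq\gamma(\Phi_{\tau,\sigma}+\Psi_{\textnormal b})$; the paper only differs in stating the key step as a single matrix inequality $\Phi_{\tau,\sigma}-\Psi_{\textnormal f}\preccurlyeq(1-\gamma)(\Phi_{\tau,\sigma}+\Psi_{\textnormal b})$ rather than your additive norm decomposition, and in proving a slightly more general version with extra contraction factors $\rho_{\textnormal f},\rho_{\textnormal b}$ that reduces to this statement when both equal $1$.
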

    
    By directly exploiting the assumptions on the operators $F_\textnormal{f}$ and $F_\textnormal{b}$ (namely, conditions \Cone{}, \Ctwo{}, \Cthree{}), we can guarantee the partial contractivity conditions in \Aone{} and \Atwo{} for each of the methods \eqref{eq:ChambollePock1}, \eqref{eq:semiimplicit1}, \eqref{eq:explicit1}. In turn, Proposition~\ref{prop:partialContractivity} allows us to conclude the contractivity of our algorithms, as detailed in \Cref{sec:linearConvergence}, and  informally summarized next, for readability.   

    \begin{theorem}
    Under either \Cone{}, \Ctwo{}, or \Cthree{}, there exist stepsizes $\tau,\sigma>0$ such that:
    \begin{itemize}[leftmargin=*]
        \item algorithm~\eqref{eq:ChambollePock1} is contractive;
        \item if $g$ is smooth,  algorithm~\eqref{eq:semiimplicit1} is contractive;
        \item if $f$ and  $g$ are smooth, algorithm~\eqref{eq:explicit1} is contractive. 
    \end{itemize}
    \end{theorem}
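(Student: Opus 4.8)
The plan is to derive, for each of the three splittings corresponding to algorithms \eqref{eq:ChambollePock1}, \eqref{eq:semiimplicit1}, \eqref{eq:explicit1}, the decomposition $F = \Ff + \Fb$ together with explicit matrices $\Psi_{\textnormal b}$ and $\Psi_{\textnormal f}$ so that \Cref{prop:partialContractivity} applies. For all three algorithms the ``coupling'' part $(A^\top y, -Ax)$ is placed in $\Fb$ and the preconditioner is $\Phi_{\tau,\sigma}$ from \eqref{eq:matrixPhiEta}, so $\Phi_{\tau,\sigma} \succ 0$ requires $\tau\sigma\|A\|^2 < 1$ throughout. The three algorithms differ only in whether the diagonal blocks $\partial f$, $\partial g$ go into the forward step (as gradients) or into the backward step (as proximable terms): Chambolle--Pock puts both $\partial f,\partial g$ in $\Fb$; the semi-implicit method puts $\partial f$ in $\Fb$ and $\nabla g$ in $\Ff$; the explicit method puts both $\nabla f,\nabla g$ in $\Ff$. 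So the work splits into establishing \Aone{} (a contractivity estimate for the resolvent-type backward step) and \Atwo{} (a non-expansiveness-with-slack estimate for the forward step), and then certifying the matrix inequality $\Psi_{\textnormal b}+\Psi_{\textnormal f}\succcurlyeq \gamma(\Phi_{\tau,\sigma}+\Psi_{\textnormal b})$ for some $\gamma\in(0,1]$ and some admissible $\tau,\sigma$.

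For \Atwo{}, the forward step is $\mathcal F = \Id - \Phi_{\tau,\sigma}^{-1}\Ff$; when $\Ff$ contains the block-diagonal gradients (and possibly $0$), one expands $\|\mathcal F(\omega)-\mathcal F(\omega')\|^2_{\Phi_{\tau,\sigma}}$ using strong monotonicity of $\partial f$ / $\partial g$ (under \Cone) or $L$-Lipschitzness (smoothness under \Ctwo, \Cthree), producing a negative semidefinite correction $-\Psi_{\textnormal f}$; co-coercivity of gradients of convex smooth functions is the natural tool here. For \Aone{}, the backward step $\mathcal B = (\Id+\Phi_{\tau,\sigma}^{-1}\Fb)^{-1}$ is handled via \Cref{prop:PropertiesOfResolvent} and its weighted-norm analogue: strong monotonicity of $\partial f$ or $\partial g$ (Case \Cone) gives a genuine contraction factor and hence a positive definite gain $\Psi_{\textnormal b}$, whereas under \Ctwo{} or \Cthree{} strong convexity is absent in some variables and one invokes instead the inverse Lipschitz lemmas \Cref{lemma:IL1,lemma:IL2} announced in the text — this is exactly where the ``inverse Lipschitz'' machinery enters, converting the rank conditions $\mu_A = \lambdamin{A^\top A}>0$ into a usable lower bound $\|u-u'\|\geq \frac1L\|\omega-\omega'\|$ on $\gra(\Fb)$ restricted to the relevant subspace, and then Proposition~\ref{prop:PropertiesOfResolvent} (in weighted norm) yields $\Psi_{\textnormal b}$.

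The main obstacle, and the heart of the proof, is the final step: verifying $\Psi_{\textnormal b}+\Psi_{\textnormal f}\succcurlyeq\gamma(\Phi_{\tau,\sigma}+\Psi_{\textnormal b})$ while simultaneously keeping $\Phi_{\tau,\sigma}\succ 0$, i.e.\ $\tau\sigma\|A\|^2<1$. Because under \Ctwo{} and \Cthree{} the gains $\Psi_{\textnormal b},\Psi_{\textnormal f}$ are only positive definite on complementary subspaces of $\R^{n+m}$ (strong convexity governs one block, the rank condition on $A$ governs the other through the coupling), one must show their sum dominates $\gamma(\Phi_{\tau,\sigma}+\Psi_{\textnormal b})$ on the whole space; this is a block $2\times2$ matrix inequality whose feasibility has to be certified by an explicit, though tedious, choice of $\tau,\sigma$ (typically $\tau,\sigma$ scaling like $1/L$ and small enough relative to $\sqrt{\mu_A}/\|A\|^2$), using Schur complements or congruence transformations to reduce it to scalar inequalities in $\tau,\sigma,\mu_f,\mu_g,L_f,L_g,\mu_A,\|A\|$. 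I expect this to require a separate, careful computation for each of \Cone, \Ctwo, \Cthree{} and each of the three algorithms, but the structure is uniform: each reduces to exhibiting one feasible $(\tau,\sigma,\gamma)$ triple, after which \Cref{prop:partialContractivity} delivers contractivity with rate $\rho=\sqrt{1-\gamma}<1$, and the restriction to smooth $f$ (resp.\ $f,g$) for \eqref{eq:semiimplicit1} (resp.\ \eqref{eq:explicit1}) is needed precisely so that the corresponding blocks can legitimately be moved into the single-valued $\Ff$.
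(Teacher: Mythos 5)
Your proposal follows essentially the same route as the paper: the same three splittings with the skew part always in $F_\textnormal{b}$, condition \Aone{} via resolvent contractivity from strong monotonicity (\Cone{}) or the inverse-Lipschitz lemmas (\Ctwo{}, \Cthree{}) in the $\Phi_{\tau,\sigma}$-weighted norm, condition \Atwo{} via strong-monotonicity/cocoercivity estimates on the gradient step, and the final combination through Proposition~\ref{prop:partialContractivity}. The only mild discrepancy is one of emphasis: in the paper the closing inequality $\Psi_\textnormal{b}+\Psi_\textnormal{f}\succcurlyeq\gamma(\Phi_{\tau,\sigma}+\Psi_\textnormal{b})$ is nearly immediate once $\Psi_\textnormal{b}+\Psi_\textnormal{f}$ is a positive definite diagonal matrix (a single eigenvalue bound on $\Phi_{\tau,\sigma}+\Psi_\textnormal{b}$ suffices, no Schur complements needed there), and the real work sits in establishing the inverse-Lipschitz and partial-contractivity lemmas themselves.
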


    The proofs of our results can be found in \Cref{sec:omittedproofs}.

\section{MAIN RESULTS}
   \label{sec:linearConvergence}

    \subsection{Chambolle--Pock algorithm}
        \label{sec:chambollePock}
        The first method we consider is the Chambolle--Pock algorithm \citep{chambolleFirstOrderPrimalDualAlgorithm2011}  in \eqref{eq:ChambollePock1}, which is widely used in inverse imaging tasks. 
        The method is recalled in \Cref{alg:chambollePock} below, where we highlight the hyperparameter $\theta$ for the sake of later comparison with related work, though we always set  $\theta=1$ here. 
  
        \begin{algorithm}
        \small
            \caption{Chambolle--Pock method}
            \label{alg:chambollePock}
            \begin{algorithmic}[1]
                \Require  $x^0\in \R^n, y^0\in \R^m$, step sizes $\tau, \sigma >0$, $\theta = 1$
                \For{$k = 0, 1, 2,\dots$}
                    \State $x^{k+1} = \operatorname{prox}_{\tau f} \left (x^k - \tau A^\top y^k \right)$
                    \State $y^{k+1} = \operatorname{prox}_{\sigma g} \left (y^k + \sigma A \left ((1+\theta)x^{k+1}-\theta x^k \right) \right )$
                \EndFor
            \end{algorithmic}
        \end{algorithm}

     \Cref{alg:chambollePock} can be cast as the forward-backward iteration in \eqref{eq:precFB} by choosing $F_\textnormal{b} = F$, $F_\textnormal{f} = \0$ and $\Phi_{\tau, \sigma} $ as in \eqref{eq:matrixPhiEta}, see \Cref{sec:algderivation1}. 
    It is known that contractivity of  \Cref{alg:chambollePock} can be inferred by strong monotonicity of  $F$, when \Cone{} holds \citep{Bredies_SIAM_2022}. Here we provide a refined analysis, that results in improved rates. Further, for analysis under \Ctwo{} and \Cthree{}, we provide the following inverse Lipschitz properties, whose derivation is completely novel. The expressions for the constants $R_2$ and $R_3$ can be found in the appendix. 
    \begin{lemma} 
        \label{lemma:IL1}
        If \Ctwo{} holds, the operator $F$ in \eqref{eq:saddlePointOperator} is $\frac{1}{R_2}$-inverse Lipschitz, $R_2 >0$.
        \end{lemma}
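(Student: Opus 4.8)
The plan is to unpack the graph of $F$ and reduce the inverse Lipschitz estimate to a single scalar quadratic inequality. Fix $(\omega,u),(\omega',u')\in\gra(F)$ with $\omega=(x,y)$, $\omega'=(x',y')$, $u=(u_x,u_y)$, $u'=(u_x',u_y')$, and write $\delta x = x-x'$, $\delta y = y-y'$, $\delta u_x = u_x-u_x'$, $\delta u_y = u_y-u_y'$, $\delta\omega=\omega-\omega'$, $\delta u = u-u'$. Since $g$ is $\Lg$-smooth, $\partial g=\ng$ is single-valued and $\Lg$-Lipschitz, so $(\omega,u)\in\gra(F)$ unpacks to $u_x - A^\top y\in\partial f(x)$ together with $\ng(y) = u_y + Ax$. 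The target is an inequality $\norm{\delta\omega}\le R_2\norm{\delta u}$, i.e.\ $\norm{\delta u}\ge\frac{1}{R_2}\norm{\delta\omega}$.

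First I would obtain a bound on $\norm{\delta y}$. Monotonicity of $\partial f$ (only monotonicity — $f$ is merely proper closed convex under \Ctwo{}) gives $\innerproduct{\delta u_x - A^\top\delta y}{\delta x}\ge 0$, i.e.\ $\innerproduct{A\delta x}{\delta y}\le\innerproduct{\delta u_x}{\delta x}$; and $\mg$-strong monotonicity of $\ng$, combined with $\ng(y)-\ng(y') = \delta u_y + A\delta x$, gives $\innerproduct{\delta u_y + A\delta x}{\delta y}\ge\mg\norm{\delta y}^2$. Adding these two inequalities the bilinear term $\innerproduct{A\delta x}{\delta y}$ cancels, leaving $\innerproduct{\delta u_x}{\delta x} + \innerproduct{\delta u_y}{\delta y}\ge\mg\norm{\delta y}^2$; two applications of Cauchy--Schwarz bound the left-hand side by $\norm{\delta u}\,\norm{\delta\omega}$, so
\[
    \mg\norm{\delta y}^2 \le \norm{\delta u}\,\norm{\delta\omega}.
\]
Next I would control $\norm{\delta x}$: writing $A\delta x = \ng(y)-\ng(y')-\delta u_y$, the $\Lg$-Lipschitzness of $\ng$ and the spectral bound $\norm{Az}^2 = z^\top A^\top A z\ge\mA\norm{z}^2$ — which is precisely where the rank condition $\mA=\lambdamin{A^\top A}>0$ enters — give $\sqrt{\mA}\,\norm{\delta x}\le\Lg\norm{\delta y} + \norm{\delta u_y}\le\Lg\norm{\delta y}+\norm{\delta u}$, hence $\norm{\delta x}^2\le\frac{2}{\mA}\bigl(\Lg^2\norm{\delta y}^2 + \norm{\delta u}^2\bigr)$.

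Putting the pieces together, $\norm{\delta\omega}^2 = \norm{\delta x}^2 + \norm{\delta y}^2 \le \kappa\norm{\delta y}^2 + \frac{2}{\mA}\norm{\delta u}^2$ with $\kappa := 1 + \frac{2\Lg^2}{\mA}$; substituting the first bound turns this into $t^2\le\frac{\kappa}{\mg}\norm{\delta u}\,t + \frac{2}{\mA}\norm{\delta u}^2$ for $t := \norm{\delta\omega}$, and solving this quadratic yields $t\le R_2\norm{\delta u}$ with
\[
    R_2 = \tfrac12\Bigl(\tfrac{\kappa}{\mg} + \sqrt{\tfrac{\kappa^2}{\mg^2} + \tfrac{8}{\mA}}\,\Bigr) > 0,
\]
i.e.\ the claimed inverse Lipschitz constant (the appendix may arrange it in a slightly different but equivalent form). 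The main obstacle is the algebraic trick in the second step — recognizing that adding the $\partial f$-monotonicity and $\ng$-strong-monotonicity inequalities annihilates the coupling term $\innerproduct{A\delta x}{\delta y}$; everything after that is Cauchy--Schwarz, the spectral lower bound on $A$, and solving a quadratic. A minor point requiring care is that $\partial f$ may be set-valued and is only monotone, so one must avoid any estimate that would implicitly use strong convexity or smoothness of $f$.
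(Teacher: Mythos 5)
Your proof is correct, but it takes a genuinely different route from the paper's. The paper tests $F(\omega)-F(\omega')$ against a \emph{skewed} increment $\Psi_\alpha(\omega-\omega')$ with $\Psi_\alpha=\begin{bsmallmatrix} I & 0\\ -\alpha A & I\end{bsmallmatrix}$: the extra $-\alpha A$ block generates, after cancellation of the bilinear coupling, the terms $-\alpha\innerproduct{\ng(y)-\ng(y')}{A(x-x')}+\alpha\norm{A(x-x')}^2$, and positive definiteness of a resulting $2\times 2$ quadratic form $M_\alpha$ in $(\norm{x-x'},\norm{y-y'})$ (checked via Sylvester, for $\alpha$ small enough) delivers $R_2=\frac{1+\alpha\norm{A}}{\lambdamin{M_\alpha}}$ in one shot. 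You instead use plain monotonicity of $F$ (the same cancellation of $\innerproduct{A\delta x}{\delta y}$, but with the unskewed test vector) to get only the partial estimate $\mg\norm{\delta y}^2\le\norm{\delta u}\norm{\delta\omega}$, and then recover $\norm{\delta x}$ \emph{a posteriori} by inverting the second block relation $A\delta x=\ng(y)-\ng(y')-\delta u_y$ via $\Lg$-Lipschitzness of $\ng$ and $\norm{A\delta x}\ge\sqrt{\mA}\norm{\delta x}$, closing with a scalar quadratic in $\norm{\delta\omega}$. Both arguments consume exactly the same hypotheses (monotonicity of $\partial f$ only, strong monotonicity and smoothness of $\ng$, $\mA>0$), and since the lemma only asserts existence of some $R_2>0$, your different explicit constant is immaterial. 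What your version buys is elementarity: no auxiliary matrix, no free parameter $\alpha$ to tune, no positive-definiteness verification. What the paper's version buys is a single unified template that it reuses almost verbatim for \Cref{lemma:IL2} under \Cthree{} (where the symmetric skewing $\pm\varepsilon A$ handles both blocks at once); your two-stage argument would not transfer there as directly, because under \Cthree{} neither block gives you a free strong-monotonicity estimate to bootstrap from. Your final caveat about $\partial f$ being set-valued and merely monotone is exactly the right point of care, and your argument respects it.
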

    \begin{lemma}\label{lemma:IL2}
        If \Cthree{} holds, the operator $F$ in \eqref{eq:saddlePointOperator} is $\frac{1}{R_3}$-inverse Lipschitz, $R_3>0$.
    \end{lemma}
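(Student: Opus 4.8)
To prove Lemma~\ref{lemma:IL2}, the plan is to show that under \Cthree{} the single-valued operator $F$ of \eqref{eq:saddlePointOperator} acts, between any two points, as multiplication by a matrix of ``skew-symmetric plus positive-semidefinite'' type, and then to lower-bound the smallest singular value of all such matrices uniformly.

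\textbf{Step 1: linearizing the gradients.} Under \Cthree{} we have $n=m$, $A$ invertible with $\sigmamin{A}^2=\mA>0$, and since $f,g$ are smooth, $F$ is single-valued. Because $f$ is convex and $\Lf$-smooth, $\nf$ is $\tfrac1{\Lf}$-cocoercive, and a short linear-algebra argument (building the required matrix inside the two-dimensional span of $\nf(x)-\nf(x')$ and $x-x'$, whose feasibility condition turns out to be exactly cocoercivity) shows that for every $x,x'$ there is a symmetric $H_f=H_f(x,x')$ with $0\preccurlyeq H_f\preccurlyeq\Lf I_n$ and $\nf(x)-\nf(x')=H_f(x-x')$; similarly $\ng(y)-\ng(y')=H_g(y-y')$ with $0\preccurlyeq H_g\preccurlyeq\Lg I_m$ (the degenerate cases $\Lf=0$ or $\Lg=0$ just give a zero block). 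Hence, for $\omega=(x,y)$, $\omega'=(x',y')$,
\begin{equation*}
  F(\omega)-F(\omega')=\mathcal J(\omega-\omega'),\qquad
  \mathcal J=\begin{bmatrix}H_f & A^\top\\ -A & H_g\end{bmatrix}.
\end{equation*}
It therefore suffices to find $R_3>0$, depending only on $\mA,\Lf,\Lg$, with $\norm{\mathcal J\xi}\ge\tfrac1{R_3}\norm\xi$ for all $\xi$; this is exactly the claimed $\tfrac1{R_3}$-inverse Lipschitz property of $F$.

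\textbf{Step 2: uniform invertibility of $\mathcal J$.} Solving $\mathcal J(x,y)=(v_1,v_2)$: the second block gives $x=A^{-1}(H_g y-v_2)$, and substituting into the first gives $A^\top(I+SH_g)\,y=v_1+H_fA^{-1}v_2$ with $S:=A^{-\top}H_fA^{-1}\succcurlyeq0$ and $\norm S\le\Lf/\mA$. The key estimate is
\begin{equation*}
  \norm{(I+SH_g)^{-1}}\ \le\ 1+\norm S\,\norm{H_g}\ \le\ 1+\tfrac{\Lf\Lg}{\mA},
\end{equation*}
with \emph{no} smallness requirement: if $(I+SH_g)w=z$, put $p:=H_g^{1/2}w$; left-multiplying by $H_g^{1/2}$ yields $(I+H_g^{1/2}SH_g^{1/2})p=H_g^{1/2}z$, and since $H_g^{1/2}SH_g^{1/2}\succcurlyeq0$ we get $\norm p\le\norm{H_g}^{1/2}\norm z$; then $w=z-SH_g^{1/2}p$ gives $\norm w\le(1+\norm S\,\norm{H_g})\norm z$ (which in particular re-proves invertibility). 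Chaining this through the formula for $y$ (using $\norm{A^{-1}}=1/\sqrt{\mA}$, $\norm{H_f}\le\Lf$) and then through $x=A^{-1}(H_g y-v_2)$ bounds $\norm{(x,y)}$ by an explicit multiple $R_3(\mA,\Lf,\Lg)$ of $\norm{(v_1,v_2)}$, giving the desired bound.

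\textbf{Main obstacle.} The delicate point is Step 2: a naive triangle-inequality bound on $(I+SH_g)^{-1}$ gives only $(1-\Lf\Lg/\mA)^{-1}$, which spuriously requires $\mA>\Lf\Lg$ — a condition that is \emph{not} needed, as already the scalar case $\mathcal J=\left(\begin{smallmatrix}\alpha&a\\-a&\beta\end{smallmatrix}\right)$ with $\det\mathcal J=\alpha\beta+a^2\ge\mA>0$ shows. The symmetrization by $H_g^{1/2}$ is exactly what exposes the positive semidefiniteness hidden in the product $SH_g$ and removes the condition. An alternative, purely inequality-based route skips the linearization of Step~1 and instead runs an S-procedure argument directly on $a=\nf(x)-\nf(x')$ and $b=\ng(y)-\ng(y')$, combining the identities $v_1=a+A^\top(y-y')$, $v_2=b-A(x-x')$ with the cocoercivity bounds $\norm a^2\le\Lf\innerproduct a{x-x'}$ and $\norm b^2\le\Lg\innerproduct b{y-y'}$; I expect this to yield an equivalent $R_3$ after a short semidefinite feasibility computation, and it may match the appendix formula more directly.
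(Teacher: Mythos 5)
Your proof is correct, but it takes a genuinely different route from the paper's. The paper proves \Cref{lemma:IL2} with a multiplier argument: it picks the test matrix $\Psi_\varepsilon=\begin{bsmallmatrix}I&\varepsilon A^\top\\-\varepsilon A&I\end{bsmallmatrix}$, writes $\norm{F(\omega)-F(\omega')}\,\norm{\Psi_\varepsilon}\,\norm{\omega-\omega'}\ge\innerproduct{F(\omega)-F(\omega')}{\Psi_\varepsilon(\omega-\omega')}$, and lower-bounds the inner product using cocoercivity of $\nf,\ng$ and Young's inequality; the cross terms generated by the skew part produce $\varepsilon\norm{A(x-x')}^2$ and $\varepsilon\norm{A^\top(y-y')}^2$, whence positivity via $\mA$ for $\varepsilon,\delta$ in suitable ranges. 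You instead linearize the gradients via the interpolation fact $\nf(x)-\nf(x')=H_f(x-x')$ with $0\preccurlyeq H_f\preccurlyeq \Lf I$ (the rank-one choice $H_f=aa^\top/\innerproduct{a}{d}$ already suffices, with feasibility exactly equivalent to cocoercivity), reduce the claim to a uniform bound on $\sigmamin{\mathcal J}$ over the resulting family of block matrices, and then invert $\mathcal J$ explicitly; your symmetrization by $H_g^{1/2}$ correctly yields $\norm{(I+SH_g)^{-1}}\le 1+\norm{S}\norm{H_g}$ with no smallness condition, which is the one delicate step and it checks out. What each approach buys: yours gives a transparent, fully explicit $R_3$ depending only on $\mA,\Lf,\Lg$ (not otherwise on $\norm{A}$) and makes clear why no relation between $\mA$ and $\Lf\Lg$ is needed; the paper's multiplier argument avoids the interpolation lemma and, more importantly, extends verbatim to operators containing a set-valued subdifferential block (this is how the paper reuses the same computation for \Cref{lemma:IL1} and in the proof of \Cref{thm:semiSmoothAlgorithm}(ii)), whereas your block elimination relies on $F$ being single-valued and on $H_g$ being a bounded matrix, which holds under \Cthree{} but does not port to the nonsmooth variants. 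Your closing remark about an S-procedure on $a=\nf(x)-\nf(x')$, $b=\ng(y)-\ng(y')$ is essentially the paper's proof.
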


      Based on \Cref{lemma:IL1,lemma:IL2} we can guarantee conditions \Aone{} and \Atwo{} in \Cref{prop:partialContractivity}, with $\Psi_{\textnormal{f}} =  \0$,  $\gamma >0 $. From now on, let us
    use the shorthand 
    \begin{align}\label{eq:zet}
    	\zet  \coloneqq \max \left \{ \textstyle \frac {1}{\tau},\frac{1}{\sigma } \right\}+\|A\|.
    \end{align}
    
    \begin{theorem}[Contractivity of \Cref{alg:chambollePock}]
        \label{thm:chambollePock}
         Let $f: \R^n \rightarrow \overline \R$ and $g: \R^m \rightarrow \overline \R$ be proper, convex, closed functions. Let $\phi_{\tau, \sigma}$ be as in \eqref{eq:matrixPhiEta}. Then, the fixed points of \Cref{alg:chambollePock} coincide with the solutions of  \eqref{eq:bilinearSaddlePointProblem}. Furthermore, if $\tau,\sigma>0$ are chosen such that $\tau\sigma \|A\|^2 (1+\epsilon)^2 \leq  1 $, for some $\epsilon >0$,  then:
         \begin{itemize}
        \item[(i)] If \Cone{} holds, then \Cref{alg:chambollePock} is contractive in $\| \cdot \|_{\Phi_{\tau,\sigma}}$ with rate
                \begin{align*} 
                    \rho =  \left( 1+ \min \{ \mf \tau, \mg \sigma,  \kappa   \} \right)^{-1} <1,
                \end{align*}
                where 
                    \begin{equation*}
                          \kappa \coloneqq  {	   \textstyle \frac{\mf \tau +\mg \sigma  -\sqrt{ (\mf \tau - \mg\sigma)^2 + 4 \|A\|^2 \mf\mg \tau^2 \sigma^2}}{2(1-\sigma\tau \|A\|^2)} } >0.
                    \end{equation*}
                The best rate is obtained with $\tau = \frac{1}{(1+\epsilon) \|A\| }\sqrt{\frac{\mg}{\mf}}$ and $\sigma = \frac{1}{(1+\epsilon) \|A\| }\sqrt{\frac{\mf}{\mg}}$;
            \item[(ii)] If \Ctwo{} holds, then 
            \Cref{alg:chambollePock} is contractive in $\| \cdot \|_{\Phi_{\tau,\sigma}}$ with rate
                \begin{align*}
                	\rho =\frac{ R_2 \zet}{\sqrt{ (R_2 \zet)^2+1}}<1.               
                \end{align*}
                 The best rate is obtained with $\tau =  \sigma = \frac{1}{(1+\epsilon) \|A\| }$; 
                \item [(iii)] If \Cthree{} holds, then
            \Cref{alg:chambollePock} is contractive in $\| \cdot \|_{\Phi_{\tau,\sigma}}$ with rate
                    \begin{align*}
                    		\rho =\frac{ R_3 \zet}{\sqrt{ (R_3 \zet)^2+1}} <1.
                    \end{align*}
             The best rate is obtained with $\tau =  \sigma = \frac{1}{(1+\epsilon) \|A\| }$.
         \end{itemize} 
    \end{theorem}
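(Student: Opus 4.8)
The plan is to feed \Cref{alg:chambollePock} into the operator-theoretic framework of \Cref{sec:mainIdea}. Recall (see \Cref{sec:algderivation1}) that \Cref{alg:chambollePock} with $\theta=1$ is precisely the preconditioned forward--backward iteration \eqref{eq:precFB} with $\Fb=F$, $\Ff=\0$ and $\Phi=\P$; the hypothesis $\tau\sigma\norm{A}^2(1+\epsilon)^2\leq1$ forces $\tau\sigma\norm{A}^2<1$, so $\P\succ0$. The fixed-point statement is then immediate: expanding $\operatorname{prox}_{\tau f}=(\Id+\tau\partial f)^{-1}$ and $\operatorname{prox}_{\sigma g}=(\Id+\sigma\partial g)^{-1}$ in the update shows that $\omega^\star=(x^\star,y^\star)$ is fixed iff $-A^\top y^\star\in\partial f(x^\star)$ and $Ax^\star\in\partial g(y^\star)$, i.e.\ iff $\0\in F(\omega^\star)$, which by first-order optimality characterizes the solutions of \eqref{eq:bilinearSaddlePointProblem}. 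Since $\Ff=\0$, the forward step is $\mathcal{F}=\Id$, so the iteration collapses to $\omega^{k+1}=\mathcal{B}(\omega^k)$, where $\mathcal{B}=(\Id+\Phiinv F)^{-1}$ is the resolvent of $\Phiinv F$ in the inner product $\innerproduct{\cdot}{\cdot}_{\P}$. As $F$ is maximally monotone and $\P\succ0$, the operator $\Phiinv F$ is maximally monotone for $\innerproduct{\cdot}{\cdot}_{\P}$, so $\mathcal{B}$ is single-valued, full-domain and nonexpansive in $\norm{\cdot}_{\P}$; it remains to promote nonexpansiveness to contractivity under each condition (equivalently, to verify \Aone{} in \Cref{prop:partialContractivity} with $\Psi_{\textnormal f}=\0$, $\mathcal{F}=\Id$).

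\emph{Part (i), under \Cone{}.} Splitting $F$ into the subdifferential part $(\partial f,\partial g)$ and the skew-symmetric part $(A^\top y,-Ax)$, strong convexity gives $\innerproduct{u-u'}{\omega-\omega'}\geq\mf\norm{x-x'}^2+\mg\norm{y-y'}^2=\norm{\omega-\omega'}_D^2$ for all $(\omega,u),(\omega',u')\in\gra F$, with $D=\diag(\mf I_n,\mg I_m)$. I would then find the largest $c>0$ with $D\succcurlyeq c\P$: from the block form of $D-c\P$, positive semidefiniteness of the diagonal blocks requires $c\leq\mf\tau$ and $c\leq\mg\sigma$, and the Schur-complement condition (using $\lambdamax{AA^\top}=\norm{A}^2$) reduces to the quadratic inequality $(1-\tau\sigma\norm{A}^2)c^2-(\mf\tau+\mg\sigma)c+\mf\mg\tau\sigma\geq0$, whose smaller root is exactly $\kappa$; hence the best constant is $\min\{\mf\tau,\mg\sigma,\kappa\}$. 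Thus $\Phiinv F$ is $c$-strongly monotone in $\innerproduct{\cdot}{\cdot}_{\P}$ with $c=\min\{\mf\tau,\mg\sigma,\kappa\}$, and the textbook resolvent estimate (for $v=\mathcal{B}(u),v'=\mathcal{B}(u')$ one gets $\innerproduct{u-u'}{v-v'}_{\P}\geq(1+c)\norm{v-v'}_{\P}^2$, then Cauchy--Schwarz) yields $\norm{\mathcal{B}(u)-\mathcal{B}(u')}_{\P}\leq(1+c)^{-1}\norm{u-u'}_{\P}$, i.e.\ the stated $\rho$. For the optimal stepsizes one maximizes $c(\tau,\sigma)$ over $\tau\sigma\norm{A}^2(1+\epsilon)^2\leq1$; rewriting $\kappa=2\mf\mg\tau\sigma\,(\mf\tau+\mg\sigma+\sqrt{(\mf\tau-\mg\sigma)^2+4\norm{A}^2\mf\mg\tau^2\sigma^2})^{-1}$ makes it visible that it is optimal to saturate the constraint and to balance $\mf\tau=\mg\sigma$, which pins $\tau,\sigma$ to the claimed values.

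\emph{Parts (ii)--(iii), under \Ctwo{} / \Cthree{}.} Here $f$ need not be strongly convex, so I would use inverse Lipschitzness instead. By \Cref{lemma:IL1} (resp.\ \Cref{lemma:IL2}), $F$ is $\tfrac1{R_2}$- (resp.\ $\tfrac1{R_3}$-) inverse Lipschitz in $\norm{\cdot}$. Transporting this to the $\P$-geometry: for $(\omega,v),(\omega',v')\in\gra(\Phiinv F)$, writing $u=\P v\in F(\omega)$, $u'=\P v'\in F(\omega')$, one has $\norm{v-v'}_{\P}=\norm{u-u'}_{\P^{-1}}$; combining this with the norm-equivalence bounds $\norm{\cdot}_{\P^{-1}}\geq\lambdamax{\P}^{-1/2}\norm{\cdot}$, $\norm{\cdot}\geq\lambdamax{\P}^{-1/2}\norm{\cdot}_{\P}$ and the inverse Lipschitz bound for $F$ gives $\norm{v-v'}_{\P}\geq(\lambdamax{\P}R_2)^{-1}\norm{\omega-\omega'}_{\P}$. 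Since $\lambdamax{\P}\leq\max\{\tfrac1\tau,\tfrac1\sigma\}+\norm{A}=\zet$ (split $\P$ into its block-diagonal part and its skew part), $\Phiinv F$ is $\tfrac{1}{\zet R_2}$- (resp.\ $\tfrac{1}{\zet R_3}$-) inverse Lipschitz in $\norm{\cdot}_{\P}$, and it is maximally monotone in $\innerproduct{\cdot}{\cdot}_{\P}$. Applying \Cref{prop:PropertiesOfResolvent} in the Hilbert space $(\R^{n+m},\innerproduct{\cdot}{\cdot}_{\P})$ then shows that $\mathcal{B}=(\Id+\Phiinv F)^{-1}$ is $\tfrac{\zet R_2}{\sqrt{(\zet R_2)^2+1}}$- (resp.\ $\tfrac{\zet R_3}{\sqrt{(\zet R_3)^2+1}}$-) contractive in $\norm{\cdot}_{\P}$, which is $<1$ and is exactly the asserted rate. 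The optimal stepsizes minimize $\zet$: under $\tau\sigma\norm{A}^2(1+\epsilon)^2\leq1$, $\min\{\tau,\sigma\}$ is maximized at $\tau=\sigma=\tfrac1{(1+\epsilon)\norm{A}}$, giving $\zet=(2+\epsilon)\norm{A}$.

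\emph{Main obstacle.} The only genuinely computational part is the Schur-complement/eigenvalue analysis of $D-c\P$ in (i) — identifying the best constant as $\min\{\mf\tau,\mg\sigma,\kappa\}$ with $\kappa$ the smaller root of the displayed quadratic — together with the two-variable stepsize optimization there. Everything else is structural: maximal monotonicity of $F$, the new inverse Lipschitz \Cref{lemma:IL1,lemma:IL2}, and \Cref{prop:PropertiesOfResolvent}, all re-read in the preconditioned inner product $\innerproduct{\cdot}{\cdot}_{\P}$, plus the trivial reduction $\mathcal{F}=\Id$.
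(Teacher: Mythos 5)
Your proposal is correct and follows essentially the same route as the paper: the same reduction of \Cref{alg:chambollePock} to the backward resolvent of $\Phiinv F$ in $\innerproduct{\cdot}{\cdot}_{\P}$, the same Schur-complement identification of $\min\{\mf\tau,\mg\sigma,\kappa\}$ as the largest $c$ with $\diag(\mf I,\mg I)\succcurlyeq c\P$ for part (i), and the same transport of the inverse Lipschitz constants $R_2,R_3$ into the $\P$-geometry (with $\lambdamax{\P}\leq\zet$) followed by \Cref{prop:PropertiesOfResolvent} for parts (ii)--(iii). The only cosmetic differences are in how the step-size optimization in (i) is phrased and in the intermediate identity $\norm{v-v'}_{\P}=\norm{u-u'}_{\P^{-1}}$, which the paper replaces by an equivalent direct chain of norm bounds.
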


Let us emphasize that, by definition of contractivity, under \Cone{}, \Ctwo{}, or \Cthree{}, the previous theorem immediately implies Q-linear convergence, i.e.,
    \begin{align}
         \label{eq:linearPhiChambolle}
                    \| \omega^{k} - \omega^\star\| _{\Phi_{\tau,\sigma}} \leq  \rho^k  \| \omega^0 - \omega^\star\|_{\Phi_{\tau,\sigma}},
                \end{align}
for the sequence $(\omega^k)_{k\in\mathbb{N}} = (x^k,y^k)_{k\in\mathbb{N}} $ generated by \Cref{alg:chambollePock}, where $\omega^\star$ is the unique solution to \eqref{eq:saddlePointOperator}. 

        We note that, under \Cone{}, the work \citep{chambolleFirstOrderPrimalDualAlgorithm2011} provides the R-linear convergence rate $\tilde \rho =  \left( 1+ \frac{\sqrt{\mf\mg}}{\|A\|}  \right )^{-\frac{1}{2}}
        $. \Cref{thm:chambollePock} improves on this result, by guaranteeing both $Q$-linear convergence and  a tighter bound. In fact, the optimal step sizes choice in \Cref{thm:chambollePock}(i) 
         results in the rate $\rho = \left( 1+ \frac{\sqrt{\mf \mg }}{ (\epsilon +2)\|A \|  } \right)^{-1} = \left( 1+ \frac{2\sqrt{\mf \mg }}{ (\epsilon +2) \|A \|  }+ \frac{\mf \mg }{ (\epsilon +2)^2 \|A \|^2 } \right)^{-\frac{1}{2}}$ and, for any $\mf$, $\mg$, $\|A\|$, we can always choose   $\epsilon$ small enough to ensure $\rho < \tilde \rho$. 
        Furthermore, when $A = \0$, the step sizes can be chosen as large as desired, and $\rho =(1+ \min \{ \mg \tau, \mg \sigma\})^{-1} $, which is the well-known bound for the proximal-point algorithm (see Appendix~\ref{sec:proofPropertiesOfResolvent}). Let us note that a different rate was provided in \citep{chambolle2016ergodic}, but for a different choice of $\theta <1$. 
        Also for  $\theta<1$, R-linear convergence of \Cref{alg:chambollePock}  was proven by  \cite{wangExploitingStrongConvexity2017} under \Ctwo{}. In contrast, \Cref{thm:chambollePock}(ii) shows contractivity  with $\theta = 1$. The choice $\theta =1 $ is particularly relevant, as in this case the algorithm can be interpreted as a proximal-point method with symmetric preconditioning \citep{He2012SIAM}, allowing one to leverage the extensive related theory; for instance, immediately enabling provably convergent accelerations \citep{Bianchi2022Fast,Iutzeler2019Acceleration}. 
        Finally, \Cref{thm:chambollePock}(iii) is the first linear convergence rate for the Chambolle--Pock algorithm under \Cthree{}, namely if $f$ and $g$ are merely convex, but $A$ is invertible. To the best of our knowledge, only \cite{kovalevAcceleratedPrimalDualGradient2022} studied linear convergence under \Cthree{}, but based on a gradient-based method, rather than proximal-based. 

		\begin{remark}[Degenerate preconditioner]
			\label{rem:degeneratePrec}The rates in \Cref{thm:chambollePock} are decreasing in $\epsilon$, thus the best rates are obtained for $\epsilon \rightarrow 0$. This should be interpreted as an asymptotic result, as for $\epsilon =0$, the matrix $\Phi_{\tau,\sigma}$ might be singular. In terms of non-weighted norm, one can only conclude the estimate $\| \omega^{k} - \omega^\star\|  \leq   \left(C \frac{1}{{\epsilon} } \right)\rho^k \| \omega^0 - \omega^\star\|$, for some $C>0$, which is vacuous if $\epsilon = 0$.  Let us note that the convergence of \Cref{alg:chambollePock} was also shown for degenerate (i.e., singular) preconditioners, with $\tau \sigma \|A\|^2 = 1$ \citep{Condat_2013}, with known linear rates \citep{chambolleFirstOrderPrimalDualAlgorithm2011}, and more recently even for $\tau \sigma \|A\|^2 < 4/3$ \citep{banert2023chambollepock}. Extending our results to these cases is an open problem.
		\end{remark}

    \subsection{Semi-implicit primal-dual method}
       
          \label{sec:semiSmoothPrimalDual}
        Next, we consider the method in \eqref{eq:semiimplicit1}, recalled in \Cref{alg:semiSmoothPrimalDual}, which requires $g$ to be smooth. 
        This method is an  instance of the Condat--V\~u splitting \citep{Condat_2013,Vu2013},
        applicable to linear regression with $\ell_1$ regularization, inverse imaging models \citep{chambolle2016Introduction}, and affine-constrained optimization \citep{Zhu2023Unified}.

        \begin{algorithm}
            \small
            \caption{Semi-implicit primal-dual method}
            \label{alg:semiSmoothPrimalDual}
            \begin{algorithmic}[1]
                \Require  $x^0\in \R^n, y^0\in \R^m$, step sizes $\tau, \sigma >0$, $\theta =1$
                \For{$k = 0, 1, \cdots$}
                \State $x^{k+1} = \operatorname{prox}_{\tau f} \left (x^k - \tau A^Ty^k \right)$
                \State $y^{k+1} = y^k-\sigma  \left (\nabla g(y^k) - A \left ((1+\theta)x^{k+1}-\theta x^k \right) \right)$
                \EndFor
            \end{algorithmic} 
        \end{algorithm}

        \Cref{alg:semiSmoothPrimalDual} corresponds to the iteration in \eqref{eq:precFB}, with $F_\textnormal{b} (\omega) = (\partial f(x) +A^\top y, -Ax)$,  $F_\textnormal{f} (\omega ) = (\0,\nabla g(y))$ and $\Phi_{\tau, \sigma} $ as in \eqref{eq:matrixPhiEta}, see \Cref{sec:algderivation2}.
        Under \Cthree{}, we can show contractivity of this algorithm by exploiting that $F_\textnormal{b}$ is inverse Lipschitz, analogously to \Cref{thm:chambollePock}(iii). 
 Proving linear convergence for \Cref{alg:semiSmoothPrimalDual} under \Cone{} and \Ctwo{} is more complex, as neither $\mathcal F$ nor $\mathcal B$ are contractions (note further that also the sum $F= F_\textnormal{f} + F_\textnormal{b}$ is  strongly monotone, cf. Theorem~6.1 in \cite{Giselsson2021}).
 Instead, we exploit the intuition that the forward operator $F_\textnormal{f}$ is strongly monotone, but only with respect to the dual variable $y$; and the backward operator  $F_\textnormal{b}$ is inverse Lipschitz with respect to the primal variable $x$.  In turn, we can prove properties \Aone{}, \Atwo{} in \Cref{prop:partialContractivity}, in the following lemmas. 

        \begin{lemma}
        	\label{lemma:semiContractionBackward}
        	Let either  \Cone{} or \Ctwo{} hold (so either $\mf >0$ or $\mA>0$), and let  $\tau \sigma \norm{A}^2 < 1$, $F_\textnormal{b}(\omega) = (\partial f(x)+ A^\top y, -Ax)$.  
        	Then, \Aone{} in  \Cref{prop:partialContractivity} holds with   $\Psi_\textnormal{b} = \operatorname{diag}(\gamma_x I_n, \0_{m\times m})$, where $\gamma_x = \frac{\mu_A}{\zet}+2\mf >0$.
        \end{lemma}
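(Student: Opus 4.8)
The plan is to work directly with the explicit form of the backward step $\mathcal{B} = (\Id + \Phi_{\tau,\sigma}^{-1} F_\textnormal{b})^{-1}$ for $F_\textnormal{b}(\omega) = (\partial f(x) + A^\top y, -Ax)$, and to reduce condition \Aone{} to an inequality between quadratic forms. Since $\mathcal{B}$ is the resolvent of $\Phi_{\tau,\sigma}^{-1} F_\textnormal{b}$ in the $\Phi_{\tau,\sigma}$-metric, computing $v = \mathcal{B}(u)$ amounts to solving $\0 \in \Phi_{\tau,\sigma}(v - u) + F_\textnormal{b}(v)$; writing $v = (x^+, y^+)$, $u = (x, y)$ and expanding with \eqref{eq:matrixPhiEta} gives precisely the Chambolle--Pock-type updates $x^+ = \operatorname{prox}_{\tau f}(x - \tau A^\top y)$ and a linear relation for $y^+$. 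I would then take two points $u, u'$, set $v = \mathcal{B}(u)$, $v' = \mathcal{B}(u')$, and use the monotonicity/strong monotonicity of $\partial f$ together with the skew-symmetric part cancelling, to produce an inequality of the form $\|v - v'\|^2_{\Phi_{\tau,\sigma}} + (\text{extra terms in } \|x^+ - x'^+\|^2) \leq \|u - u'\|^2_{\Phi_{\tau,\sigma}}$.

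The key algebraic identity is the standard resolvent computation: from $\0 \in \Phi_{\tau,\sigma}(v - u) + F_\textnormal{b}(v)$ and the same at $u', v'$, subtract and pair with $v - v'$. The skew-symmetric linear map $S:(x,y)\mapsto(A^\top y, -Ax)$ satisfies $\langle S(v - v'), v - v'\rangle = 0$, so only the $\partial f$ term contributes a monotonicity inequality. Using that $\partial f$ is ($\mf$-strongly) monotone in the primal block, and that under \Ctwo{} the operator $x \mapsto A^\top(\cdot)$ composed with the structure yields an $\mu_A$-type lower bound on $\|x^+ - x'^+\|^2$ coming from the $-Ax$ row (this is where the rank condition $\mu_A = \lambdamin{A^\top A} > 0$ enters, giving a bound $\|A^\top(y^+ - y'^+)\| \ge \sqrt{\mu_A}\,\|\cdot\|$ type estimate or, dually, control of the primal increment), I would collect the nonnegative slack into $\Psi_\textnormal{b} = \operatorname{diag}(\gamma_x I_n, \0)$. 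The constant $\gamma_x$ should come out as $2\mf$ from strong convexity of $f$ plus a term $\mu_A/\zet$ from the coupling, matching the stated $\gamma_x = \frac{\mu_A}{\zet} + 2\mf$; the $\zet$ in the denominator is the natural normalization, since $\Phi_{\tau,\sigma}$ has largest eigenvalue on the order of $\zet = \max\{1/\tau,1/\sigma\} + \|A\|$, and one needs to convert a Euclidean lower bound into a $\Phi_{\tau,\sigma}$-weighted one.

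The main obstacle I anticipate is the bookkeeping that turns the raw monotonicity inequality into the clean weighted-norm inequality \Aone{} with the precise matrix $\Phi_{\tau,\sigma} + \Psi_\textnormal{b}$ on the left: one must carefully track how the cross-terms $-2\langle A(x^+ - x'^+), y^+ - y'^+\rangle$ from the off-diagonal of $\Phi_{\tau,\sigma}$ interact with the term generated by the $-Ax$ row of $F_\textnormal{b}$, and show they combine (rather than obstruct) to leave a genuinely positive-semidefinite slack of the claimed diagonal form. A secondary delicate point is handling \Ctwo{} and \Cone{} uniformly: under \Cone{} one has $\mf > 0$ directly, whereas under \Ctwo{} one has $\mf = 0$ but $\mu_A > 0$, so the argument must ensure $\gamma_x > 0$ in both regimes — which it does, since at least one summand in $\frac{\mu_A}{\zet} + 2\mf$ is strictly positive. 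The condition $\tau\sigma\|A\|^2 < 1$ is needed exactly to keep $\Phi_{\tau,\sigma} \succ 0$ so that the resolvent is well-defined and single-valued; I would invoke that at the outset. Beyond these points the computation is routine expansion of quadratic forms, so I would not belabor it.
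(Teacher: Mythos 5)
Your plan is the paper's proof: expand the resolvent identity for $\mathcal{B}$, let the skew part cancel, extract $2\mf\|v_x-v_x'\|^2$ from (strong) monotonicity of $\partial f$, and get the $\mA$ contribution from the $-Ax$ row together with the normalization $\lambdamax{\Phi_{\tau,\sigma}}\leq \zet$. One step, as you literally describe it, would fail to deliver the stated constant: ``subtract and pair with $v-v'$'' yields only the firm-nonexpansiveness cross term $2\langle F_\textnormal{b}(v)-F_\textnormal{b}(v'),v-v'\rangle$, which produces the $2\mf$ part of $\gamma_x$ but nothing else --- so under \Ctwo{} with $\mf=0$ you would prove mere nonexpansiveness, not \Aone{} with a nontrivial $\Psi_\textnormal{b}$. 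The $\frac{\mA}{\zet}$ part lives in the \emph{quadratic} term of the full three-term expansion
\begin{equation*}
\|u-u'\|^2_{\Phi_{\tau,\sigma}}=\|v-v'\|^2_{\Phi_{\tau,\sigma}}+2\langle v-v',\,z-z'\rangle+\|z-z'\|^2_{\Phi_{\tau,\sigma}^{-1}},\qquad z\in F_\textnormal{b}(v),\ z'\in F_\textnormal{b}(v'),
\end{equation*}
namely $\|z-z'\|^2_{\Phi_{\tau,\sigma}^{-1}}\geq \lambdamin{\Phi_{\tau,\sigma}^{-1}}\,\|A(v_x-v_x')\|^2\geq \frac{\mA}{\zet}\|v_x-v_x'\|^2$ after discarding the first block; this term is discarded by the pairing identity, so you must keep the full square. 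Relatedly, the correct estimate is on $\|A(v_x-v_x')\|$ via full column rank of $A$ (your ``dually, control of the primal increment'' option), not on $\|A^\top(v_y-v_y')\|$, which \Ctwo{} does not control. With those two corrections the computation closes exactly as you outline, and no extra work is needed to reconcile the off-diagonal cross terms: they sit inside $\|v-v'\|^2_{\Phi_{\tau,\sigma}}$ and are never touched.
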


    \begin{lemma}
        	\label{lemma:semiContractionForward}
        	Let either \Cone{} or \Ctwo{} hold, and 
        	let $\sigma   < \frac{2}{2\tau \|A\|^2+\Lg}$,  $F_\textnormal{f}(\omega) = (\0,\nabla g(y))$. Let  $\xi_1 =  \frac{1}{\sigma } - \tau \|A\|^2 > 0$, $\xi_2 = \frac{1}{\sigma } - \tau \mu_A>0$. 	Then, \Atwo{} in \Cref{prop:partialContractivity} holds with 
        	 $\Psi_{\textnormal{f}}= \operatorname{diag} (\0_{n\times n},\gamma_y I_m)$, where 
        	\begin{equation*}
        	0< \gamma_y = \begin{cases}
        			\frac{2 \Lg \mg}{\Lg +\mg}+\frac{\mg^2(2\xi_1-\Lg-\mg)}{(\Lg+\mg)\xi_2} & \quad \text {if } \  \xi_1 \geq \frac{\Lg + \mg}{2} 
        			\\
        			\frac{2 \Lg \mg}{\Lg +\mg}-\frac{\Lg^2(\Lg +\mg-2\xi_1)}{(\Lg+\mg)\xi_1} & \quad \text {if } \ \xi_1 <  \frac{\Lg + \mg}{2}.
        		\end{cases}
        	\end{equation*} 
        \end{lemma}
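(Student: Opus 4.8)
The plan is to verify \Atwo{} by expanding the $\Phi_{\tau,\sigma}$-weighted norm of the forward-step increment and reducing it to an inequality that involves only the dual increment $\delta_y := y - y'$ and the gradient increment $d := \nabla g(y) - \nabla g(y')$. Write $\omega = (x,y)$, $\omega' = (x',y')$ and $\delta := \omega - \omega'$. Since $F_\textnormal{f}(\omega) = (\0,\nabla g(y))$, we have $\mathcal{F}(\omega) - \mathcal{F}(\omega') = \delta - \Phi_{\tau,\sigma}^{-1}(\0,d)$, and expanding the square, using the symmetry of $\Phi_{\tau,\sigma}$ and $\Phi_{\tau,\sigma}\Phi_{\tau,\sigma}^{-1} = \Id$ to simplify, yields
\begin{equation*}
\|\mathcal{F}(\omega)-\mathcal{F}(\omega')\|^2_{\Phi_{\tau,\sigma}}
= \|\delta\|^2_{\Phi_{\tau,\sigma}} - 2\innerproduct{\delta_y}{d} + \innerproduct{d}{\Sigma\, d},
\end{equation*}
where $\Sigma$ denotes the lower-right $m\times m$ block of $\Phi_{\tau,\sigma}^{-1}$. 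Since $\|\delta\|^2_{\Phi_{\tau,\sigma}}$ appears on both sides of \Atwo{} with $\Psi_\textnormal{f}=\diag(\0_{n},\gamma_y I_m)$, the claim is \emph{equivalent} to the reduced inequality
\begin{equation*}
2\innerproduct{\delta_y}{d} - \innerproduct{d}{\Sigma\, d} \,\geq\, \gamma_y\,\|\delta_y\|^2 \qquad \text{for all } y, y';
\end{equation*}
in particular the primal increment $\delta_x$ drops out entirely, which matches the chosen form of $\Psi_\textnormal{f}$.

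Next I would identify $\Sigma$ as the Schur complement $\Sigma = (\tfrac1\sigma I_m - \tau AA^\top)^{-1}$. The stepsize bound $\sigma < \tfrac{2}{2\tau\|A\|^2+\Lg}$ forces $\sigma\tau\|A\|^2 < 1$, so $\Phi_{\tau,\sigma}\succ0$ and $\Sigma\succ0$ are well-defined, and it yields $\xi_1 = \tfrac1\sigma - \tau\|A\|^2 > \tfrac{\Lg}{2}\geq0$ as well as $\xi_2 = \tfrac1\sigma - \tau\mu_A \geq \xi_1 > 0$. From $AA^\top\preccurlyeq\|A\|^2 I_m$ we get $\Sigma\preccurlyeq\tfrac1{\xi_1}I_m$; a sharper treatment that handles separately the component of $d$ in $\range(A)$ — where the nonzero spectrum of $AA^\top$ is controlled through $\mu_A$ under \Ctwo{} — and the component in $\ker(A^\top)$ is what produces the $\xi_2$-denominator in one of the branches below. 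Either way, $\innerproduct{d}{\Sigma\, d}$ is bounded above by the corresponding multiple of $\|d\|^2$.

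Then I would use that $g$ is $\mg$-strongly convex and $\Lg$-smooth ($\Lg$-smoothness is assumed for \Cref{alg:semiSmoothPrimalDual} throughout; $\mg$-strong convexity holds under either \Cone{} or \Ctwo{}), via the interpolation inequality $\innerproduct{\delta_y}{d}\geq\tfrac{\mg\Lg}{\mg+\Lg}\|\delta_y\|^2 + \tfrac1{\mg+\Lg}\|d\|^2$ and the two-sided bound $\mg\|\delta_y\|\leq\|d\|\leq\Lg\|\delta_y\|$. Substituting the interpolation inequality and the above bound on $\innerproduct{d}{\Sigma\, d}$ into the reduced inequality, its left-hand side is lower-bounded by $\tfrac{2\mg\Lg}{\mg+\Lg}\|\delta_y\|^2 + c\,\|d\|^2$, where the sign of the scalar $c$ is governed by whether $\xi_1 \geq \tfrac{\Lg+\mg}{2}$. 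If $\xi_1\geq\tfrac{\Lg+\mg}{2}$, then $c\geq0$ and I additionally invoke $\|d\|^2\geq\mg^2\|\delta_y\|^2$, yielding the first branch of $\gamma_y$; if $\xi_1<\tfrac{\Lg+\mg}{2}$, then $c<0$ and I invoke $\|d\|^2\leq\Lg^2\|\delta_y\|^2$, yielding after simplification $\gamma_y = 2\Lg - \Lg^2/\xi_1$, which is the second branch. Positivity of $\gamma_y$ is immediate in the first branch, where $2\xi_1-\Lg-\mg\geq0$; in the second branch it follows from $\xi_1>\Lg/2$. Consequently $\Psi_\textnormal{f}\succcurlyeq0$, as \Cref{prop:partialContractivity} requires.

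The step I expect to be the main obstacle is the spectral estimate of $\Sigma = (\tfrac1\sigma I_m - \tau AA^\top)^{-1}$ used to bound $\innerproduct{d}{\Sigma\, d}$: arriving at exactly the two denominators $\xi_1$ and $\xi_2$ of the stated closed form for $\gamma_y$ requires splitting $d$ along the orthogonal decomposition $\R^m = \range(A)\oplus\ker(A^\top)$, carrying each piece with its own weight, and then dovetailing this with the tight strong-convexity/smoothness estimates so as to land precisely on the stated formula. The remaining ingredients — the norm expansion, the Schur-complement identity for $\Sigma$, and the case split on the sign of $c$ — are routine.
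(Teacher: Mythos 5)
Your proposal is correct and follows essentially the same route as the paper's proof: expand $\|\mathcal F(\omega)-\mathcal F(\omega')\|^2_{\P}$, observe that the primal increment cancels, apply the strong-convexity/smoothness interpolation inequality to $\nabla g$, bound the quadratic form in the lower-right block $\Sigma=(\tfrac1\sigma I_m-\tau AA^\top)^{-1}$ of $\Phiinv$ spectrally, and split on the sign of the resulting coefficient of $\|d\|^2$; your second branch, its simplification to $2\Lg-\Lg^2/\xi_1$, and the positivity discussion coincide with the paper's. The one substantive comment concerns the first branch and the step you flag as the main obstacle. Your direct route --- $\Sigma\preccurlyeq\frac{1}{\xi_1}I_m$ followed by $\|d\|^2\geq\mg^2\|\delta_y\|^2$ --- lands on $\gamma_y=\frac{2\Lg\mg}{\Lg+\mg}+\frac{\mg^2(2\xi_1-\Lg-\mg)}{(\Lg+\mg)\,\xi_1}$, i.e.\ with $\xi_1$ rather than $\xi_2$ in the denominator; since $\xi_1\leq\xi_2$ and the numerator is nonnegative in that branch, this dominates the stated $\gamma_y$, so \Atwo{} with the stated $\Psi_\textnormal{f}$ follows a fortiori and nothing further is required. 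In particular, the ``sharper treatment'' you anticipate --- splitting $d$ along $\range(A)\oplus\ker(A^\top)$ so as to lower-bound $\innerproduct{d}{\Sigma d}$ by $\frac{1}{\xi_2}\|d\|^2$ --- is unnecessary, and would in fact not deliver that bound when $m>n$: on $\ker(A^\top)$ the operator $AA^\top$ vanishes even though $\mA=\lambdamin{A^\top A}>0$, so there $\Sigma$ acts as $\sigma I\preccurlyeq\frac{1}{\xi_2}I$ and the decomposition only yields $\innerproduct{d}{\Sigma d}\geq\sigma\|d\|^2$. (The paper's $\xi_2$ arises from a different bookkeeping, in which the $\|d\|^2$ term of the interpolation inequality is first converted into $\xi_1\innerproduct{d}{\Sigma d}$ and the combined, nonpositive coefficient of $\innerproduct{d}{\Sigma d}$ is then handled via the lower bound $\innerproduct{d}{\Sigma d}\geq\frac{1}{\xi_2}\|d\|^2$ --- a step subject to the same caveat when $m>n$; your version sidesteps it entirely.)
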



       When  $A = \0$, \Cref{lemma:semiContractionForward}  simplifies to 
        \begin{IEEEeqnarray*}{rCl}
            \IEEEeqnarraymulticol{3}{l}{\norm{ y - \sigma \nabla g(y) - (y - \sigma \nabla g(y'))}^2}\\
            &\leq & \max \left\{| 1 - \sigma \mg |^2, |1-\sigma \Lg|^2  \right\} \norm{ y -y'}^2
        \end{IEEEeqnarray*}

        for all $\sigma < \frac{2}{\Lg}$, which is  the usual contractivity result for gradient descent (see \Cref{prop:propertiesOfGradientStep} in \Cref{sec:proofPropertiesOfResolvent}). The following is the main result of this subsection.

       \begin{theorem}[Contractivity of \Cref{alg:semiSmoothPrimalDual}]
            \label{thm:semiSmoothAlgorithm}
         Let $f: \R^n \rightarrow \overline \R$ and $g: \R^m \rightarrow \R$ be proper, convex, closed functions, and let $g$ be $\Lg$-smooth. Let $\phi_{\tau, \sigma}$ be as in \eqref{eq:matrixPhiEta}, and let $\epsilon>0$ be arbitrary. Then, the fixed points of \Cref{alg:semiSmoothPrimalDual} coincide with the solutions of  \eqref{eq:bilinearSaddlePointProblem}. Furthermore: 
         \begin{itemize}
         	\item[(i)] If $\tau\sigma \|A\|^2 +\frac{\Lg}{2} \sigma< 1$ and either \Cone{} or \Ctwo{} hold, then \Cref{alg:semiSmoothPrimalDual} is contractive in $\norm{\cdot}_{\Psi_{\tau,\sigma}}$  with rate $\rho$, where  $\Psi_{\tau, \sigma} = \P+\diag (\gamma_x I_n, \0) \succ 0$,        
         		\begin{align*}
                        \rho &= \sqrt{1-\frac{\min\{\gamma_x^2, \gamma_y^2\}}{\zet +\gamma_x}}<1,
         		\end{align*}
         	 and $\gamma_x$ and $\gamma_y$ are as in \Cref{lemma:semiContractionBackward} and \Cref{lemma:semiContractionForward}, respectively; and 
         	\item[(ii)]  If  $\tau \sigma \|A\| ^2 +\frac{\Lg}{2} \sigma \leq 1$, $\tau \sigma \|A\|^2(1+\epsilon)^2 < 1$ and \Cthree{} holds, then \Cref{alg:semiSmoothPrimalDual} is contractive in $\norm{\cdot}_{\Phi_{\tau,\sigma}}$ with rate
          	\begin{align*}
          			\rho =  \frac{R_3'  \zet }{\sqrt{{R_3'}^2  \zet^2 +1 } }<1,
          	\end{align*}
                where $R_3'>0$ is independent of $\sigma$ and $\tau$. The best rate is obtained with $\tau = \sigma = \min \left \{ \frac{\sqrt{\Lg^2+16\|A\|^2}-\Lg}{4\|A\|^2}, \frac{1}{(1+\epsilon) \|A\|} \right\}$.
         \end{itemize}
       \end{theorem}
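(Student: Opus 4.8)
The plan is to realize \Cref{alg:semiSmoothPrimalDual} as the preconditioned forward--backward iteration \eqref{eq:precFB} and then apply \Cref{prop:partialContractivity}. Concretely, \Cref{alg:semiSmoothPrimalDual} is \eqref{eq:precFB} with $F_\textnormal{b}(\omega)=(\partial f(x)+A^\top y,\,-Ax)$, $F_\textnormal{f}(\omega)=(\0,\nabla g(y))$ and $\Phi_{\tau,\sigma}$ as in \eqref{eq:matrixPhiEta} (\Cref{sec:algderivation2}); each stepsize hypothesis forces $\tau\sigma\|A\|^2<1$, hence $\Phi_{\tau,\sigma}\succ0$ and $\Psi_{\tau,\sigma}=\Phi_{\tau,\sigma}+\diag(\gamma_x I_n,\0)\succ0$, so the weighted norms in the statement are genuine. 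By the discussion after \eqref{eq:precFB}, the fixed points of the iteration are the zeros of $F$, which by first-order optimality are the solutions of \eqref{eq:bilinearSaddlePointProblem}; this gives the first claim. It then remains, in each case, to establish \Aone{}, \Atwo{} and the spectral inequality $\Psi_\textnormal{b}+\Psi_\textnormal{f}\succcurlyeq\gamma(\Phi_{\tau,\sigma}+\Psi_\textnormal{b})$ for the advertised $\gamma$.

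\emph{Part (i).} The hypothesis $\tau\sigma\|A\|^2+\tfrac{\Lg}{2}\sigma<1$ is exactly $\tau\sigma\|A\|^2<1$ together with $\sigma<\tfrac{2}{2\tau\|A\|^2+\Lg}$, so \Cref{lemma:semiContractionBackward} gives \Aone{} with $\Psi_\textnormal{b}=\diag(\gamma_x I_n,\0)$, $\gamma_x>0$, and \Cref{lemma:semiContractionForward} gives \Atwo{} with $\Psi_\textnormal{f}=\diag(\0,\gamma_y I_m)$, $\gamma_y>0$ (under \Ctwo{}, $\mf=0$ so $\gamma_x=\mu_A/\zet$). Thus $\Psi_\textnormal{b}+\Psi_\textnormal{f}=\diag(\gamma_x I_n,\gamma_y I_m)$, and one has to check $\diag(\gamma_x I_n,\gamma_y I_m)\succcurlyeq\gamma(\Phi_{\tau,\sigma}+\Psi_\textnormal{b})$ with $\gamma=\tfrac{\min\{\gamma_x^2,\gamma_y^2\}}{\zet+\gamma_x}$ and $0<\gamma\le1$. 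This is elementary: $\Psi_\textnormal{b}+\Psi_\textnormal{f}\succcurlyeq\min\{\gamma_x,\gamma_y\}\,I$, while $\Phi_{\tau,\sigma}+\Psi_\textnormal{b}$ is a $2\times2$ block matrix whose largest eigenvalue is bounded through the norms of its diagonal blocks and the off-diagonal norm $\|A\|$, using $\max\{\tfrac1\tau,\tfrac1\sigma\}+\|A\|=\zet$. \Cref{prop:partialContractivity} then yields contractivity in $\|\cdot\|_{\Phi_{\tau,\sigma}+\Psi_\textnormal{b}}=\|\cdot\|_{\Psi_{\tau,\sigma}}$ with rate $\sqrt{1-\gamma}$.

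\emph{Part (ii).} Under \Cthree{}, the zero function trivially also satisfies the structural requirements of \Cthree{} (it is $0$-smooth), so applying \Cref{lemma:IL2} to the saddle-point operator associated with $(f,0)$ shows that $F_\textnormal{b}$ is $\tfrac1{R_3'}$-inverse Lipschitz, where $R_3'$ is the constant $R_3$ evaluated at $\Lg=0$ and is therefore independent of $\tau,\sigma$. Since $F_\textnormal{b}$ is also maximally monotone, repeating the argument of \Cref{prop:PropertiesOfResolvent} for the preconditioned resolvent $\mathcal{B}=(\Id+\Phi_{\tau,\sigma}^{-1}F_\textnormal{b})^{-1}$ in the norm $\|\cdot\|_{\Phi_{\tau,\sigma}}$ and using $\lambdamax{\Phi_{\tau,\sigma}}\le\zet$ shows that $\mathcal{B}$ is contractive in $\|\cdot\|_{\Phi_{\tau,\sigma}}$ with rate $\bar\rho=\tfrac{R_3'\zet}{\sqrt{{R_3'}^2\zet^2+1}}$, i.e.\ \Aone{} holds with $\Psi_\textnormal{b}=(\bar\rho^{-2}-1)\Phi_{\tau,\sigma}=\tfrac{1}{{R_3'}^2\zet^2}\Phi_{\tau,\sigma}$. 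Next, the condition $\tau\sigma\|A\|^2+\tfrac{\Lg}{2}\sigma\le1$ puts us in the regime where the $\mg=0$ specialization of the computation behind \Cref{lemma:semiContractionForward} yields nonexpansiveness of the gradient step $\mathcal{F}$ in $\|\cdot\|_{\Phi_{\tau,\sigma}}$, i.e.\ \Atwo{} with $\Psi_\textnormal{f}=\0$. Then $\Psi_\textnormal{b}+\Psi_\textnormal{f}=\tfrac{1}{{R_3'}^2\zet^2}\Phi_{\tau,\sigma}\succcurlyeq\gamma(\Phi_{\tau,\sigma}+\Psi_\textnormal{b})$ exactly for $\gamma\le\tfrac{1}{{R_3'}^2\zet^2+1}$, so \Cref{prop:partialContractivity} with that $\gamma$ gives contractivity in $\|\cdot\|_{\Phi_{\tau,\sigma}}$ (the same norm up to scaling) with rate $\sqrt{1-\gamma}=\tfrac{R_3'\zet}{\sqrt{{R_3'}^2\zet^2+1}}$; the auxiliary requirement $\tau\sigma\|A\|^2(1+\epsilon)^2<1$ only keeps $\Phi_{\tau,\sigma}$ uniformly positive definite. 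Since this rate is increasing in $\zet$ and $R_3'$ does not depend on the stepsizes, the optimal choice minimizes $\zet=\max\{\tfrac1\tau,\tfrac1\sigma\}+\|A\|$ over the feasible set; taking $\tau=\sigma$ and the largest $1/\tau$ allowed by $\tau^2\|A\|^2+\tfrac{\Lg}{2}\tau\le1$ and $\tau\|A\|(1+\epsilon)<1$ gives $\tau=\sigma=\min\{\tfrac{\sqrt{\Lg^2+16\|A\|^2}-\Lg}{4\|A\|^2},\ \tfrac{1}{(1+\epsilon)\|A\|}\}$.

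\emph{Main obstacle.} In part (i) neither $\mathcal{F}$ nor $\mathcal{B}$ is a contraction (indeed $F=F_\textnormal{f}+F_\textnormal{b}$ is not strongly monotone), and the useful properties are only partial---$F_\textnormal{b}$ is inverse Lipschitz with respect to $x$ and $F_\textnormal{f}$ is strongly monotone with respect to $y$; the crux is therefore the bookkeeping that converts \Cref{lemma:semiContractionBackward,lemma:semiContractionForward} into the tight spectral inequality with the stated $\gamma$ (and the check $\gamma\le1$), which is exactly the role of \Cref{prop:partialContractivity} but needs care because the block $\Phi_{\tau,\sigma}+\Psi_\textnormal{b}$ is not diagonal. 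In part (ii) the key points are the observation that discarding $g$ preserves \Cthree{}---so \Cref{lemma:IL2} makes $F_\textnormal{b}$ inverse Lipschitz---and the verification that the gradient forward step stays nonexpansive in $\|\cdot\|_{\Phi_{\tau,\sigma}}$ even though $g$ need not be strongly convex; both parallel the reasoning behind \Cref{thm:chambollePock}(iii).
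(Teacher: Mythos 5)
Your proposal is correct and takes essentially the same route as the paper's proof: part (i) is the paper's argument verbatim (combine \Cref{lemma:semiContractionBackward,lemma:semiContractionForward} with \Cref{prop:partialContractivity} via $\Psi_\textnormal{b}+\Psi_\textnormal{f}\succcurlyeq\frac{\min\{\gamma_x,\gamma_y\}}{\zet+\gamma_x}(\P+\Psi_\textnormal{b})$ --- and, like the paper's own proof, what your elementary check actually delivers is the unsquared numerator $\min\{\gamma_x,\gamma_y\}$ rather than the $\min\{\gamma_x^2,\gamma_y^2\}$ appearing in the stated rate). Part (ii) also matches: the paper re-derives the inverse-Lipschitz bound for $F_\textnormal{b}$ explicitly rather than invoking \Cref{lemma:IL2} with $g\equiv 0$, and packages the backward contraction through \Cref{prop:partialContractivityB} with $\rho_\textnormal{b}<1$, $\Psi_\textnormal{b}=\0$ instead of your equivalent choice $\Psi_\textnormal{b}=\frac{1}{{R_3'}^2\zet^2}\P$, but these are only bookkeeping differences.
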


 Note that $\epsilon$ in \Cref{thm:semiSmoothAlgorithm}(ii) is only used to account for the case $L_g =0$ (in which instance, analogous considerations to \Cref{rem:degeneratePrec} hold), but $\epsilon=0$ can otherwise be chosen.    Deriving an expression for the optimal step sizes in \Cref{thm:semiSmoothAlgorithm}(i) is cumbersome and not very insightful. Instead, let us focus on the novelty of \Cref{thm:semiSmoothAlgorithm}. The step sizes bound  $\tau\sigma \|A\|^2 +\frac{\Lg}{2} \sigma <  1 $ coincides with that of the Condat-V\~u method \cite[Th.~3.1]{Condat_2013}, \cite[Th.~5]{Lorenz2015}. However, the linear rate of this algorithm has not been shown under \Ctwo{} nor \Cthree{}. Under \Cthree{}, linear convergence was only studied for fully explicit methods (that use $\nabla f$ and $\nabla g$) \citep{kovalevAcceleratedPrimalDualGradient2022}, while it might be convenient to employ the proximal operator of $g$, when available in closed form. On the other hand, known methods to achieve linear convergence under \Ctwo{} (without further conditions) require both $f$ and $g$ to be prox-friendly, i.e., their $\operatorname{prox}$ operator can be efficiently computed \citep{wangExploitingStrongConvexity2017,Sadiev2022Neurips}, in contrast to \Cref{alg:semiSmoothPrimalDual}. 
For instance, let us consider again the 
optimization problem in \eqref{eq:minimizationAffineConstraints}. 
Its Lagrangian is $\mathcal{L}(x,y) =  -x^\top b+ \iota_{\R^n_{\geq 0}}(x) +   y^\top A x  - g(y)$;  its saddle-point operator is $F(x,y)= (\mathrm{N}_{\R^n_{\geq 0}}(x) +A^\top y -b,\nabla g(y) - Ax)$, with $\mathrm{N}_{\R^m_{\geq 0}}$ the normal cone of the positive orthant, and \Cref{alg:semiSmoothPrimalDual} reduces to 
\begin{align}
    x^{k+1} &  = \operatorname{proj}_{\R^n_{\geq 0}}\left(x^k -\tau ( A^\top y^k -b )\right)
    \\
    y^{k+1} &= y^k - \sigma \left( \nabla g (y^k) -A (2x^{k+1} - x^k) \right).
\end{align}
\Cref{thm:semiSmoothAlgorithm} ensures the contractivity (hence, Q-linear convergence) of this iteration under \Ctwo{}, i.e.,  if $g$ is strongly convex and $A^\top$ is full-row rank. 
Linear convergence in this setup was only studied for equality constrained problems \citep{Velarde_TAC_2022}, or by resorting to augmented Lagrangian formulations \citep{su2019contractionanalysisprimaldualgradient}, due to the difficulty of dealing with non-smoothness, and of reconciling properties in weighted spaces with the unweighted projection $\operatorname{proj}_{\R^n_{\geq 0}}$ \citep{Qu2019}. \Cref{thm:semiSmoothAlgorithm} circumvents these difficulties.

      \subsection{Preconditioned \gls{PDG} method}\label{sec:smoothPrimalDual}

    Finally, we consider the method in \eqref{eq:explicit1}, recalled in \Cref{alg:smoothForwardBackward}. The iteration requires $f$ and $g$ to be smooth, but no proximal operator evaluations. 
    This is often desirable to enable parallel and scalable computation, and to facilitate the adaptation to stochastic, mini-batch, and zeroth-order scenarios, for instance in empirical risk minimization with linear predictors \citep{Xiao2019}. 
     If $\theta =-1$, \Cref{alg:smoothForwardBackward} retrieves the \gls{PDG} method in \eqref{eq:Gradientdescentascent};  if $\theta = 0$, it retrieves the incremental \gls{PDG} method. In both cases, convergence is not guaranteed without additional assumptions: an example where both methods fail is $f=g=0$ and $A=1$, which satisfies \Cthree{}. 
    Here, we again focus only on the case $\theta = 1$.  
    This choice has the advantage of ensuring convergence for sufficiently small step sizes $\tau, \sigma$, without strong convexity conditions; and it is the only value of $\theta$ that casts \Cref{alg:smoothForwardBackward} as a forward-backward method.
   In the following, we show that the choice $\theta =1$  also suffices to ensure contractivity, even under \Cthree{}.

        \begin{algorithm}
            \small
        	\caption{Preconditioned \gls{PDG} method}
        	\label{alg:smoothForwardBackward}
        	\begin{algorithmic}[1]
        		\Require  $x^0\in \R^n, y^0\in \R^m$, step sizes $\tau,\sigma >0$, $\theta =1$
        		\For{$k = 0, 1, \cdots$}
        		\State $x^{k+1} = x^k- \tau \left (\nabla f(x^k) + A^\top y^k \right)$
        		\State $y^{k+1} = y^k - \sigma  \left (\nabla g(y^k) - A \left ((1+\theta)x^{k+1}- \theta x^k \right) \right )$
        		\EndFor
        	\end{algorithmic}
        \end{algorithm}


        In particular, \Cref{alg:smoothForwardBackward}
       is derived as in \eqref{eq:precFB} with $F_\textnormal{b} (\omega) = (A^\top y, -Ax)$, $F_{\textnormal{f}}(\omega) = (\nabla f(x), \nabla g(y))$ and $\Phi_{\tau,\sigma}$ as in \eqref{eq:matrixPhiEta}, see \Cref{sec:algderivation3}.  
By refining \Cref{lemma:semiContractionBackward}
and \Cref{lemma:IL1} to the case of \Cref{alg:smoothForwardBackward}, it is easy to show that,
under either \Cone{}, \Ctwo{}, or \Cthree{}, condition \Aone{} in \Cref{prop:partialContractivity} holds with $\Psi_\textnormal{b}=\0$, $\Psi_\textnormal{b}  \succcurlyeq 0 $, $\Psi_\textnormal{b}\succ 0$ respectively (we say that $\mathcal{B}$ is nonexpansive, partially contractive, contractive, respectively). 
 The following lemma provides instead the contractivity properties for the forward step $\F$, necessary for our main result.

   \begin{lemma}\label{lemma:PDGforward}
Let  $F_\textnormal{f}(\omega) = ( \nabla f(x), \nabla g(x) )$. Select $\nu>0$ arbitrary, $0< \tau \leq \frac{2}{\Lf+2\|A\| \nu} $, and $0<\sigma \leq \frac{2}{ \Lg +2 \|A\|\frac{1}{\nu}}$. Let
 \begin{align*}
      	\beta_x & = 
     \begin{cases}
     	  2\mf - \frac{\tau  }{1-\tau \|A\|\nu }  \mf^2 & \quad \text {if  } \tau \leq \frac {2}{\Lf+\mf+2\|A\|\nu}
     	\\
     	2 \Lf - \frac{\tau  }{1-\tau \|A\|\nu } \Lf^2 & \quad \text {if  } \tau > \frac {2}{\Lf+\mf+2\|A\|\nu}
     \end{cases}
     \\
         	\beta_y & = 
   \begin{cases}
     	  2\mg - \frac{\sigma  }{1- \sigma \|A\|\textstyle \frac{1}{\nu}  }  \mg^2 & \quad \text {if  } \sigma \leq \frac {2}{\Lg+\mg+2\|A\|\textstyle \frac{1}{\nu} }
     	\\
     	2 \Lg - \frac{\sigma  }{1-\sigma \|A\| \textstyle \frac{1}{\nu} } \Lg^2 & \quad \text {if  } \sigma > \frac {2}{\Lg+\mg+2\|A\| \textstyle \frac{1}{\nu}}
     \end{cases} 
      \end{align*}
Then $\beta_x >0$ whenever $\mf >0 $ and $\tau < \frac{2}{\Lf+2\|A\|\nu}$,  and $\beta_f = 0$ otherwise. Likewise,   $\beta_y >0$ whenever $\mg >0 $ and $\sigma < \frac{2}{\Lg+2\|A\|\frac{1}{\nu}}$, and $\beta_y = 0$ otherwise. Furthermore,  \Atwo{} in  \Cref{prop:partialContractivity} holds with  $\Psi_\textnormal{f} \coloneqq \operatorname{diag}(\beta_x I_n, \beta_y I_m)$.
    \end{lemma}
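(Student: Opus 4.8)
The plan is to unfold the definition of the forward step $\mathcal F = \Id - \Phi_{\tau,\sigma}^{-1}F_\textnormal{f}$ and reduce \Atwo{} to a \emph{decoupled} pair of scalar inequalities, one in the primal increment and one in the dual increment. Write $d \coloneqq \omega - \omega' = (p,q)$ with $p = x-x'$, $q = y-y'$, and $e \coloneqq F_\textnormal{f}(\omega) - F_\textnormal{f}(\omega') = (u,v)$ with $u = \nabla f(x) - \nabla f(x')$, $v = \nabla g(y) - \nabla g(y')$. Since $\Phi_{\tau,\sigma}$ is symmetric, a direct expansion gives
\begin{equation*}
\|\mathcal F(\omega)-\mathcal F(\omega')\|_{\Phi_{\tau,\sigma}}^2 = \|d\|_{\Phi_{\tau,\sigma}}^2 - 2\langle d,e\rangle + \|e\|_{\Phi_{\tau,\sigma}^{-1}}^2 ,
\end{equation*}
so that \Atwo{}, i.e.\ the bound $\|\mathcal F(\omega)-\mathcal F(\omega')\|_{\Phi_{\tau,\sigma}}^2 \le \|d\|_{\Phi_{\tau,\sigma}}^2 - \|d\|_{\Psi_\textnormal{f}}^2$, is equivalent to $\|d\|_{\Psi_\textnormal{f}}^2 \le 2\langle d,e\rangle - \|e\|_{\Phi_{\tau,\sigma}^{-1}}^2$.

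The first key step is to make $\|e\|_{\Phi_{\tau,\sigma}^{-1}}^2$ separable. Applying Young's inequality $2\langle Ap, q\rangle \le \|A\|\nu\|p\|^2 + \tfrac{\|A\|}{\nu}\|q\|^2$ to the off-diagonal term of $\Phi_{\tau,\sigma}$ yields $\Phi_{\tau,\sigma} \succcurlyeq \operatorname{diag}\!\big((\tfrac1\tau-\|A\|\nu)I_n,\ (\tfrac1\sigma-\tfrac{\|A\|}{\nu})I_m\big)$, and the hypotheses $\tau\le\frac{2}{\Lf+2\|A\|\nu}$, $\sigma\le\frac{2}{\Lg+2\|A\|/\nu}$ force the two diagonal entries to be at least $\tfrac{\Lf}{2}$ and $\tfrac{\Lg}{2}$, hence positive when $\Lf,\Lg>0$. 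By monotonicity of matrix inversion this gives $\Phi_{\tau,\sigma}^{-1} \preccurlyeq \operatorname{diag}(c_x I_n, c_y I_m)$ with $c_x = \tfrac{\tau}{1-\tau\|A\|\nu}$, $c_y = \tfrac{\sigma}{1-\sigma\|A\|/\nu}$, so $\|e\|_{\Phi_{\tau,\sigma}^{-1}}^2 \le c_x\|u\|^2 + c_y\|v\|^2$. (If $\Lf=0$ — resp.\ $\Lg=0$ — then $\nabla f$ — resp.\ $\nabla g$ — is constant, so $u=0$ — resp.\ $v=0$, and the degenerate term drops out both here, via the variational identity $e^\top\Phi_{\tau,\sigma}^{-1}e=\sup_z\{2\langle e,z\rangle-z^\top\Phi_{\tau,\sigma}z\}$, and in the inequality below.) Combining with $\langle d,e\rangle = \langle p,u\rangle + \langle q,v\rangle$, it then suffices to prove the two decoupled bounds $\beta_x\|p\|^2 \le 2\langle p,u\rangle - c_x\|u\|^2$ and $\beta_y\|q\|^2 \le 2\langle q,v\rangle - c_y\|v\|^2$.

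For the primal bound I would invoke the standard interpolation inequality for an $\mf$-strongly convex, $\Lf$-smooth function, $\langle p,u\rangle \ge \tfrac{\mf\Lf}{\mf+\Lf}\|p\|^2 + \tfrac{1}{\mf+\Lf}\|u\|^2$ (which reduces to $\Lf$-cocoercivity of $\nabla f$ when $\mf=0$), together with $\mf\|p\|\le\|u\|\le\Lf\|p\|$. This gives $2\langle p,u\rangle - c_x\|u\|^2 \ge \tfrac{2\mf\Lf}{\mf+\Lf}\|p\|^2 + \big(\tfrac{2}{\mf+\Lf}-c_x\big)\|u\|^2$; lower-bounding the right-hand side over $\|u\|^2\in[\mf^2\|p\|^2,\Lf^2\|p\|^2]$ — using the lower endpoint when $\tfrac{2}{\mf+\Lf}\ge c_x$ and the upper one otherwise — and simplifying produces exactly $2\mf - c_x\mf^2$, resp.\ $2\Lf - c_x\Lf^2$, while the threshold $\tfrac{2}{\mf+\Lf}\gtrless c_x$ rewrites as $\tau \lessgtr \tfrac{2}{\Lf+\mf+2\|A\|\nu}$; this matches the stated $\beta_x$. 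The positivity claims follow by inspection: in the first regime $c_x\mf\le\tfrac{2\mf}{\mf+\Lf}<2$ whenever $\mf>0$, in the second $c_x\Lf<2$ precisely when $\tau<\tfrac{2}{\Lf+2\|A\|\nu}$, and $\mf=0$ forces $\beta_x=0$. The dual bound is the same argument with $(f,\mf,\Lf,\tau,\nu)$ replaced by $(g,\mg,\Lg,\sigma,1/\nu)$. Adding the two recovers $\|d\|_{\Psi_\textnormal{f}}^2 \le 2\langle d,e\rangle - \|e\|_{\Phi_{\tau,\sigma}^{-1}}^2$ with $\Psi_\textnormal{f} = \operatorname{diag}(\beta_x I_n,\beta_y I_m)$, i.e.\ \Atwo{}.

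The step I expect to be the main obstacle is the bound on $\|e\|_{\Phi_{\tau,\sigma}^{-1}}^2$: the preconditioner is non-diagonal, so this weighted norm does not split into primal and dual parts, and the argument hinges on the Young/AM--GM splitting through the free parameter $\nu$ to decouple it (at the cost of a conservative, $\nu$-dependent constant) — this is also precisely where the particular form of the step-size restrictions is needed, both to keep $c_x,c_y$ finite and positive and to keep the diagonal lower bound on $\Phi_{\tau,\sigma}$ invertible, modulo the benign $\Lf=0$ or $\Lg=0$ cases. Everything else is routine convex analysis and a short optimization over the range of $\|u\|$ and $\|v\|$.
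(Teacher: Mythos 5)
Your proposal is correct and follows essentially the same route as the paper's proof: the same expansion of $\|\mathcal F(\omega)-\mathcal F(\omega')\|_{\Phi_{\tau,\sigma}}^2$, the same interpolation (cocoercivity-plus-strong-monotonicity) inequality, and the same endpoint optimization over $\|u\|\in[\mf\|p\|,\Lf\|p\|]$ governed by the sign of $\tfrac{2}{\mf+\Lf}-c_x$. The only cosmetic difference is how the diagonal bound on the $\Phi_{\tau,\sigma}^{-1}$-norm is obtained — you use Young's inequality plus operator monotonicity of matrix inversion, while the paper runs a double Schur complement to bound $\Phi_{\tau,\sigma}^{-1}-\operatorname{diag}\bigl(\tfrac{2}{\Lf+\mf}I_n,\tfrac{2}{\Lg+\mg}I_m\bigr)$ directly — but both yield exactly $c_x=\tfrac{\tau}{1-\tau\|A\|\nu}$ and $c_y=\tfrac{\sigma}{1-\sigma\|A\|/\nu}$.
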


      \begin{theorem}[Contractivity of \Cref{alg:smoothForwardBackward}]
      	\label{thm:smoothAlgorithm}
    	Let $f: \R^n \rightarrow \R$ and $g: \R^m \rightarrow \R$ be proper, convex, closed and smooth functions. Let $\P$ be as in \eqref{eq:matrixPhiEta}, and let $\epsilon, \nu>0$ be arbitrary.  
    	Then, the fixed points of \Cref{alg:smoothForwardBackward} coincide with the solutions of  \eqref{eq:bilinearSaddlePointProblem}.  Furthermore:
    	\begin{itemize}
    		\item[(i)] If \Cone{} holds and $ \tau < \frac{2}{\Lf+ 2\|A\|\nu }$, $ \sigma < \frac{2}{\Lg+ 2\|A\| \frac{1}{\nu} }$,  then \Cref{alg:smoothForwardBackward} is contractive in  $\norm{\cdot}_{\Phi_{\tau,\sigma}}$ with rate
    		\begin{align*}
    			\rho = 
    			 1 - \min \left\{ \frac{\beta_x \tau }{1+\tau\|A\|\nu},  \frac{\beta_y \sigma }{1+\sigma\|A\|\frac{1}{\nu}} \right \}<1,
    		\end{align*}
    		    with $\beta_x$, $\beta_y$ as in \Cref{lemma:PDGforward};
    		\item[(ii)]
    		 If \Ctwo{} holds and $
    		 	 \tau \leq  \frac{2}{\Lf+ 2\|A\| \nu }$, $
    		 	 \sigma < \frac{2}{\Lg+ 2\|A\|  \frac{1}{
    		 			\nu}}$, 
     then \Cref{alg:smoothForwardBackward} is contractive in  $\norm{\cdot}_{\Phi_{\tau,\sigma}+\Psi_\textnormal{b}}$, $\Psi_\textnormal{b} = \diag \left(\frac{\mA}{\zet}I_n,\0 \right)$, with rate
    		\begin{align*}
    			\rho = \sqrt{1 - \frac{\min \{  \mA, \zet \beta_y \}}{\zet^2 +\mA \zet }}<1,
    		\end{align*}
             with $\beta_y$ as in \Cref{lemma:PDGforward}; and
    		\item[(iii)] If \Cthree{} holds and  $
    		 	 \tau \leq  \frac{2}{\Lf+ 2\|A\| \nu }$, $
    		 	 \sigma \leq \frac{2}{\Lg+ 2\|A\|  \frac{1}{
    		 			\nu}}$, $\tau\sigma \|A\|^2(1+\epsilon)^2 \leq 1 $, then \Cref{alg:smoothForwardBackward} is contractive in $\norm{\cdot}_{\Phi_{\tau,\sigma}}$ with rate
    		\begin{align*}
    			\rho =  \frac{  \zet }{\sqrt{\mu_A + \zet^2 }}<1 .
    		\end{align*}
    		The best rate is obtained with $\tau = \sigma = \min \left \{ \frac{1}{(1+\epsilon)\|A\|}, \frac{2}{\Lf+2\|A\| \bar \nu} \right\} $, where  $\bar{\nu} = \frac{\Lg-\Lf +\sqrt{(\Lf-\Lg)^2+ 16\|A\|^2}}{4\|A\|}  $. 
    	\end{itemize}
    	\end{theorem}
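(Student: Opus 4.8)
The plan is to instantiate the unified scheme of \Cref{sec:mainIdea}: identify \Cref{alg:smoothForwardBackward} with the preconditioned forward--backward iteration \eqref{eq:precFB} for the splitting $\Fb(\omega)=(A^\top y,-Ax)$, $\Ff(\omega)=(\nf(x),\ng(y))$ and $\P$ as in \eqref{eq:matrixPhiEta} (the derivation is in \Cref{sec:algderivation3}), and then invoke \Cref{prop:partialContractivity}. First I would settle the fixed-point claim: $\omega^\star$ is fixed for $\mathcal B\circ\mathcal F$ iff $\0\in\Phiinv\Ff(\omega^\star)+\Phiinv\Fb(\omega^\star)=\Phiinv F(\omega^\star)$, iff $\0\in F(\omega^\star)$ since $\P\succ0$ (which holds because $\tau\sigma\|A\|^2<1$ is implied by all the stated step-size bounds), i.e.\ iff $\omega^\star$ solves \eqref{eq:bilinearSaddlePointProblem}; existence and uniqueness of such $\omega^\star$ under any of \Cone{}, \Ctwo{}, \Cthree{} is \Cref{sec:uniquenessofprimaldual}. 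It then remains to exhibit, for each condition, matrices $\Psi_\textnormal b,\Psi_\textnormal f\succcurlyeq0$ and a scalar $\gamma\in(0,1]$ satisfying \Aone{}, \Atwo{} and $\Psi_\textnormal b+\Psi_\textnormal f\succcurlyeq\gamma(\P+\Psi_\textnormal b)$.

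Condition \Atwo{} is delivered directly by \Cref{lemma:PDGforward}, with $\Psi_\textnormal f=\diag(\beta_x I_n,\beta_y I_m)$, under all three cases, subject to the stated bounds on $\tau,\sigma$ ($\beta_x,\beta_y>0$ under \Cone{}; only $\beta_y>0$ under \Ctwo{}; $\beta_x=\beta_y=0$ under \Cthree{}). For \Aone{}, the backward operator $\Fb$ here is \emph{linear and skew-symmetric}, so $\mathcal B=(\Id+\Phiinv\Fb)^{-1}$ is firmly nonexpansive in $\|\cdot\|_{\P}$ and \Aone{} holds with $\Psi_\textnormal b=\0$; this already settles \Cone{}. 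Under \Ctwo{}, I would refine \Cref{lemma:semiContractionBackward} to the case $\mf=0$ (there is no $\partial f$ term in $\Fb$ now), which extracts from the $-Ax$ block the estimate \Aone{} with $\Psi_\textnormal b=\diag(\tfrac{\mA}{\zet}I_n,\0)$. Under \Cthree{}, invertibility of $A$ makes $\Fb$ inverse Lipschitz, $\|\Fb\omega-\Fb\omega'\|^2=\|A^\top(y-y')\|^2+\|A(x-x')\|^2\geq\mA\|\omega-\omega'\|^2$; combining this with the weighted-norm identity $\|\mathcal B\omega-\mathcal B\omega'\|_{\P}^2+\|\Fb(\mathcal B\omega-\mathcal B\omega')\|_{\Phiinv}^2=\|\omega-\omega'\|_{\P}^2$ (which follows, as in the proof of \Cref{prop:PropertiesOfResolvent}, from $(\P+\Fb)(\mathcal B\omega-\mathcal B\omega')=\P(\omega-\omega')$ together with $\langle\Fb v,v\rangle=0$) and the bound $\lambdamax{\P}\leq\zet$ yields \Aone{} with $\Psi_\textnormal b=\tfrac{\mA}{\zet^2}\P$.

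With \Aone{} and \Atwo{} in hand, I would conclude via \Cref{prop:partialContractivity}: the only remaining task is, for each condition, to compute the largest $\gamma$ with $\Psi_\textnormal b+\Psi_\textnormal f\succcurlyeq\gamma(\P+\Psi_\textnormal b)$, which gives contractivity in $\|\cdot\|_{\P+\Psi_\textnormal b}$. The subtlety is that $\P$ is not block-diagonal, so I would relax it by Young's inequality, $\P\preccurlyeq\diag\!\big((\tfrac1\tau+\|A\|\nu)I_n,\,(\tfrac1\sigma+\|A\|\tfrac1\nu)I_m\big)$ --- this is precisely where the free parameter $\nu>0$ of \Cref{lemma:PDGforward} enters, decoupling the matrix inequality into two scalar ones. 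Under \Cone{} ($\Psi_\textnormal b=\0$) this returns the rate through $\tfrac{\beta_x\tau}{1+\tau\|A\|\nu}$ and $\tfrac{\beta_y\sigma}{1+\sigma\|A\|/\nu}$; under \Ctwo{} ($\beta_x=0$) the rate through $\mA$ and $\zet\beta_y$; and under \Cthree{} ($\Psi_\textnormal f=\0$, $\Psi_\textnormal b=\tfrac{\mA}{\zet^2}\P$), since then $\P+\Psi_\textnormal b=(1+\tfrac{\mA}{\zet^2})\P$ is a scalar multiple of $\P$, one gets $\gamma=\mA/(\mA+\zet^2)$, $\rho=\zet/\sqrt{\mA+\zet^2}$, and contractivity in $\|\cdot\|_{\P}$. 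The optimal step sizes for part (iii) follow by minimizing $\zet$ over the admissible $(\tau,\sigma,\nu)$: taking $\tau=\sigma$ and balancing $\tau\leq\tfrac{2}{\Lf+2\|A\|\nu}$ against $\sigma\leq\tfrac{2}{\Lg+2\|A\|/\nu}$ forces $\nu=\bar\nu$, with $\tau$ then as large as $\tau\sigma\|A\|^2(1+\epsilon)^2\leq1$ permits.

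I expect the main obstacle to be the matrix-inequality bookkeeping of the last step: the contraction gains of $\mathcal F$ and of $\mathcal B$ live on complementary coordinate blocks (the dual block from $\ng$, the primal block from the $-Ax$ term under \Ctwo{} or from invertibility of $A$ under \Cthree{}), they must be recombined through the coupled, non-diagonal preconditioner $\P$, and one must tune $\nu$ so that the Young relaxation does not waste too much to reach the announced rates. A secondary care point is checking that the refined backward estimates survive the preconditioning by $\Phiinv$ --- handled by the weighted-norm identity for $\mathcal B$ above --- and that the step-size hypotheses of \Cref{lemma:PDGforward} and the condition $\P\succ0$ are simultaneously met in each case.
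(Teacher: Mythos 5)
Your proposal is correct and follows essentially the same route as the paper: the fixed-point identification via \Cref{sec:algderivation3}, condition \Atwo{} from \Cref{lemma:PDGforward}, condition \Aone{} from the specialized backward estimates (nonexpansiveness of $\mathcal B$ under \Cone{}, the $\mf=0$ specialization of \Cref{lemma:semiContractionBackward} under \Ctwo{}, and the $\sqrt{\mA}$-inverse-Lipschitz property of the skew part under \Cthree{}), all combined through \Cref{prop:partialContractivity}. The only packaging difference is that the paper routes items (i) and (iii) through the refined \Cref{prop:partialContractivityB} with scalar factors $\rho_\textnormal{f},\rho_\textnormal{b}$ (item (iii) via the weighted-norm resolvent contraction of \Cref{prop:PropertiesOfResolvent}), whereas you fold everything into the matrices $\Psi_\textnormal{f},\Psi_\textnormal{b}$ — e.g.\ $\Psi_\textnormal{b}=\frac{\mA}{\zet^2}\P$ in (iii) — which yields identical rates since $\P+\Psi_\textnormal{b}$ is then a scalar multiple of $\P$. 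One small mismatch to be aware of: in item (i) your argument delivers the rate $\sqrt{1-\min\{\cdot\}}$ rather than the stated $1-\min\{\cdot\}$, because \Cref{lemma:PDGforward} controls \emph{squared} norms; the paper's own proof has the same feature (it sets $\rho_\textnormal{f}=\rho$ where the lemma only justifies $\rho_\textnormal{f}=\sqrt{\rho}$, consistent with the sanity check $1-\gamma=(1-\tau\mf)^2$ at $A=\0$), so this is an inconsistency in the source rather than a gap in your proposal.
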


    For $A = \0$,  \Cref{thm:smoothAlgorithm} retrieves the usual step sizes bounds for the gradient method and, under \Cone{}, the usual contractivity rate $\rho = \max \{ |1-\tau\mf|, |1-\tau \Lf|, |1-\sigma\mg|, |1-\sigma \Lg| \}$ (see \Cref{prop:propertiesOfGradientStep} in Appendix~\ref{sec:proofPropertiesOfResolvent}).
     To the best of our knowledge, the work by \cite{kovalevAcceleratedPrimalDualGradient2022} is the only one to address linear convergence to solutions of \eqref{eq:bilinearSaddlePointProblem} under \Cthree{} (and no further assumptions). The accelerated gradient method in \citep{kovalevAcceleratedPrimalDualGradient2022} is optimal (i.e., it achieves the lower bound complexity) for the case that $f$ and $g$ are strongly convex, but not under \Ctwo{} or \Cthree{}. For example, when $f$ and $g$ are affine and $A$ is invertible, both our \Cref{thm:smoothAlgorithm} and  Theorem 1 in \cite{kovalevAcceleratedPrimalDualGradient2022} guarantee the suboptimal rate $\rho = 1- \Theta \left(\frac{\mu_A}{\|A\|^2} \right)$, where $\Theta(\cdot)$ denotes an asymptotic order, (for the optimal step sizes, and  $\epsilon=1$; as before, $\epsilon =0$ can be chosen if instead either $L_g \neq 0$ or $L_f \neq 0$).

\section{EXTENSIONS}
    \label{sec:conclusion}
    
   Our contractivity certificates pave the way for several extensions of our technical results. We conclude this paper by discussing some of these extensions. 
    
    To start, acceleration schemes (e.g., overrelaxation and momentum \citep{Iutzeler2019Acceleration}, Anderson acceleration \citep{Evans2020}) can directly be applied to \Cref{alg:chambollePock,alg:semiSmoothPrimalDual,alg:smoothForwardBackward}, given their interpretation as forward-backward (or proximal-point) methods and their contractivity. In some cases, improved theoretical rates can also be guaranteed \citep{Evans2020}. 

     In addition, contractivity implies important robustness properties, in case of inexact updates, delays, or problems with streaming data (see \Cref{sec:Propertiesofcontractivity}).  The operator-theoretic derivation also enables exploiting the extensive theory developed for nonexpansive and averaged operators \citep{bauschkeConvexAnalysisMonotone2017}, in particular in case of inexact relaxations \citep{Combettes2015Fejeran}.
    These observations should  facilitate the convergence analysis for  noisy and stochastic versions of our algorithms. We leave these generalizations for future work. 
    
    Our approach is not limited to the three algorithms studied nor to the forward-backward scheme in \eqref{eq:precFB}, but  can be tailored to several problem formulations and  serve as the basis for the development of fixed-point methods for many machine learning tasks. For example, we can show that the standard \gls{PDG} method in \eqref{eq:Gradientdescentascent} is contractive under \Ctwo{}; due to space limitations, we defer this result in \Cref{app:extensions}. The analysis is based on a weighted monotonicity argument, and it simplifies and extends the well-known work by \cite{duLinearConvergencePrimalDual2019}. In \Cref{app:extensions} we also design a novel algorithm to solve \eqref{eq:bilinearSaddlePointProblem}, that is contractive in the unweighted Euclidean norm.

    Finally, let us note that the novel inverse-Lipschitz properties proven in \Cref{lemma:IL1} and \Cref{lemma:IL2} are  algorithm independent, and  of general interest. We believe that the inverse-Lipschitz condition has not received sufficient attention in the saddle-point literature; it would be interesting   to study whether our techniques related to this property extend to  general (convex-concave) saddle-point problems with nonlinear coupling.

\bibliographystyle{apalike}
\bibliography{references}

\section*{Checklist}



\begin{enumerate}

 \item For all models and algorithms presented, check if you include:
 \begin{enumerate}
   \item A clear description of the mathematical setting, assumptions, algorithm, and/or model. [Yes]
   \item An analysis of the properties and complexity (time, space, sample size) of any algorithm. [Yes]
   \item (Optional) Anonymized source code, with specification of all dependencies, including external libraries. [Yes]
 \end{enumerate}

 \item For any theoretical claim, check if you include:
 \begin{enumerate}
   \item Statements of the full set of assumptions of all theoretical results. [Yes] Conditions \Cone{} to \Cthree{} and \Cref{thm:chambollePock,thm:semiSmoothAlgorithm,thm:smoothAlgorithm}.
   \item Complete proofs of all theoretical results. [Yes] See Appendix.
   \item Clear explanations of any assumptions. [Yes]     
 \end{enumerate}

 \item For all figures and tables that present empirical results, check if you include:
 \begin{enumerate}
   \item The code, data, and instructions needed to reproduce the main experimental results (either in the supplemental material or as a URL). [Not Applicable]
   \item All the training details (e.g., data splits, hyperparameters, how they were chosen). [Not Applicable]
         \item A clear definition of the specific measure or statistics and error bars (e.g., with respect to the random seed after running experiments multiple times). [Not Applicable]
         \item A description of the computing infrastructure used. (e.g., type of GPUs, internal cluster, or cloud provider). [Not Applicable]
 \end{enumerate}

 \item If you are using existing assets (e.g., code, data, models) or curating/releasing new assets, check if you include:
 \begin{enumerate}
   \item Citations of the creator If your work uses existing assets. [Not Applicable]
   \item The license information of the assets, if applicable. [Not Applicable]
   \item New assets either in the supplemental material or as a URL, if applicable. [Not Applicable]
   \item Information about consent from data providers/curators. [Not Applicable]
   \item Discussion of sensible content if applicable, e.g., personally identifiable information or offensive content. [Not Applicable]
 \end{enumerate}

 \item If you used crowdsourcing or conducted research with human subjects, check if you include:
 \begin{enumerate}
   \item The full text of instructions given to participants and screenshots. [Not Applicable]
   \item Descriptions of potential participant risks, with links to Institutional Review Board (IRB) approvals if applicable. [Not Applicable]
   \item The estimated hourly wage paid to participants and the total amount spent on participant compensation. [Not Applicable]
 \end{enumerate}

 \end{enumerate}

\onecolumn
\newpage
\appendix
\section{Background Material}


\subsection{Uniqueness of primal-dual solutions}
\label{sec:uniquenessofprimaldual}

    \begin{lemma}
    	Let $f$ and $g$ be proper, convex, and closed functions. If any of   \Cone{}, \Ctwo{}, or \Cthree{} hold, then the bilinear saddle point problem  \eqref{eq:bilinearSaddlePointProblem} has a unique solution $(x^\star,y^\star)$. 
    \end{lemma}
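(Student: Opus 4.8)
The plan is to derive uniqueness from the strong convexity of the primal problem \eqref{eq:primal}, since a strictly (in fact, strongly) convex function has at most one minimizer, and existence follows from coercivity. Concretely, I would first argue that under any of \Cone{}, \Ctwo{}, or \Cthree{}, the primal objective $h(x) \coloneqq f(x) + g^*(Ax)$ is $\mu$-strongly convex for some $\mu > 0$, and lower-semicontinuous, hence it admits a \emph{unique} minimizer $x^\star$. Then I would recover the dual variable: strong convexity of the primal is equivalent (by Fenchel--Rockafellar duality) to smoothness of the dual, so the dual solution set is a singleton $\{y^\star\}$ as well; alternatively, once $x^\star$ is fixed, the inner maximization $\max_y \{ y^\top A x^\star - g(y)\}$ is the conjugate $g^*(Ax^\star)$, and under each condition $g$ (or the relevant composite) forces this maximizer to be unique. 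Combining, $(x^\star, y^\star)$ is the unique saddle point.

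The three cases differ in \emph{why} $h$ is strongly convex, so I would treat them separately. Under \Cone{}, $f$ is $\mf$-strongly convex, so $h = f + g^*\circ A$ is $\mf$-strongly convex (the second term is convex), done; and $g$ being $\mg$-strongly convex makes $g^*$ be $\tfrac{1}{\mg}$-smooth, which together with $f$ strongly convex gives uniqueness of the dual via the optimality system $Ax^\star \in \partial g(y^\star)$, $-A^\top y^\star \in \partial f(x^\star)$. Under \Ctwo{}, $f$ need not be strongly convex, so the strong convexity of $h$ must come from the $g^*\circ A$ term: since $g$ is $\mg$-strongly convex and $\Lg$-smooth, $g^*$ is $\tfrac1\Lg$-strongly convex and $\tfrac1\mg$-smooth; then $g^*(Ax)$ is strongly convex along $\range(A^\top)$ with modulus $\tfrac{\mA}{\Lg}$ where $\mA = \lambdamin{A^\top A} > 0$ ensures this covers all of $\R^n$ (as $A^\top A \succ 0$), so $h$ is $\tfrac{\mA}{\Lg}$-strongly convex; for the dual, $\Lg$-smoothness of $g$ gives $g^* $ $\tfrac1\Lg$-strongly convex, and from $Ax^\star \in \partial g(y^\star) = \{\nabla g(y^\star)\}$ together with uniqueness of $x^\star$ one gets $\nabla g(y^\star) = Ax^\star$ fixed, and strong convexity of $g$ (injectivity of $\nabla g$) pins down $y^\star$. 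Under \Cthree{}, $f, g$ are merely smooth; here $n = m$ and $A$ is invertible (both $A^\top A$ and $AA^\top$ nonsingular). I would use the change of variables $z = Ax$: the primal becomes $\min_z f(A^{-1}z) + g^*(z)$; since $g$ is $\Lg$-smooth, $g^*$ is $\tfrac1{\Lg}$-strongly convex (using the convention $\tfrac10 = \infty$, $\Lg = 0$ would make $g$ affine and $g^*$ an indicator, still strictly convex on its domain — I would need to handle this boundary carefully, perhaps noting $g^*$ is at least strictly convex whenever $g$ is finite-valued smooth), and $f\circ A^{-1}$ is convex, so the composite is strictly convex in $z$, giving a unique $z^\star$ hence unique $x^\star = A^{-1}z^\star$; symmetrically, using $AA^\top \succ 0$, the dual problem $\min_y g(y) + f^*(-A^\top y)$ (or $\max$ form) is strictly convex in $y$, giving unique $y^\star$.

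A cleaner unified route, which I might prefer to present, is: the saddle-point operator $F$ in \eqref{eq:saddlePointOperator} is maximally monotone (sum of a maximally monotone subdifferential part and a bounded skew-symmetric linear map), and each of \Cone{}, \Ctwo{}, \Cthree{} implies $F$ is \emph{inverse Lipschitz} — this is exactly the content of \Cref{lemma:IL1} and \Cref{lemma:IL2} for \Ctwo{}, \Cthree{}, and under \Cone{} $F$ is even strongly monotone (hence inverse Lipschitz by the remark following \Cref{def:InverseLipschitz}). An inverse Lipschitz operator has at most one zero (stated in the excerpt right after \Cref{def:InverseLipschitz}), so \eqref{eq:bilinearSaddlePointProblem} has at most one solution. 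Existence then follows separately: under each condition the primal $h$ is strongly convex, hence coercive (a proper, l.s.c., strongly convex function is coercive and attains its minimum), so a primal minimizer exists, and the Fenchel--Rockafellar / KKT conditions — which hold here because, e.g., $\range(A) \cap \operatorname{int}\dom g^* \neq \emptyset$ or using that $\dom g^* = \R^m$ in the strongly-convex-$g$ cases, and the smoothness in \Cthree{} — yield a corresponding dual solution, so $\zer F \neq \emptyset$.

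The main obstacle I anticipate is the \textbf{boundary cases} in the smoothness-only setting \Cthree{}, where $\Lf$ or $\Lg$ may be zero: then $f$ or $g$ is affine, the corresponding conjugate is an indicator function (not strongly convex, only convex), and strict convexity of the primal composite is not immediate from a naive modulus count — one must instead lean on the invertibility of $A$ more carefully, e.g. arguing that $f(A^{-1}z) + g^*(z)$ is strictly convex because the second term's domain is an affine subspace on which the first term is strictly... no, that fails too if $f$ is also affine. In the fully affine-$f$-and-$g$ case the problem is $\min_x\max_y b_1^\top x + y^\top A x - b_2^\top y$ (plus constants), whose unique saddle point is the unique solution of the linear system $A^\top y = -b_1$, $A x = b_2$ — unique precisely because $A$ is invertible. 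So the cleanest fix is to invoke the inverse-Lipschitz route (\Cref{lemma:IL2}) for at-most-one, which already handles $\Lf = \Lg = 0$ uniformly, and treat existence via the explicit linear system / coercivity argument case by case. I would therefore structure the proof as: (1) at most one solution, via strong monotonicity (under \Cone{}) or \Cref{lemma:IL1,lemma:IL2} (under \Ctwo{}, \Cthree{}) plus the "inverse Lipschitz $\Rightarrow$ at most one zero" fact; (2) existence of a solution, via strong convexity and coercivity of the primal plus a qualification condition ensuring a dual multiplier exists, with the affine sub-case of \Cthree{} handled directly through invertibility of $A$.
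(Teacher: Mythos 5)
Your proposal is correct, and its final structure differs from the paper's in the uniqueness half while coinciding in the existence half. The paper proves existence exactly as you do: it checks that the primal \eqref{eq:primal} is proper and strongly convex under each of \Cone{}, \Ctwo{}, \Cthree{} (strong convexity of $f$ under \Cone{}, of $g^*\circ A$ under \Ctwo{} and \Cthree{}), verifies the qualification $\dom(g^*)\cap A(\dom f)\neq\varnothing$ ($\dom g^*=\R^m$ under \Cone{}/\Ctwo{}, $A(\dom f)=\R^n$ under \Cthree{}), and invokes Fenchel--Rockafellar duality to produce a primal--dual pair solving $\0\in F(x^\star,y^\star)$. For uniqueness, however, the paper does \emph{not} use the inverse-Lipschitz machinery: it gets uniqueness of $x^\star$ from primal strong convexity and then pins down $y^\star$ case by case from the optimality system (strong monotonicity of $\partial g$ under \Cone{}/\Ctwo{} makes $\0\in\partial g(y^\star)-Ax^\star$ uniquely solvable; invertibility of $A$ makes $\0\in\nabla f(x^\star)+A^\top y^\star$ uniquely solvable under \Cthree{}). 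Your route---strong monotonicity of $F$ under \Cone{}, and \Cref{lemma:IL1}, \Cref{lemma:IL2} under \Ctwo{}, \Cthree{}, combined with the fact that an inverse Lipschitz operator has at most one zero---is also valid and non-circular (those lemmas are proved independently), and it handles the affine boundary cases of \Cthree{} uniformly; the price is that it leans on the heavier Lemmas~\ref{lemma:IL1}--\ref{lemma:IL2}, whereas the paper's case analysis is elementary. One remark: your worry about the fully affine sub-case of \Cthree{} in the ``primal strong convexity'' route is unfounded---with the convention $\tfrac{1}{0}=+\infty$, an affine $g$ has conjugate equal to the indicator of a singleton, which is strongly convex in the extended-value sense, so $g^*\circ A$ (and hence $h$) remains strongly convex and the unique-minimizer argument goes through without any special treatment.
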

    
    \begin{proof}
        We recall some properties of the  Fenchel conjugate  $g^*(v) = \sup \{ \langle v,y \rangle -g(y): y \in \R^m \}$ of a proper convex closed function $g$ \cite[Ch.13, Th.~18.15]{bauschkeConvexAnalysisMonotone2017}:  $g^*$ is proper convex closed; if $g$ is strongly convex, $g^*$ is smooth (hence, $\dom(g) = \R^m$); if $g$ is smooth, $g^*$ is strongly convex; $\partial g^* = (\partial g)^{-1}$.  Then, under any of the conditions \Cone{}, \Ctwo{}, or \Cthree{}, the primal problem \eqref{eq:primal} has nonempty domain and is strongly convex (under \Cone{}, $f$ is strongly convex; under \Ctwo{} and \Cthree{}, $g^*\circ A$ is), hence it admits a unique solution $x^\star \in \R^n$. Furthermore, under  either condition \Cone{}, \Ctwo{}, or \Cthree{}, we have that $\dom (g^*) \cap A(\dom (f)) \neq \varnothing$ (under \Cone{} or \Ctwo{},   $\dom(g^*) = \R^m$ and $f$ is proper; under \Cthree{}, $A(\dom f) = \R^n$ and $g^*$ is proper). Therefore, we can apply Theorem~19.1 in \cite {bauschkeConvexAnalysisMonotone2017}; in particular, strong duality holds, and   the solution to \eqref{eq:primal} is the unique vector $x^\star$ for which there exists $y^\star\in \R^m$ such that \cite[Theorem~19.1(ii)]{bauschkeConvexAnalysisMonotone2017}
    	\begin{align}
        \label{eq:equations}
    		 \0_{n+m}  \in F(x^\star,y^\star) \coloneqq \begin{bmatrix}
    		 	\partial f(x^\star) + A^\top y^\star \\
    		 	\partial g(y^\star ) - Ax^\star
    		 	\end{bmatrix},
    	\end{align}
         namely, such that $(x^\star,y^\star)$ solves \eqref{eq:bilinearSaddlePointProblem}. Hence, a solution $(x^\star,y^\star )$ to \eqref{eq:bilinearSaddlePointProblem} must exist. Furthermore, the dual solution $y^\star$ is also unique, as, given the solution $x^\star$ of \eqref{eq:primal}: if \Cone{} or \Ctwo{} hold, then $\partial g$ is strongly monotone, hence the second inclusion in \eqref{eq:equations}, $\0 \in \partial g(y^\star) - Ax^\star$, has a unique solution  $y^\star$ \cite[Ex.~22.12]{bauschkeConvexAnalysisMonotone2017}; if \Cthree{} holds, then the first inclusion in  \eqref{eq:equations}, $\0 \in \nabla f(x^\star) + A^\top y^\star$, has a unique solution $y^\star$, since $A$ is invertible.
    \end{proof}

That the conditions \Cone{}, 
\Ctwo{}, \Cthree{} are tight (i.e., that uniqueness of primal-dual solutions cannot be ensured if any of the subconditions is relaxed) can be verified via simple counterexamples. Consider the following  cases:
\begin{enumerate}
    \item[(I)] $n =1$, $m=2$,  $f(x)=0$, $g(y) = 0$, $A = \begin{bsmallmatrix}
        0 \\ 1 
    \end{bsmallmatrix} $; 
    \item[(II)] $n=m=1$, $f(x) = 0$, $g = \iota_{[0,1]}(y)+y^2$, $A = 1 $ (where $\iota_{[0,1]}$ is the indicator function of the interval $[0,1]$); \item[(III)] $n=m=1$, $f(x) = 0$, $g(y)= y^2$, $A  =0$.
    \end{enumerate}
Recall the subconditions: \Cone{}(a): $f$ is strongly convex (symmetrically, \Cone{}(b): $g$ is strongly convex); \Ctwo{}(a) $g$ is strongly convex; \Ctwo(b): $g$ is smooth; \Ctwo(c) $A$ is full column rank; \Cthree{}(a):  $g$ is smooth (symmetrically, \Cthree{}(b) $f$ is smooth);  \Cthree{}(c): $A$ is full row rank (symmetrically, \Cthree(d): $A$ is full column rank). 

Case (III) violates \Cone{}(a) (but satisfies the other subconditions in \Cone{}). Case (I) violates  \Ctwo{}(a). Case (II) violates  \Ctwo{}(b). Case (III) violates  \Ctwo(c). Case (II) violates \Cthree(a). Case (I)  violates \Cthree(c).

It is easy to see that in all cases (I), (II), and (III), multiple primal-dual solutions exist for the inclusion in \eqref{eq:equations}, hence for the saddle point problem in \eqref{eq:bilinearSaddlePointProblem} (in which case, contractivity for \emph{any} algorithm is excluded). Therefore, conditions \Cone{}, \Ctwo{}, \Cthree{} are tight.

\subsection{Contractivity versus  Q-linear convergence}\label{sec:Propertiesofcontractivity}

We illustrate the difference between contractivity and Q-linear convergence. Let us consider an iteration $\omega^{k+1} = \mathcal{A}(\omega^k)$, where $\mathcal{A}:\R^q\rightarrow \R^q$ has a unique fixed point, i.e., there is a unique $\omega^\star\in\R^q$ such that $\mathcal{A}(\omega^\star) = \omega^\star$.  
 The iteration is contractive (in $\| \cdot\|)$  with rate $0 < \rho <1$ if, for all $\omega,\omega' \in \R^q$, it holds that $\|\mathcal A(\omega)- \mathcal A(w') \| \leq \rho  \|\omega - \omega ' \| $,  namely any two trajectories of the algorithm converge linearly to each other. The iteration is globally Q-linearly convergent with rate $0 < \rho <1$ if, for all $\omega \in \R^q$, it holds that $ \|\mathcal A(\omega)- \mathcal A(w^\star) \| \leq \rho  \|\omega - \omega^\star \|$, therefore the iterates converge geometrically to $\omega^\star$.
 
 Clearly, contractivity implies Q-linear convergence, by taking $\omega' = \omega^\star$. 
 Conversely, there are many iterations that are Q-linearly convergent, but not contractive. For instance, the forward method for restricted strongly monotone operators \cite[Theorem~7]{Tatarenko2021}, or the scalar iteration
\begin{align}\label{ex:contractivity}
    \omega^{k+1} = \mathcal{A} (\omega^k) =  \rho \operatorname{sin}(\omega) \omega,
\end{align}
with $0<\rho<1$,which converges Q-linearly to $\omega^\star = 0$ with rate $\rho$, but  is not contractive. The gap between contractivity and Q-linear convergence is significant in terms of properties, proof strategy, and practical relevance. 

\emph{Properties}: Contractive algorithms enjoy superior robustness properties compared to Q-linear convergent iterations. For instance, contractivity ensures algorithm stability, in the sense that small changes in the initialization result in small changes in the execution path. This is not the case for Q-linearly convergent algorithms, as it can be checked on \eqref{ex:contractivity}. As another example, consider the perturbed iteration $\omega^{k+1}= \mathcal{A}(\omega^k) +d(\omega^k)$. If $\mathcal A$ is $\rho$ contractive,  the error $d(\omega)$ is $L_d$-Lipschitz, and $\rho +L_d<1$, then the perturbed iteration is still contractive, so it converges to a (perturbed) fixed point. This robustness cannot be guaranteed for Q-linearly convergent iterations. For instance, consider the perturbed version of \eqref{ex:contractivity},
$\omega^{k+1} = \mathcal{A} (\omega^k) +\bar d =  \rho \operatorname{sin}(\omega^k) \omega^k + \bar d$, where $\bar d$ is a constant independent of $\omega^k$ (so $L_d =0)$. 
 For large enough $\bar d>0$, the iteration has multiple fixed-points and it fails to converge, oscillating indefinitely. This would not happen for a contractive iteration.
Finally, an important property of contractive iterations is modularity: the composition of contractive operators is contractive.  At the contrary, if the iterations $\omega^{k+1} = \mathcal{A} (\omega^k)$ and $\omega^{k+1} = \mathcal{B} (\omega^k)$ are only Q-linearly convergent, the iteration $\omega^{k+1} = \mathcal A \circ \mathcal B (\omega^k)$ needs not converge. 

\emph{Proof strategy}: Proving contractivity is generally more involved than proving  Q-linear convergence. One reason is that to show $|| \mathcal{A}(w) - \mathcal{A}(w^\star) ||\leq \rho || \omega - \omega^\star || $,  one can exploit the fact that $\omega^\star$ is a fixed point of $\mathcal A$. Of course, it is not always possible to extend Q-linear convergence arguments to  contractivity, as there are algorithms that are Q-linearly convergent but not contractive, such as \eqref{ex:contractivity}. More importantly, even if an iteration is contractive, a Lyapunov function that proves
Q-linear convergence for an algorithm may not be suitable for demonstrating its
contractivity, potentially requiring a different Lyapunov function. 

\emph{Practical relevance}: 
Because of its superior properties, contractivity plays an important role when an algorithm is used as a basis 
for the development of more complex methods. The robustness properties of contractive algorithms  can be pivotal in studying perturbed iterations, or time-varying problems, such as those  in  learning with data streams; see the recent work  by  \cite{davydov2025timevaryingconvexoptimizationcontraction} for a continuous-time perspective. As another example, \cite{Scutari_Unified} exploit the modularity of contractive iterations (specifically, of gradient descent) for the analysis of composite methods in distributed optimization \cite[7, Prop. 9]{Scutari_Unified}; this derivation would not be possible based on Q-linear convergence guarantees alone. 
Finally, contractivity is required to guarantee convergence of  general modification schemes, such as Anderson acceleration \cite[Asm.~3.2]{Evans2020}.

    \subsection{Properties of $\Phi_{\tau, \sigma}$}
        \label{sec:propertiesPhi}
          \begin{proposition}
                \label{prop:propertiesPhi}
               Let $\Phi_{\tau, \sigma} = \begin{bmatrix}
     		     \frac{1}{\tau}I_n &-A^\top \\
     		     -A & \frac{1}{\sigma}I_m
                \end{bmatrix} $ as in \eqref{eq:matrixPhiEta}. Then:
                \begin{itemize}
                        \item[i)] $\Phi_{\tau, \sigma}$ is symmetric and positive definite if $\tau \sigma \norm{A}^2< 1$.
                        \item[ii)]  If $\tau \sigma \norm{A}^2< 1$, the maximum eigenvalue of $\P$ can then be upper bounded as follows
                        \begin{equation*}
                                \lambdamax{\Phi_{\tau, \sigma}} = \norm{\Phi_{\tau, \sigma}} \leq \max \left \{ \frac{1}{\tau}, \frac{1}{\sigma}\right \}+ \norm{A} \eqqcolon \zet{}.
                            \end{equation*}
                    \end{itemize}
            \end{proposition}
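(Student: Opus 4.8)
The plan is to prove the two items separately, both by elementary linear-algebra arguments. For \emph{(i)}, symmetry of $\P$ is immediate from the block form in \eqref{eq:matrixPhiEta}. For positive definiteness I would expand the associated quadratic form: for any $\omega = (u,v) \in \R^n \times \R^m$,
\[
\innerproduct{\omega}{\P\omega} = \tfrac{1}{\tau}\norm{u}^2 - 2\innerproduct{Au}{v} + \tfrac{1}{\sigma}\norm{v}^2 ,
\]
and then bound the cross term by Young's inequality, $2\innerproduct{Au}{v} \le \tfrac{\norm{A}^2}{c}\norm{u}^2 + c\,\norm{v}^2$ for an arbitrary $c>0$ (using $\norm{Au}\le\norm{A}\norm{u}$), obtaining $\innerproduct{\omega}{\P\omega} \ge \big(\tfrac{1}{\tau} - \tfrac{\norm{A}^2}{c}\big)\norm{u}^2 + \big(\tfrac{1}{\sigma} - c\big)\norm{v}^2$. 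The hypothesis $\tau\sigma\norm{A}^2 < 1$ is exactly what permits choosing $c \in \big(\tau\norm{A}^2,\ \tfrac{1}{\sigma}\big)$, which makes both coefficients strictly positive; hence $\innerproduct{\omega}{\P\omega} > 0$ for every $\omega \neq \0$ (if $u=\0$ or $v=\0$ one of the two terms alone is already positive). An equivalent route is the Schur complement: since $\tfrac{1}{\tau}I_n \succ 0$, one has $\P \succ 0$ iff the Schur complement $\tfrac{1}{\sigma}I_m - \tau\, A A^\top$ is positive definite, i.e.\ iff $\tau\sigma\,\lambdamax{A A^\top} = \tau\sigma\norm{A}^2 < 1$.

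For \emph{(ii)}, since by \emph{(i)} the matrix $\P$ is symmetric and positive definite, $\norm{\P} = \lambdamax{\P}$. I would then split $\P$ as the sum of $D \coloneqq \diag\big(\tfrac{1}{\tau}I_n,\ \tfrac{1}{\sigma}I_m\big)$ and $N \coloneqq \begin{bsmallmatrix} 0 & -A^\top \\ -A & 0 \end{bsmallmatrix}$, and use subadditivity of the spectral norm, $\norm{\P} \le \norm{D} + \norm{N}$. Clearly $\norm{D} = \max\{\tfrac{1}{\tau},\tfrac{1}{\sigma}\}$. For $N$, applying it to $(u,v)$ gives $\norm{N(u,v)}^2 = \norm{A^\top v}^2 + \norm{Au}^2 \le \norm{A}^2(\norm{u}^2 + \norm{v}^2)$, so $\norm{N} \le \norm{A}$; combining the two bounds yields $\norm{\P} = \lambdamax{\P} \le \max\{\tfrac{1}{\tau},\tfrac{1}{\sigma}\} + \norm{A} = \zet$, as claimed.

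There is no real obstacle here; the whole argument is routine. The only point deserving a little care is verifying that the choice of the free parameter $c$ in the Young step (equivalently, the strict inequality in the Schur-complement condition) is feasible \emph{precisely} under the hypothesis $\tau\sigma\norm{A}^2 < 1$, and that the resulting lower bound on the quadratic form is strict for all nonzero $\omega$ — which follows automatically once both coefficients are shown to be positive.
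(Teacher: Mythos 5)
Your proposal is correct and matches the paper's argument: item (i) is proved there by exactly the Schur-complement criterion you give as the "equivalent route," and item (ii) by the same splitting $\Phi_{\tau,\sigma}=D+N$ with subadditivity of the spectral norm and $\|N\|=\|A\|$. Your Young-inequality expansion of the quadratic form is a fine elementary alternative for (i), but it establishes nothing beyond what the Schur complement already gives.
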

        \begin{proof}
             \textit{Proof of item i)}. 
             By Schur complement, $\P$ is positive definite if and only if $\frac{1}{\tau}I_n - \sigma A^\top A \succ 0 $, which is implied by $\tau \sigma \|A\|^2 <1.$
            
            \textit{Proof of item ii)}. Since $\Phi_{\tau, \sigma}$ is symmetric and positive semidefinite for $\tau \sigma \norm{A}^2 \leq 1$, we have
            \begin{IEEEeqnarray*}{rCl}
                \lambdamax{\Phi_{\tau,\sigma}} &=& \norm{\Phi_{\tau,\sigma}}\\
                & = & \norm{\begin{bmatrix}
                    \frac{1}{\tau}I_n &0\\
                    0 &\frac{1}{\sigma}I_m
                \end{bmatrix} + \begin{bmatrix}
                    0 & -A^\top \\
                    -A &0
                \end{bmatrix}}\\
                &\leq& \max \left \{ \frac{1}{\tau}, \frac{1}{\sigma}\right \}+\sqrt{\lambdamax{\begin{bmatrix}
                    A^\top A &0\\
                    0 &AA^\top 
                \end{bmatrix}}}\\
                & = & \max \left \{ \frac{1}{\tau}, \frac{1}{\sigma}\right \}+\sqrt{\lambdamax{AA^\top }}\\
                & = & \max \left \{ \frac{1}{\tau}, \frac{1}{\sigma}\right \}+\norm{A}.
            \end{IEEEeqnarray*} 
        \end{proof}

\subsection{Properties of inverse Lipschitz operators}\label{sec:PropertiesOfInverseLipschitz}

\begin{lemma}
    Let $\mathcal{A} : \R^q  \rightrightarrows \R^q$ be $\frac{1}{L}$-inverse Lipschitz, i.e., $\norm{u-u'} \geq \frac{1}{L}\norm{\omega-\omega'}$  for all $(\omega,u), (\omega',u') \in \gra(\mathcal{A})$. Then $\mathcal{A}$ has at most one zero, i.e., there exists at most one $\omega^\star \in \R^q$ such that $\0 \in \mathcal{A}(\omega^\star)$. 
\end{lemma}

\begin{proof}
    The  definition of inverse Lipschitz implies that, if    
    $(\omega,\0), (\omega',\0) \in \gra(\mathcal{A})$,  then $ \| \0 -\0 \|  \geq \frac{1}{L}\norm{\omega-\omega'}$, and hence $\omega = \omega '$.
\end{proof}

\begin{lemma}
    Let $\mathcal{A} : \R^q  \rightrightarrows \R^q$ be $\mu$-strongly monotone. Then, $\mathcal{A}$ is $\mu$-inverse Lipschitz. 
\end{lemma}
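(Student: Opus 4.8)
The plan is to combine the defining inequality of strong monotonicity with the Cauchy--Schwarz inequality, which immediately produces the inverse-Lipschitz bound. First I would fix two arbitrary pairs $(\omega,u),(\omega',u') \in \gra(\mathcal A)$. By $\mu$-strong monotonicity,
\[
\mu \|\omega - \omega'\|^2 \le \langle u - u', \omega - \omega' \rangle,
\]
and by Cauchy--Schwarz the right-hand side is bounded above by $\|u - u'\|\,\|\omega - \omega'\|$. Chaining these two inequalities yields $\mu \|\omega - \omega'\|^2 \le \|u - u'\|\,\|\omega - \omega'\|$.

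From here the proof splits into the trivial degenerate case and the generic case. If $\omega = \omega'$, then the desired inequality $\|u - u'\| \ge \mu \|\omega - \omega'\|$ holds automatically because its right-hand side is zero. Otherwise $\|\omega - \omega'\| > 0$, and dividing the chained inequality through by $\|\omega - \omega'\|$ gives $\|u - u'\| \ge \mu \|\omega - \omega'\|$ for all $(\omega,u),(\omega',u') \in \gra(\mathcal A)$. Comparing with \Cref{def:InverseLipschitz} under the identification $1/L = \mu$ (equivalently $L = 1/\mu$), this is exactly the statement that $\mathcal A$ is $\mu$-inverse Lipschitz, completing the argument.

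There is essentially no obstacle in this proof: it is a one-line application of Cauchy--Schwarz to the strong monotonicity estimate. The only minor point requiring care is the separate treatment of the degenerate case $\omega = \omega'$ before dividing by $\|\omega - \omega'\|$, and being careful that the resulting constant lines up with the convention in \Cref{def:InverseLipschitz} (where ``$\tfrac1L$-inverse Lipschitz'' means $\|u-u'\|\ge \tfrac1L\|\omega-\omega'\|$), so that $\mu$-strong monotonicity gives precisely $\mu$-inverse Lipschitzness rather than, say, $\tfrac1\mu$.
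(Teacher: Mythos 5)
Your proposal is correct and follows exactly the paper's argument: apply the strong monotonicity inequality, bound the inner product by Cauchy--Schwarz, and divide by $\|\omega-\omega'\|$ to obtain $\|u-u'\|\ge\mu\|\omega-\omega'\|$. The only addition is the explicit handling of the degenerate case $\omega=\omega'$, which the paper leaves implicit; this is a harmless refinement, not a different route.
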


\begin{proof}
    By definition of strong monotonicity and the Cauchy--Schwartz inequality, for any $(\omega,u),(\omega',u')\in\gra(\mathcal{A})$,  $\mu\|\omega-\omega'\|^{2} \leq  \langle u-u' \mid \omega-\omega'\rangle  \leq  \| u-u' \| \| \omega-\omega'\ \|$, and hence $\|u -u'\| \geq \mu \| \omega - \omega'\|$.
\end{proof}

    \section{Algorithms derivation}
        \label{sec:algorithmsderivation}
        The  three algorithms \eqref{eq:ChambollePock1}, \eqref{eq:semiimplicit1} and \eqref{eq:explicit1}, are derived from forward-backward splitting  in \eqref{eq:precFB}. The preconditioning matrix $\Phi_{\tau, \sigma}$ is defined in \eqref{eq:matrixPhiEta}, and we assume throughout that $\tau \sigma \|A\|^2 < 1$, so that $\P \succ 0$. The forward-backward iteration in \eqref{eq:precFB} can thus be reformulated as follows
        \begin{IEEEeqnarray}{LrCl}
            \label{eq:precFBReformulated}
            & \omega^{k+1} & = & (\Id + \Phi_{\tau, \sigma}^{-1}\Fb)^{-1} \circ (\Id-\Phi_{\tau, \sigma}^{-1} \Ff) (\omega^k) \nonumber\\
            \overset{(i)}{\Leftrightarrow} & \left (\Id + \Phi_{\tau, \sigma}^{-1}\Fb \right )(\omega^{k+1}) & \ni & \left ( \Id - \Phi_{\tau, \sigma}^{-1} \Ff \right ) (\omega^k) \nonumber\\
            \overset{(ii)}{\Leftrightarrow} & \left (\Phi_{\tau, \sigma}+ \Fb \right ) (\omega^{k+1}) & \ni & \left ( \Phi_{\tau, \sigma}- \Ff \right )(\omega^k),
        \end{IEEEeqnarray}
        where (i) follows from the definition of  inverse operator, and (ii) holds since $\Phi_{\tau, \sigma}$ is invertible. 

        \begin{remark}
    For either of \Cref{alg:chambollePock,alg:semiSmoothPrimalDual,alg:smoothForwardBackward}, the operators $F_\textnormal{f}$ and $F_\textnormal{b}$ are maximally monotone \citep[Cor.~25.5]{bauschkeConvexAnalysisMonotone2017}, as they all can be written as the sum of the subdifferential of a convex function (which is maximally monotone) and a linear skew symmetric operator (which is maximally monotone and with full domain). In particular, this implies that the backward step $\mathcal B = (\operatorname{Id}+\Phi_{\sigma,\tau}^{-1} F_\textnormal{b})^{-1}$ is always single-valued and has full domain  \cite[Prop.~20.14, Prop.~23.8]{bauschkeConvexAnalysisMonotone2017}, even though $F_\textnormal{b}$ might be set-valued (hence the ``$\ni$'' rather than ``$=$'' sign in the first equivalence above).
\end{remark}

        \subsection{\Cref{alg:chambollePock} (Equation \eqref{eq:ChambollePock1})}
        \label{sec:algderivation1}
           We choose $\Ff = \0$, $\Fb = F$ as described in \Cref{sec:chambollePock}. Explicitly writing \eqref{eq:precFBReformulated} yields the following:
           \begin{IEEEeqnarray*}{LrCl}
                & \left (\Phi_{\tau, \sigma}+ \Fb \right ) (\omega^{k+1}) & \ni & \left ( \Phi_{\tau, \sigma}- \Ff \right )(\omega^k)\\
                \Leftrightarrow & \begin{bmatrix}
                    \frac{1}{\tau}x^{k+1}+\partial f(x^{k+1}) + \cancel{A^\top y^{k+1}}- \cancel{A^\top y^{k+1}}\\
                    \frac{1}{\sigma}y^{k+1} -Ax^{k+1}-Ax^{k+1}\\
                \end{bmatrix} &\ni&  \begin{bmatrix}
                    \frac{1}{\tau}x^k-A^\top  y^k\\
                    \frac{1}{\sigma}y^k - Ax^k
                \end{bmatrix} 
                \label{eq:inclusioncancel}
                \\
                \Leftrightarrow & \begin{bmatrix}
                     x^{k+1} + \tau \partial f(x^{k+1})\\
                      y^{k+1} + \sigma \partial g(y^{k+1})
                \end{bmatrix} & \ni & \begin{bmatrix}
                    x^k - \tau A^\top y^k\\
                    y^k + \sigma A\left ( 2x^{k+1}- x^k \right)
                \end{bmatrix}\\
                \Leftrightarrow & \begin{bmatrix}
                    x^{k+1}\\
                    y^{k+1}
                \end{bmatrix} & = & \begin{bmatrix}
                    \operatorname{prox}_{\tau f}\left( x^k - \tau A^\top y^k \right)\\
                    \operatorname{prox}_{\sigma g}\left ( y^k + \sigma A\left ( 2x^{k+1}- x^k \right) \right),
                \end{bmatrix}
           \end{IEEEeqnarray*}
            where the last equivalence follows by definition of $\operatorname{prox}$ operator (we recall that for a proper closed convex function $\psi$, $\operatorname{prox}_\phi = (\operatorname{Id}+ \partial \psi)^{-1}$ is single valued,  see \eqref{eq:prox}, although $\partial \psi$ might not be; hence ``$=$'' rather than ``$\in$'' is used in the last line).

        \begin{remark}[Design rationale]
        Informally speaking, convergence of the forward-backward method \eqref{eq:precFB} typically requires only monotonicity for the operator $F_\textnormal{b}$, but a stronger ``cocoercivity'' condition for the operator $F_\textnormal{f}$ \cite[Th.~26.14]{bauschkeConvexAnalysisMonotone2017} (cocoercivity is for instance satisfied by gradients of smooth functions). For this reason, to ensure convergence even without strong convexity assumptions, we always place the skew symmetric operator $(A^\top y, -Ax)$, which is not cocoercive, in the backward step. On the downside, this complicates the computation of  $\mathcal B$. To remedy,  the preconditioning matrix $\P$ is designed to make the system of inclusions above block triangular, i.e., to remove the term $A^\top y^{k+1}$ in the first inclusion. In this way, $x^{k+1}$ does not depend on $y^{k+1}$. This ensures that the resulting iteration can be computed efficiently (provided that the proximal operators for the functions $f$ and $g$ are available). 
        \end{remark}
        \subsection{\Cref{alg:semiSmoothPrimalDual} (Equation \eqref{eq:semiimplicit1}) } \label{sec:algderivation2}
            We choose $\Ff(\omega) = (\0, \nabla g(y))$ and $\Fb(\omega) = (\partial f(x)+A^\top y, -Ax)$ as described in \Cref{sec:semiSmoothPrimalDual}. Then, we have from \eqref{eq:precFBReformulated}:
            \begin{IEEEeqnarray*}{LrCl}
                 & \left (\Phi_{\tau, \sigma}+ \Fb \right ) (\omega^{k+1}) & \ni & \left ( \Phi_{\tau, \sigma}- \Ff \right )(\omega^k)\\
                 \Leftrightarrow & \begin{bmatrix}
                    \frac{1}{\tau}x^{k+1}+\partial f(x^{k+1}) + \cancel{A^\top y^{k+1}}- \cancel{A^\top y^{k+1}}\\
                    \frac{1}{\sigma}y^{k+1} -Ax^{k+1}-Ax^{k+1}\\
                \end{bmatrix} &\ni&  \begin{bmatrix}
                    \frac{1}{\tau}x^k-A^\top  y^k\\
                    \frac{1}{\sigma}y^k - Ax^k-\nabla g(y^k)
                \end{bmatrix}\\
                \Leftrightarrow & \begin{bmatrix}
                      x^{k+1} + \tau \partial f(x^{k+1})\\
                    y^{k+1}
                \end{bmatrix} & \ni & \begin{bmatrix}
                      x^k - \tau A^\top y^k\\
                      y^k - \sigma \left (\nabla g(y^k) - A \left (2x^{k+1}-x^{k} \right ) \right )
                \end{bmatrix}\\
                 \Leftrightarrow & \begin{bmatrix}
                    x^{k+1}\\
                    y^{k+1}
                \end{bmatrix} & = & \begin{bmatrix}
                     \operatorname{prox}_{\tau f}\left(x^k - \tau A^\top y^k \right)\\
                     y^k - \sigma \left (\nabla g(y^k) - A \left (2x^{k+1}-x^{k} \right ) \right ),
                \end{bmatrix}
            \end{IEEEeqnarray*}
            which concludes the derivation of \Cref{alg:semiSmoothPrimalDual}.

        \subsection{\Cref{alg:smoothForwardBackward} (Equation \eqref{eq:explicit1}) } \label{sec:algderivation3}
            We choose $\Fb(\omega) = (A^\top y, -Ax)$ and $\Ff(\omega) = (\nabla f(x), \nabla g(y))$ as given in \Cref{sec:smoothPrimalDual}. From \eqref{eq:precFBReformulated}, we have
              \begin{IEEEeqnarray*}{LrCl}
                 & \left (\Phi_{\tau, \sigma}+ \Fb \right ) (\omega^{k+1}) & \ni & \left ( \Phi_{\tau, \sigma}- \Ff \right )(\omega^k)\\
                 \Leftrightarrow  & \begin{bmatrix}
                    \frac{1}{\tau}x^{k+1} + \cancel{A^\top y^{k+1}}- \cancel{A^\top y^{k+1}}\\
                    \frac{1}{\sigma}y^{k+1} -Ax^{k+1}-Ax^{k+1}\\
                \end{bmatrix} &=&  \begin{bmatrix}
                    \frac{1}{\tau}x^k-A^\top  y^k-\nabla f(x^k)\\
                    \frac{1}{\sigma}y^k - Ax^k-\nabla g(y^k)
                \end{bmatrix}\\
                \Leftrightarrow & \begin{bmatrix}
                      x^{k+1}\\
                    y^{k+1}
                \end{bmatrix} & = & \begin{bmatrix}
                      x^k - \tau \left ( \nabla f(x^k) + A^\top y^k \right )\\
                      y^k - \sigma \left (\nabla g(y^k) - A \left (2x^{k+1}-x^{k} \right ) \right ),
                \end{bmatrix}
            \end{IEEEeqnarray*}
            where in the first equivalence we have equality since all operators are single-valued.

\section{Omitted Proofs}
    \label{sec:omittedproofs}

 \subsection{Proof of \Cref{prop:PropertiesOfResolvent}} \label{sec:proofPropertiesOfResolvent}

We will also recall the case of strongly  monotone operators. 

\begin{proposition}[Contractivity of Resolvents]
    Let $F:\R^n \rightrightarrows \R^n$ be  a maximally monotone operator. Then its resolvent $\J_F = (\operatorname{Id} +F)^{-1}$ is everywhere defined and single-valued. Moreover:
	\begin{itemize}
		\item[(i)] If $F$ is $\mu$-strongly monotone, then the resolvent $\J_F$ is  $\frac{1}{1+\mu}$-Lipschitz. 
		\item[(ii)] If $F$ is $\frac{1}{L}$ inverse Lipschitz, then the resolvent $\J_F$ is  $\frac{L}{\sqrt{L^2+1}}$-Lipschitz.
	\end{itemize}
\end{proposition}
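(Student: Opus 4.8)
The plan is to handle the two claims in a uniform way: pick $x,x'\in\R^n$, set $p=\J_F(x)$, $p'=\J_F(x')$, and translate the resolvent identity into membership statements in $\gra(F)$. Concretely, $p=(\Id+F)^{-1}(x)$ means $x-p\in F(p)$, and likewise $x'-p'\in F(p')$. Before doing anything else I would dispatch the preliminary assertion that $\J_F$ is everywhere defined and single-valued: this is exactly Minty's theorem for maximally monotone operators (cf.\ \cite[Prop.~23.8, Cor.~23.9]{bauschkeConvexAnalysisMonotone2017}), so $p$ and $p'$ are well-defined points, not sets.

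For part (i), I would write $u:=x-p\in F(p)$ and $u':=x'-p'\in F(p')$ and apply $\mu$-strong monotonicity of $F$ to the pair $(p,u),(p',u')$:
\begin{align*}
\langle u-u',\,p-p'\rangle \;\geq\; \mu\,\|p-p'\|^2 .
\end{align*}
Substituting $u-u'=(x-x')-(p-p')$ gives $\langle x-x',p-p'\rangle-\|p-p'\|^2\geq\mu\|p-p'\|^2$, i.e.\ $\langle x-x',p-p'\rangle\geq(1+\mu)\|p-p'\|^2$. Cauchy--Schwarz then yields $\|x-x'\|\,\|p-p'\|\geq(1+\mu)\|p-p'\|^2$, hence $\|p-p'\|\leq\frac{1}{1+\mu}\|x-x'\|$, which is the claimed Lipschitz bound.

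For part (ii), I would keep the same $u,u'$ and use two facts about $F$: monotonicity (which follows from maximal monotonicity), $\langle u-u',p-p'\rangle\geq 0$, and the $\tfrac1L$-inverse-Lipschitz property, $\|u-u'\|\geq\tfrac1L\|p-p'\|$. Expanding $\|x-x'\|^2=\|(p-p')+(u-u')\|^2=\|p-p'\|^2+2\langle p-p',u-u'\rangle+\|u-u'\|^2$, dropping the nonnegative cross term, and then using the inverse-Lipschitz bound on $\|u-u'\|^2$ gives
\begin{align*}
\|x-x'\|^2 \;\geq\; \|p-p'\|^2+\|u-u'\|^2 \;\geq\; \Big(1+\tfrac{1}{L^2}\Big)\|p-p'\|^2,
\end{align*}
so $\|p-p'\|\leq\frac{L}{\sqrt{L^2+1}}\|x-x'\|$. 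This is precisely \Cref{prop:PropertiesOfResolvent}. I do not anticipate a genuine obstacle here; the only points requiring a little care are invoking Minty's theorem to justify single-valuedness (so that $p,p'$ are points and the inequalities make sense) and noting that $\tfrac1L$-inverse Lipschitz together with maximal monotonicity is exactly what lets us discard the cross term while still getting a strictly positive gain $\tfrac1{L^2}$ in the final estimate.
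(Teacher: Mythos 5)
Your proposal is correct and follows essentially the same route as the paper: part (ii) is the identical expansion of $\|x-x'\|^2=\|(p-p')+(u-u')\|^2$ with the cross term dropped by monotonicity and the inverse-Lipschitz bound applied to $\|u-u'\|^2$, and part (i) simply spells out the paper's one-line observation that $\Id+F$ is $(1+\mu)$-strongly monotone so its inverse is $\tfrac{1}{1+\mu}$-Lipschitz. No gaps.
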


            \begin{proof}
                That the resolvent of a maximally monotone operator is full-domain and single valued is the well-known Minty's Theorem \cite[Prop.~23.8]{bauschkeConvexAnalysisMonotone2017}.
                
                \textit{Proof of item i)}. The operator $\operatorname{Id+F}$ is $(1+\mu)$ -strongly monotone, so its inverse is $\frac{1}{1+\mu}$-Lipschitz. 
                
                \textit{Proof of item ii)}. For any $x, x' \in \R^n$, let $u = \J_{F}(x) $ and $u' = \J_{F}(x')$, which implies $u+F(u) \ni x'$ and $u'+F(u') \ni x'$, by definition of inverse operator. Hence, for some $v \in F(u)$ and $v' \in F(u')$, we have
            \begin{IEEEeqnarray*}{rCl}
                \norm{x-x'}^2 & = & \norm{u+v-u' - v'}^2\\
                & = & \norm{u-u'}^2 +  \norm{v-v'}^2 + 2 \innerproduct{u-u'}{v-v'}\\
                & \overset{(i)}{\geq} & \norm{u-u'}^2 + \frac{1}{L^2}\norm{u-u'}^2\\
                & = & \left (1+\frac{1}{L^2}\right ) \norm{u-u'}^2,
            \end{IEEEeqnarray*}
            where monotonicity and the inverse Lipschitz property of $A$ were used in (i). Taking a square root on both sides proves that $(\Id+F)^{-1}$ is Lipschitz with constant $\frac{L}{\sqrt{L^2+1}}<1$.
            \end{proof}

For future reference, let us also recall the following, well-known, result (we will prove a more general result in \Cref{thm:smoothAlgorithm}, hence we do not include a proof here). 

    \begin{proposition}[Contractivity of Gradient Step]
        \label{prop:propertiesOfGradientStep}   
        Let $f:\R^n \rightarrow R$ be 
         $\mu$-strongly convex and $L$-smooth. Then, for any $0<\alpha< \frac{2}{L}$, the operator $(\Id - \alpha \nabla f)$ is $\rho$-Lipschitz, with $\rho = \max \{|1-\alpha L|,|1-\alpha \mu|\}$.
    \end{proposition}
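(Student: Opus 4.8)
\emph{Proof proposal.} The plan is to reduce the claim to a scalar inequality along the segment joining two points. Fix $x,x'\in\R^n$, write $u\coloneqq x-x'$ and $v\coloneqq\nabla f(x)-\nabla f(x')$ (the gradient is single-valued since $f$ is $L$-smooth), and observe that $(\Id-\alpha\nabla f)(x)-(\Id-\alpha\nabla f)(x')=u-\alpha v$, so it suffices to show $\norm{u-\alpha v}^2\le\rho^2\norm{u}^2$ for $\rho=\max\{|1-\alpha\mu|,|1-\alpha L|\}$. Expanding, $\norm{u-\alpha v}^2=\norm{u}^2-2\alpha\innerproduct{u}{v}+\alpha^2\norm{v}^2$. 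The first ingredient is the standard interpolation inequality for $\mu$-strongly convex, $L$-smooth functions,
\[
  \innerproduct{u}{v}\ \ge\ \tfrac{\mu L}{\mu+L}\norm{u}^2+\tfrac{1}{\mu+L}\norm{v}^2,
\]
which is precisely the Baillon--Haddad $\tfrac{1}{L-\mu}$-cocoercivity of $\nabla f-\mu\Id=\nabla\big(f-\tfrac{\mu}{2}\norm{\cdot}^2\big)$ (the latter being convex and $(L-\mu)$-smooth); in the degenerate case $\mu=L$ one gets $v=\mu u$ directly, so assume $\mu<L$. Substituting gives
\[
  \norm{u-\alpha v}^2\ \le\ \Big(1-\tfrac{2\alpha\mu L}{\mu+L}\Big)\norm{u}^2+\Big(\alpha^2-\tfrac{2\alpha}{\mu+L}\Big)\norm{v}^2 .
\]

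The second ingredient is a case split on the sign of $c\coloneqq\alpha^2-\tfrac{2\alpha}{\mu+L}$, equivalently on whether $\alpha\le\tfrac{2}{\mu+L}$ or not. If $\alpha\le\tfrac{2}{\mu+L}$, then $c\le0$, and I would use $\norm{v}^2\ge\mu^2\norm{u}^2$ (from $\mu$-strong monotonicity of $\nabla f$ and Cauchy--Schwarz); since $c\le0$ this only increases the right-hand side, and the coefficient of $\norm{u}^2$ simplifies exactly to $1-2\alpha\mu+\alpha^2\mu^2=(1-\alpha\mu)^2$. If instead $\tfrac{2}{\mu+L}\le\alpha<\tfrac{2}{L}$, then $c\ge0$, and I would use $\norm{v}^2\le L^2\norm{u}^2$ (from $L$-Lipschitzness of $\nabla f$), so the coefficient of $\norm{u}^2$ simplifies to $1-2\alpha L+\alpha^2 L^2=(1-\alpha L)^2$. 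In both regimes $\norm{u-\alpha v}^2\le\rho^2\norm{u}^2$; taking square roots yields the $\rho$-Lipschitz bound, and $\rho<1$ follows since $0<\alpha<\tfrac{2}{L}\le\tfrac{2}{\mu}$ forces both $|1-\alpha\mu|<1$ and $|1-\alpha L|<1$.

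As a sanity check and alternative route, when $f\in C^2$ one can instead write $v=Mu$ with $M=\int_0^1\nabla^2 f(x'+tu)\,dt$, a symmetric matrix satisfying $\mu I\preceq M\preceq L I$, so that $u-\alpha v=(I-\alpha M)u$ and $\norm{I-\alpha M}=\max_{\lambda\in[\mu,L]}|1-\alpha\lambda|=\rho$, the maximum of the convex map $\lambda\mapsto|1-\alpha\lambda|$ over an interval being attained at an endpoint; the general case then follows by approximating $f$ with a $C^2$ function having the same $\mu$ and $L$ (e.g.\ Gaussian mollification, whose gradients converge locally uniformly). I expect the only delicate points to be the bookkeeping around the interpolation inequality and the $\mu=L$ degeneracy (or, in the $C^2$ route, making the approximation step rigorous); the two-case algebraic simplification is entirely routine. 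In any event this proposition is the $A=\0$ specialization of \Cref{thm:smoothAlgorithm}(i), so it is also subsumed by the analysis carried out there.
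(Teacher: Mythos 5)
Your proof is correct and follows essentially the same route the paper takes: the paper gives no standalone proof of \Cref{prop:propertiesOfGradientStep} but derives it as the $A=\0$ case of \Cref{lemma:PDGforward}, whose proof uses exactly your two ingredients --- the $(\mu,L)$-interpolation (cocoercivity of $\nabla f-\mu\Id$) inequality followed by a case split on the step size with the bounds $\norm{v}\geq\mu\norm{u}$ and $\norm{v}\leq L\norm{u}$. The Hessian-averaging ``sanity check'' is an extra (and for the non-$C^2$ case unnecessary) detour, but the main argument is complete and matches the paper's.
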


        \subsection{Proof of Proposition~\ref{prop:partialContractivity}}
            \label{app:propPartialContractivity}

        For simplicity of notation, we use a slightly refined (but equivalent) version of \Cref{prop:partialContractivity} in some of our proofs. Clearly,  \Cref{prop:partialContractivity} is retrieved from \Cref{prop:partialContractivityB} by choosing $\rho_\textnormal{f} =\rho _\textnormal{b} =1$.

        \begin{proposition}\label{prop:partialContractivityB}
        Let $\Phi_{\tau,\sigma} \succ 0$ as in \eqref{eq:matrixPhiEta}. Assume that there exists scalars $0 \leq  \rho_\textnormal{f}, \rho_\textnormal{b}, \gamma \leq  1$ and matrices $\Psi_{\textnormal b} \succcurlyeq 0 $, $\Psi _{\textnormal f}\succcurlyeq 0$, such that, for all $\omega, \omega' \in \R^{n+m}$,
        \begin{itemize}
            \item[\Aone{}.] $\|\mathcal{B}(\omega) - \mathcal{B}(\omega')\|^2_{\Phi_{\tau,\sigma}+\Psi_{\textnormal{b}}} 
            \leq  \rho_\textnormal{b}^2 \| \omega -\omega'\|^2_{\Phi_{\tau,\sigma}}$
            \item[\Atwo{}.] $\|\mathcal{F}(\omega) - \mathcal{F}(\omega')\|^2_{\Phi_{\tau,\sigma}} 
            \leq \rho_\textnormal{f}^2 \| \omega -\omega'\|^2_{\Phi_{\tau,\sigma}-\Psi_{\textnormal{f}}}$
        \end{itemize}
        and $ \Psi_{\textnormal b}+ \Psi _{\textnormal f} \succcurlyeq \gamma (\Phi_{{\tau,\sigma}}+\Psi_{\textnormal b}) $. 
        Then, the forward-backward iteration in \eqref{eq:precFB} is contractive in $\| \cdot\|_{\Phi_{{\tau,\sigma}}+\Psi_{\textnormal{b}}} $, with rate 
        $\rho = \rho_\textnormal{b}\rho_\textnormal{f} \sqrt{1-\gamma}$. 
    \end{proposition}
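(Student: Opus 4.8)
The plan is a short two-step composition argument followed by a Loewner-order comparison; no deep machinery is needed. First I would note that since $\Phi_{\tau,\sigma}\succ 0$ and $\Psi_{\textnormal b}\succcurlyeq 0$, the matrix $\Phi_{\tau,\sigma}+\Psi_{\textnormal b}$ is positive definite, so $\|\cdot\|_{\Phi_{\tau,\sigma}+\Psi_{\textnormal b}}$ is a genuine norm and contractivity in this norm is well-posed. The iteration map of \eqref{eq:precFB} is the composition $\mathcal{B}\circ\mathcal{F}$ with $\mathcal{F},\mathcal{B}$ as in \eqref{eq:FBsteps}, so it suffices to bound $\|(\mathcal{B}\circ\mathcal{F})(\omega)-(\mathcal{B}\circ\mathcal{F})(\omega')\|_{\Phi_{\tau,\sigma}+\Psi_{\textnormal b}}$ for arbitrary $\omega,\omega'\in\R^{n+m}$.

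For the first step I would apply hypothesis \Aone{} to the pair $\mathcal{F}(\omega),\mathcal{F}(\omega')$ in place of $\omega,\omega'$, obtaining $\|(\mathcal{B}\circ\mathcal{F})(\omega)-(\mathcal{B}\circ\mathcal{F})(\omega')\|^2_{\Phi_{\tau,\sigma}+\Psi_{\textnormal b}}\le\rho_{\textnormal b}^2\,\|\mathcal{F}(\omega)-\mathcal{F}(\omega')\|^2_{\Phi_{\tau,\sigma}}$, and then bound the right-hand side with \Atwo{} to get $\le\rho_{\textnormal b}^2\rho_{\textnormal f}^2\,\|\omega-\omega'\|^2_{\Phi_{\tau,\sigma}-\Psi_{\textnormal f}}$. (Hypothesis \Atwo{} implicitly forces $\|\omega-\omega'\|^2_{\Phi_{\tau,\sigma}-\Psi_{\textnormal f}}\ge 0$, but this fact is not even needed for the chain of inequalities.)

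The final step is a comparison of quadratic forms. Writing $\Phi_{\tau,\sigma}-\Psi_{\textnormal f}=(\Phi_{\tau,\sigma}+\Psi_{\textnormal b})-(\Psi_{\textnormal b}+\Psi_{\textnormal f})$ and invoking the assumption $\Psi_{\textnormal b}+\Psi_{\textnormal f}\succcurlyeq\gamma(\Phi_{\tau,\sigma}+\Psi_{\textnormal b})$ yields $\Phi_{\tau,\sigma}-\Psi_{\textnormal f}\preccurlyeq(1-\gamma)(\Phi_{\tau,\sigma}+\Psi_{\textnormal b})$ in the Loewner order, hence $\|\omega-\omega'\|^2_{\Phi_{\tau,\sigma}-\Psi_{\textnormal f}}\le(1-\gamma)\|\omega-\omega'\|^2_{\Phi_{\tau,\sigma}+\Psi_{\textnormal b}}$; this transfer is legitimate even though $\Phi_{\tau,\sigma}-\Psi_{\textnormal f}$ may be indefinite, because it is simply an inequality between the associated quadratic forms. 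Combining the three bounds and taking square roots gives $\|(\mathcal{B}\circ\mathcal{F})(\omega)-(\mathcal{B}\circ\mathcal{F})(\omega')\|_{\Phi_{\tau,\sigma}+\Psi_{\textnormal b}}\le\rho_{\textnormal b}\rho_{\textnormal f}\sqrt{1-\gamma}\,\|\omega-\omega'\|_{\Phi_{\tau,\sigma}+\Psi_{\textnormal b}}$, i.e., contractivity with $\rho=\rho_{\textnormal b}\rho_{\textnormal f}\sqrt{1-\gamma}\le 1$ (and $\rho<1$ as soon as $\gamma>0$ or one of $\rho_{\textnormal b},\rho_{\textnormal f}$ is $<1$). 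The original \Cref{prop:partialContractivity} is then recovered by setting $\rho_{\textnormal f}=\rho_{\textnormal b}=1$.

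There is essentially no hard step: the only points requiring care are (i) verifying that $\Phi_{\tau,\sigma}+\Psi_{\textnormal b}\succ 0$ so that the statement is meaningful, and (ii) making sure the Loewner ordering is passed correctly to the level of quadratic forms through the possibly indefinite matrix $\Phi_{\tau,\sigma}-\Psi_{\textnormal f}$. The real work of the paper is not in this proposition but in establishing \Aone{}, \Atwo{}, and the Loewner condition $\Psi_{\textnormal b}+\Psi_{\textnormal f}\succcurlyeq\gamma(\Phi_{\tau,\sigma}+\Psi_{\textnormal b})$ for each concrete splitting under \Cone{}, \Ctwo{}, \Cthree{} — which is where the inverse-Lipschitz lemmas come into play.
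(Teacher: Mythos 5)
Your proposal is correct and follows essentially the same route as the paper's proof: apply \Aone{} to the pair $\mathcal{F}(\omega),\mathcal{F}(\omega')$, then \Atwo{}, then convert the Loewner condition $\Psi_{\textnormal b}+\Psi_{\textnormal f}\succcurlyeq\gamma(\Phi_{\tau,\sigma}+\Psi_{\textnormal b})$ into the quadratic-form bound $\|\omega-\omega'\|^2_{\Phi_{\tau,\sigma}-\Psi_{\textnormal f}}\le(1-\gamma)\|\omega-\omega'\|^2_{\Phi_{\tau,\sigma}+\Psi_{\textnormal b}}$ and take square roots. Your remarks on well-posedness of the weighted norm and on the possibly indefinite matrix $\Phi_{\tau,\sigma}-\Psi_{\textnormal f}$ are correct and do not change the argument.
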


            \begin{proof}
                Let $\omega, \omega' \in \R^{n+m}$. We have
                \begin{IEEEeqnarray*}{rCl}
                    \IEEEeqnarraymulticol{3}{l}{ \norm{\mathcal{B}\circ \mathcal{F}(\omega)- \mathcal{B}\circ \mathcal{F}(\omega')}_{\Phi_{\tau, \sigma}+\Psi_b}^2} \nonumber\\
                    & \overset{(i)}{\leq} & \rho_\textnormal{b} ^2\norm{\mathcal{F}(\omega)-\mathcal{F}(\omega')}_{\Phi_{\tau, \sigma}}^2\\
                    & \overset{(ii)}{\leq} &  
                    \rho_\textnormal{b}^2\rho_\textnormal{f}^2
                    \norm{\omega - \omega'}_{\Phi_{\tau, \sigma}-\Psi_f}^2\\
                    & = &  \rho_\textnormal{b}^2\rho_\textnormal{f}^2 \left (\omega-\omega' \right )^\top \left (\Phi_{\tau, \sigma}-\Psi_f\right ) \left(\omega - \omega' \right)\\
                    & \overset{(iii)}{\leq} &  \rho_\textnormal{b}^2\rho_\textnormal{f}^2 \left ( \omega-\omega' \right)^\top \left (\Phi_{\tau, \sigma}+\Psi_b - \gamma (\Phi_{\tau, \sigma}+\Psi_b) \right) \left ( \omega-\omega' \right)\\
                    & = &  \rho_\textnormal{b}^2\rho_\textnormal{f}^2(1-\gamma) \left(\omega-\omega' \right )^\top \left (\Phi_{\tau, \sigma}+\Psi_b\right ) \left(\omega - \omega' \right)\\
                    & = & \rho_\textnormal{b}^2\rho_\textnormal{f}^2  (1-\gamma) \norm{\omega-\omega'}_{\Phi_{\tau, \sigma}+\Psi_b}^2,
                \end{IEEEeqnarray*}
                 where (i) follows from \Aone{}, (ii) from \Atwo{} and (iii) from $\Psi_{\textnormal b}+ \Psi _{\textnormal f} \succcurlyeq \gamma (\Phi_{{\tau,\sigma}}+\Psi_{\textnormal b})$. Taking the square root on both sides concludes the proof.
            \end{proof}
          
        \subsection{\texorpdfstring{Proof of \Cref{lemma:IL1}}{chambollePockC2InverseLipschitz}}
                The lemma assumes that \Ctwo{} holds, hence $g$ is strongly convex and smooth and the matrix $A$ has full column rank. Define the following matrix $\Psi_{\alpha} \in \R^{(n+m) \times (n+m)}$, for $\alpha >0$ small enough to be chosen,
                \begin{equation*}
                    \Psi_{\alpha} = \begin{bmatrix}
                        I_n &0\\
                        -\alpha A &I_m
                    \end{bmatrix}.
                \end{equation*}
               For any $\omega =(x,y), \omega'=(x',y') \in \R^{n+m}$, let $v \in F(\omega)$, $v' \in F(\omega')$, so that, for some  $u \in \partial f (x)$ and $u' \in \partial f(x')$ 
\allowdisplaybreaks{\begin{IEEEeqnarray*}{rCl}
                        \IEEEeqnarraymulticol{3}{l}{
                             \norm{v-v'}\norm{\Psi_{\alpha}} \norm{\omega - \omega'}} \nonumber \\
                        &=& \norm{\begin{bmatrix}
                                u + A^\top y-(u'+A^\top y')\\
                                \nabla g(y)-Ax-(\nabla g(y')-Ax')
                        \end{bmatrix}} \norm{\begin{bmatrix}
                                I_n &0\\
                                -\alpha A &I_m
                        \end{bmatrix}} \norm{\begin{bmatrix}
                                x-x'\\
                                y-y'
                        \end{bmatrix}}\\
                        & \geq & \norm{\begin{bmatrix}
                                u + A^\top y-(u'+A^\top y')\\
                                \nabla g(y)-Ax-(\nabla g(y')-Ax')
                        \end{bmatrix}} \norm{\begin{bmatrix}
                                x-x'\\
                                -\alpha A(x-x')+y-y'
                        \end{bmatrix}}\\
                        & \overset{(i)}{\geq} & \innerproduct{\begin{bmatrix}
                                u + A^\top y-(u'+A^\top y')\\
                                \nabla g(y)-Ax-(\nabla g(y')-Ax')
                        \end{bmatrix}}{\begin{bmatrix}
                                x-x'\\
                                -\alpha A(x-x')+y-y'
                        \end{bmatrix}}\\
                        & = & \innerproduct{u-u'}{x-x'}+\cancel{\innerproduct{A^\top (y-y')}{x-x'}}+\innerproduct{\nabla g(y)-\nabla g(y')}{y-y'}\\
                        && -\alpha \innerproduct{\nabla g(y)-\nabla g(y')}{A(x-x')}+\alpha \norm{A(x-x')}^2-\cancel{\innerproduct{A(x-x')}{y-y}}\\
                        & \overset{(ii)}{\geq} & \innerproduct{\nabla g(y)-\nabla g(y')}{y-y'}-\alpha \innerproduct{\nabla g(y)-\nabla g(y')}{A(x-x')}\\
                        && + \mf \norm{x-x'}^2+\alpha \norm{A(x-x')}^2\\
                        & \overset{(iii)}{\geq} &\mg\norm{y-y'}^2- \alpha \Lg\norm{A}\norm{x-x'}\norm{y-y'}\\
                        &&\mf \norm{x-x'}^2+\alpha \lambdamin{A^\top A} \norm{x-x'}\\
                        & = & \begin{bmatrix}
                            \norm{x-x'} &\norm{y-y'}
                        \end{bmatrix}
                        \underbrace{\begin{bmatrix}
                            \mf + \alpha \lambdamin{A^\top A} &-\frac{\alpha \Lg\norm{A}}{2}\\
                            -\frac{\alpha \Lg\norm{A}}{2} &\mg
                        \end{bmatrix}}_{\coloneqq M_\alpha} \begin{bmatrix}
                            \norm{x-x'}\\
                            \norm{y-y'}
                        \end{bmatrix}\\
                        & \geq &\lambdamin{M_{\alpha}} \norm{\omega-\omega'}^2,
                \end{IEEEeqnarray*}}
                where (i) follows from the Cauchy--Schwarz inequality, (ii) from (strong) convexity of $f$ (with $\mf=0$ possibly) and (iii) from strong convexity and smoothness of $g$. By Sylvester's criterion, the matrix $M_{\alpha}$ is positive definite (hence $\lambdamin{M_\alpha}>0$) for $\alpha$ small enough, in particular  if we choose 
                \begin{equation*}
                    \alpha < \frac{2 \lambdamin{A^\top A}\mg+2\sqrt{\lambda_{\min}^2(A^\top A)\mg^2+\Lg^2\norm{A}^2\mg\mf}}{\Lg^2\norm{A}^2}.
                \end{equation*}
                Using   that $ \lambdamax{  \Psi_{\alpha}} = \|\Psi_{\alpha}\| \leq 1+\alpha \norm{A}$ 
                yields  $\| v-v'\| \geq \frac{\lambdamin{M_\alpha}}{1+\alpha \|A\|} \|\omega-\omega'\|$, 
                which concludes the proof with $R_2 = \frac{1+\alpha \|A\|}{\lambdamin{M_\alpha}}>0$. 
            \hfill $\square$
            
        \subsection{\texorpdfstring{Proof of \Cref{lemma:IL2}}{chambollePockC3InverseLipschitz}}
                The idea is very similar to the proof of \Cref{lemma:IL1}. Under \Cthree{},  both functions $f$ and $g$ are smooth and the matrix $A$ is invertible. Define the following matrix,$\Psi_{\varepsilon}\in \R^{2n \times 2n}$, for $\varepsilon>0$ small enough to be chosen,
                \begin{equation*}
                    \Psi_{\varepsilon} = \begin{bmatrix}
                        I_n &\varepsilon A^\top \\
                        -\varepsilon A &I_n
                    \end{bmatrix}.
                \end{equation*}
                Then we have, for any $\omega=(x,y), \omega'=(x',y') \in \R^{2n}$,
                \allowdisplaybreaks{\begin{IEEEeqnarray*}{rCl}
                        \IEEEeqnarraymulticol{3}{l}{\norm{F(\omega)-F(\omega')}\norm{\Psi_{\varepsilon}}\norm{\omega - \omega'}}\\
                        & \geq & \norm{\begin{bmatrix}
                                \nabla f(x)- \nabla f(x')+A^\top (y-y')\\
                                \nabla g(y)-\nabla g(y')-A(x-x')
                        \end{bmatrix}}\norm{\begin{bmatrix}
                                x-x'+\varepsilon A^\top (y-y')\\
                                y-y'-\varepsilon A(x-x')
                        \end{bmatrix}}\\
                        & \overset{(i)}{\geq} & \innerproduct{\begin{bmatrix}
                                \nabla f(x)- \nabla f(x')+A^\top (y-y')\\
                                \nabla g(y)-\nabla g(y')-A(x-x')
                        \end{bmatrix}}{\begin{bmatrix}
                                x-x'+\varepsilon A^\top (y-y')\\
                                y-y'-\varepsilon A(x-x')
                        \end{bmatrix}}\\
                        & = & \innerproduct{\nabla f(x)-\nabla f(x')}{x-x'}+\varepsilon \innerproduct{\nabla f(x)-\nabla f(x')}{A^\top (y-y')}+\varepsilon \norm{A^\top (y-y')}^2\\
                        && +\innerproduct{\nabla g(y)-\nabla g(y')}{y-y'}-\varepsilon \innerproduct{\nabla g(y)-\nabla g(y')}{A(x-x')}+\varepsilon \norm{A(x-x')}^2\\
                        & \overset{(ii)}{\geq} & \frac{1}{\Lf}\norm{\nabla f(x)-\nabla f(x')}^2+\varepsilon \innerproduct{\nabla f(x)-\nabla f(x')}{A^\top (y-y')}+\varepsilon \norm{A^\top (y-y')}^2\\
                        && +\frac{1}{\Lg} \norm{\nabla g(y)-\nabla g(y')}^2-\varepsilon \innerproduct{\nabla g(y)-\nabla g(y')}{A(x-x')}+\varepsilon \norm{A(x-x')}^2\\
                        & \overset{(iii)}{\geq} & \left (  \frac{1}{\Lf}-\frac{\norm{A}\varepsilon \delta}{2}\right)\norm{\nabla f(x)-\nabla f(x')}^2 + \varepsilon \left ( \lambdamin{AA^\top }-\frac{\norm{A}}{2 \delta}\right )\norm{y-y'}^2\\
                        && +\left ( \frac{1}{\Lg}-\frac{\norm{A}\varepsilon \delta}{2} \right )\norm{\nabla g(y)-\nabla g(y')}^2 + \varepsilon \left ( \lambdamin{A^\top A}- \frac{\norm{A}}{2\delta}\right )\norm{x-x'}^2
                \end{IEEEeqnarray*}}
                where (i) follows from Cauchy-Schwarz, (ii) from cocoercivity of $\nabla f$ and $\nabla g$, (iii) from Young's inequality, for any $\delta>0$. In particular, if we choose $\delta > \frac{\norm{A}}{2\mu_A}$, $\varepsilon< \frac{2}{L\norm{A}\delta}$, $L= \max\{\Lf,\Lg\}$ we obtain $\|F(\omega)-F(\omega') \|\norm{\Psi_{\varepsilon}}\norm{\omega - \omega'} \geq C(\epsilon,\delta) \| \omega - \omega'\|^2$, where 
                \begin{equation*}
                    C(\varepsilon, \delta) = \varepsilon \left (\mu_A -\frac{\norm{A}}{2\delta}\right ) >0.
                \end{equation*}
               Using $\norm{\Psi_{\varepsilon}} \leq 1+\varepsilon \norm{A}$ concludes the proof, with  $R_3 = \frac{1+\varepsilon \norm{A}}{C(\varepsilon, \delta)}>0.$
            \hfill $\square$
            
        \subsection{\texorpdfstring{Proof of \Cref{thm:chambollePock}}{chambollePock}}
          We first recall  an additional result.
            \begin{lemma}
                \label{lemma:monotonyPreconditionedInverse}
                Let $F:\R^{n+m}\rightrightarrows \R^{n+m}$ be $\mu$-strongly monotone in $\innerproduct{\cdot}{\cdot}$, with $\mu \geq 0$, and $\P \succ 0$. Then $\Phi_{\tau,\sigma}^{-1}F$ is 
                strongly monotone in $\innerproduct{\cdot}{\cdot}_{\Phi_{\tau,\sigma}}$ with parameter $\frac{\mu }{\lambdamax{\Phi_{\tau,\sigma}}}$, i.e., for any $(\omega,u),(\omega',u') \in \gra(\Phi_{\tau,\sigma}^{-1} F)$, it holds that $\innerproduct{u-u'}{\omega-\omega'}_{\Phi_{\tau,\sigma}} \geq \frac{\mu}{\lambdamax{\P}}\|\omega - \omega'\|^2_\P$. 
            \end{lemma}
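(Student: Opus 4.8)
The plan is to unwind the definition of the preconditioned operator, invoke strong monotonicity of $F$ in the standard inner product, and then translate the resulting estimate into the weighted inner product $\innerproduct{\cdot}{\cdot}_{\Phi_{\tau,\sigma}}$ using nothing more than the spectral bound $\Phi_{\tau,\sigma}\preccurlyeq\lambdamax{\Phi_{\tau,\sigma}}\Id$.

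First I would fix $(\omega,u),(\omega',u')\in\gra(\Phi_{\tau,\sigma}^{-1}F)$. By definition of the inverse operator, this is equivalent to $\Phi_{\tau,\sigma}u\in F(\omega)$ and $\Phi_{\tau,\sigma}u'\in F(\omega')$. Applying $\mu$-strong monotonicity of $F$ to these two pairs gives
\begin{align*}
\innerproduct{\Phi_{\tau,\sigma}u-\Phi_{\tau,\sigma}u'}{\omega-\omega'}\geq \mu\norm{\omega-\omega'}^2 .
\end{align*}
The left-hand side is, by the definition of the weighted inner product recalled in \Cref{sec:preliminaries} (namely $\innerproduct{a}{b}_{\Phi_{\tau,\sigma}}=b^\top\Phi_{\tau,\sigma}a$ and the symmetry of $\Phi_{\tau,\sigma}$), exactly $\innerproduct{u-u'}{\omega-\omega'}_{\Phi_{\tau,\sigma}}$, so the inequality reads $\innerproduct{u-u'}{\omega-\omega'}_{\Phi_{\tau,\sigma}}\geq \mu\norm{\omega-\omega'}^2$.

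Next I would relate the unweighted norm on the right to the weighted one. Since $\Phi_{\tau,\sigma}$ is symmetric positive definite, $\norm{\omega-\omega'}_{\Phi_{\tau,\sigma}}^2=(\omega-\omega')^\top\Phi_{\tau,\sigma}(\omega-\omega')\leq \lambdamax{\Phi_{\tau,\sigma}}\norm{\omega-\omega'}^2$, hence $\norm{\omega-\omega'}^2\geq \frac{1}{\lambdamax{\Phi_{\tau,\sigma}}}\norm{\omega-\omega'}_{\Phi_{\tau,\sigma}}^2$. Substituting this into the previous display yields
\begin{align*}
\innerproduct{u-u'}{\omega-\omega'}_{\Phi_{\tau,\sigma}}\geq \frac{\mu}{\lambdamax{\Phi_{\tau,\sigma}}}\norm{\omega-\omega'}_{\Phi_{\tau,\sigma}}^2 ,
\end{align*}
which is precisely the claimed strong monotonicity (the case $\mu=0$ being the trivial monotone case). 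There is no real obstacle here; the only point requiring a little care is bookkeeping—keeping track of which inner product is being used when moving $\Phi_{\tau,\sigma}$ across the pairing, and using the eigenvalue inequality in the correct direction (an upper bound on $\Phi_{\tau,\sigma}$ becomes a lower bound on $\norm{\cdot}^2$ relative to $\norm{\cdot}^2_{\Phi_{\tau,\sigma}}$).
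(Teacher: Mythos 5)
Your proof is correct and follows essentially the same route as the paper's: unwind the graph of $\Phi_{\tau,\sigma}^{-1}F$, apply strong monotonicity of $F$ in the unweighted inner product (recognizing $\innerproduct{\Phi_{\tau,\sigma}(u-u')}{\omega-\omega'}=\innerproduct{u-u'}{\omega-\omega'}_{\Phi_{\tau,\sigma}}$), and then pass to the weighted norm via $\norm{\cdot}_{\Phi_{\tau,\sigma}}^2\leq\lambdamax{\Phi_{\tau,\sigma}}\norm{\cdot}^2$. The only cosmetic difference is that the paper parametrizes points by $\gra(F)$ and writes $\Phi_{\tau,\sigma}^{-1}u$, whereas you parametrize by $\gra(\Phi_{\tau,\sigma}^{-1}F)$ directly; the argument is identical.
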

    
            \begin{proof}
                 Note that  $ (\omega,u) \in\gra(F)$ if and only if $(\omega, \Phi_{\tau,\sigma}^{-1}u) \in \gra(\Phi_{\tau,\sigma}^{-1}F)$. For any 
                 $(\omega,u),( \omega',u') \in \gra F$, we have 
                \begin{IEEEeqnarray*}{rCl}
                        \innerproduct{ \Phi_{\tau,\sigma}^{-1}u- \Phi_{\tau,\sigma}^{-1}u'}{\omega -\omega'}_{\Phi_{\tau,\sigma}} 
                        & = & \innerproduct{u-u'}{\omega-\omega'}\\
                        & \geq & \mu \norm{\omega-\omega'}^2\\
                        & \geq & \frac{\mu }{\lambdamax{\Phi_{\tau,\sigma}}} \norm{ \omega-\omega'}_{\Phi_{\tau,\sigma}}^2,
                    \end{IEEEeqnarray*}
                where the last inequality follows from  equivalence of norms.
            \end{proof}

            For any $\tau>0,\sigma>0$, the fixed points of \Cref{alg:chambollePock} are the vectors $(x^\star,\omega^\star)$ that satisfy 
            \begin{align*} 
            x^\star & = \operatorname{prox}_{\tau f} \left(x^\star - \tau A^\top y^\star \right)
            \\
            y^\star & = \operatorname{prox}_{\sigma g} \left( y^\star +\sigma A (2x^\star-x^\star) \right),
        \end{align*}
            or, equivalently, by definition of proximal operator, 
            \begin{align*} 
            x^\star + \tau \partial f(z^\star)   & \ni  x^\star - \tau A^\top y^\star 
            \\
            y^\star + \sigma \partial g & \ni y^\star +\sigma A x^\star, 
        \end{align*}
            equivalently, $\0 \in F(x^\star,y^\star)$, equivalently the solutions of \eqref{eq:bilinearSaddlePointProblem}.

            Furthermore, whenever $\sigma \tau \|A\|^2 <1$ (so that $\P \succ 0$), \Cref{alg:chambollePock} can be recast as the forward-backward iteration in \eqref{eq:precFB} (see \Cref{sec:algderivation1}), with $\Ff =\0$, $\Fb = F$. Since $\Ff = \0$, then the forward-step $\mathcal{F} = \operatorname{Id}$ satisfies trivially \Atwo{} in \Cref{prop:partialContractivityB} with $\Psi_\textnormal{f} = \0$, $\rho_\textnormal{f}=1$ (namely, $\F$ is nonexpansive in $\| \cdot\|_\P$). 
             It is left to show that $\mathcal B$ is contractive,  which is done in the following, for each of the conditions \Cone{}, \Ctwo{}, \Cthree. 
             
            \subsubsection{\texorpdfstring{Item (i): \Cref{alg:chambollePock} under \Cone{}}{chambollePockC1}}
    
                We start by an auxiliary result. Let $\tilde{\rho} := \min{\{\mf \tau, \mg \sigma, \kappa\}}$. We  prove that
                \begin{equation}
                    \label{eq:psdOfCase1}
                    \begin{bmatrix}
                        \mf I &0\\
                        0 & \mg I
                    \end{bmatrix}-\tilde{\rho} \Phi_{\tau, \sigma} = \begin{bmatrix}
                       \left (\mf - \tilde{\rho}\frac{1}{\tau}\right)I &\tilde{\rho}A^\top \\
                        \tilde{\rho}A &\left(\mg - \tilde{\rho}\frac{1}{\sigma}\right)I
                    \end{bmatrix} \succcurlyeq 0.
                \end{equation}
                If $A=\0$, this is obvious. If $\norm{A} \neq 0$, then either $\tilde{\rho}<\mf \tau$ or $\tilde{\rho}<\mg \sigma$, because $\kappa = \frac{\mf \tau}{1 + \sqrt{\tau \sigma \norm{A}^2}}< \mg \tau$ if $\mf \tau = \mg \sigma$. Let us assume without loss of generality that $\tilde{\rho}<\mf \tau$. Then, by Schur complement, \eqref{eq:psdOfCase1} is equivalent to
                \begin{equation*}
                    \left(\mg - \tilde{\rho} \frac{1}{\sigma}\right ) I - \tilde{\rho}^2A \left [\left (\mf - \tilde{\rho} \frac{1}{\tau}\right ) I \right]^{-1}A^\top  \succcurlyeq 0,
                \end{equation*}
                which is implied by (since $AA^\top  \succcurlyeq 0$)
                \begin{IEEEeqnarray*}{rC}
                     \left (\mf - \tilde{\rho} \frac{1}{\tau}\right ) \left ( \mg - \tilde{\rho}\frac{1}{\sigma} \right ) - \tilde{\rho}^2 \norm{A}^2 &\geq 0 \\
                     \Longleftrightarrow \quad \tilde{\rho}^2 ( 1- \tau \sigma \norm{A}^2) - \rho (\mf \tau + \mg \sigma) + \mf \mg \tau \sigma  &\geq 0.
                \end{IEEEeqnarray*}
                In turn, by solving the second-order inequality, we obtain that \eqref{eq:psdOfCase1} holds if $\tilde{\rho} \leq \kappa$ (note that $1-\tau \sigma \norm{A}^2>0$  and the determinant of the second order inequality is   $\left ( \mf \tau + \mg \sigma \right )^2 - 4 (1-\tau \sigma \norm{A}^2)(\mf \mg \tau \sigma)\geq (\mf \tau - \mg \sigma)^2 \geq 0 $),         
                which proves \eqref{eq:psdOfCase1}. We further note   that $\kappa>0$, because 
                $(\mf \tau - \mg\sigma)^2 + 4 \|A\|^2 \mf\mg \tau^2 \sigma^2 < (\mf \tau + \mg\sigma)^2$ whenever 
                $\tau\sigma \|A\|^2<1$.

                Now, for $\omega, \omega' \in \R^{n+m}$, let $v=(I+\Phi_{\tau, \sigma}^{-1}F)^{-1}\omega$ and $v'=(I+\Phi_{\tau, \sigma}^{-1}F)^{-1}\omega'$, which implies $\omega = v + \Phi_{\tau, \sigma}^{-1} z$ and $\omega' = v' + \Phi_{\tau, \sigma}^{-1} z'$ for some $z \in Fv$ and $z' \in Fv'$, by definition of inverse operator.
                Therefore, by strong convexity, definition of $F$, and letting $v = (v_x,v_y)$, $v' = (v_x',v_y')$, we have
                \begin{IEEEeqnarray*}{rCl}
                    \IEEEeqnarraymulticol{3}{l}{
                        \norm{\omega-\omega'}_{\Phi_{\tau, \sigma}} \norm{v-v'}_{\Phi_{\tau, \sigma}}
                    } \nonumber \\
                    & = & \norm{v+\Phi_{\tau, \sigma}^{-1}z-\left(v'+\Phi_{\tau, \sigma}^{-1}z'\right )}_{\Phi_{\tau, \sigma}}\norm{v-v'}_{\Phi_{\tau, \sigma}}\\
                    & \overset{(i)}{\geq} & \innerproduct{v+\Phi_{\tau, \sigma}^{-1}z-\left(v'+\Phi_{\tau, \sigma}^{-1}z'\right )}{v-v'}_{\Phi_{\tau, \sigma}}\\
                    & \overset{(ii)}{\geq} & \norm{v-v'}_{\Phi_{\tau, \sigma}}^2 + \mf \norm{v_x-v_x'}^2+\mg \norm{v_y-v_y'}^2\\
                    &=& (v-v')^\top \left ( \Phi_{\tau, \sigma}+ \begin{bmatrix}
                        \mf I &0\\
                        0 &\mg I
                    \end{bmatrix}\right) (v-v')\\
                    & \overset{(iii)}{\geq} & (v-v')^\top  \left ((1+\tilde{\rho})  \Phi_{\tau, \sigma} \right)(v-v')\\
                    & = & (1+\tilde{\rho}) \norm{v-v'}_{\Phi_{\tau, \sigma}}^2
                \end{IEEEeqnarray*}
                where (i) follows from Cauchy-Schwarz, (ii) from the definition of the weighted inner product and (iii) from \eqref{eq:psdOfCase1}. Dividing both sides by $(1+\tilde \rho)\|v-v' \|$, we conclude that $\mathcal B$ is contractive in $\|\cdot\|_\P$ with constant $\rho$ (i.e., \Aone{} in \Cref{prop:partialContractivityB},  
                holds with $\Psi_\textnormal{b} = \0$, $\gamma =0$, $\rho_b = \rho$). Then the contractivity result follows by   \Cref{prop:partialContractivityB} (we recall that \Atwo{} in \Cref{prop:partialContractivityB} holds with $\Psi_\textnormal{f} = \0$, $\rho_\textnormal{f}=1$).

                Concerning the optimal step sizes, first fix $ \tau \sigma \|A\|^2 (1+\alpha)^2 =1$, for some $\alpha \geq \epsilon$.  We now want to maximize $\kappa$ by choosing opportunely $\sigma$ and $\tau$, where we recall that
                \begin{align*}
                    \kappa = 		\left(   \frac{\mf \tau +\mg \sigma  -\sqrt{ (\mf \tau - \mg\sigma)^2 + 4 \|A\|^2 \mf\mg \tau^2 \sigma^2}}{2(1-\sigma\tau \|A\|^2)}\right) >0.
                \end{align*}
                Because $\sigma \tau = \frac{1}{\|A\|^2(1+\alpha)^2}$ is a constant independent of $\tau$ and $\sigma$, maximizing $\kappa$ is equivalent to maximizing
                \begin{align*}
                   ( \mf \tau +\mg \sigma) - \sqrt{( \mf \tau +\mg \sigma)^2 - K },
                \end{align*}
                where $K$ is a constant independent of $\sigma $ and $\tau$. 
                In turn, the latter expression is decreasing in $( \mf \tau +\mg \sigma)$. Minimizing $\mf \tau +\mg \sigma = \mf \tau +\frac{\mg}{\|A\|^2(1+\alpha)^2 \tau}$ is a convex optimization problem in $\tau$, whose solution    gives the step size $\tau = \frac{1}{(1+\alpha)\|A\|}\sqrt{\frac{\mg}{\mf}} $, and thus $\sigma = \frac{1}{(1+\alpha)\|A\|}\sqrt{\frac{\mf}{\mg}}$. For this values of the step sizes, we get 
                \begin{align}
                    \kappa = \frac{ 
                    2 \frac{\sqrt{\mf\mg}}{(1+\alpha)\|A\|} - \sqrt{\frac{4\mf\mg}{\|A\|^2(1+\alpha)^4 }} }{ 2\left(1- \frac{1}{(1+\alpha)^2} \right)} = \frac{\sqrt{\mf \mg}}{(\alpha+2)\|A\|} < \frac{\sqrt{\mf \mg}}{(\alpha+1)\|A\|} = \mf \tau = \mf g. \label{eq:kappaopt}
                \end{align}
                We conclude that the choice $\tau = \frac{1}{(1+\alpha)\|A\|}\sqrt{\frac{\mg}{\mf}} $,  $\sigma = \frac{1}{(1+\alpha)\|A\|}\sqrt{\frac{\mf}{\mg}}$ maximizes $\rho$, provided that $\tau \sigma \|A\|^2(1+\alpha)^2 =1$. Because the optimal rate is increasing in $\alpha$ (i.e., the optimal $\kappa$ in \eqref{eq:kappaopt} is decreasing in $\alpha$), we conclude that the optimal choice is $\alpha =\epsilon$, which is the desired result.

            \subsubsection{\texorpdfstring{Item (ii): \Cref{alg:chambollePock} under \Ctwo{}}{chambollePockC2}}\label{proof:algo1C2}
                
                    For all $\omega, \omega' \in \R^{n+m}$ 
                    and
                    $v\in  F_\textnormal{b}(\omega)$, $v' \in   F_\textnormal{b}(\omega')$
                    we have 
                    \begin{IEEEeqnarray*}{rCl}
                        \norm{\Phi_{\tau, \sigma}^{-1}v-\Phi_{\tau, \sigma}^{-1}v'}_{\Phi_{\tau, \sigma}}^2 & = & \innerproduct{\Phi_{\tau, \sigma}^{-1}(v-v')}{v-v'}\\
                        & \geq & \lambdamin{\Phi_{\tau, \sigma}^{-1}} \norm{v-v'}^2\\
                        & \overset{(i)}{\geq} & \frac{1}{\|\P\| R_2^2} \| \omega - \omega'\|^2
                        \\
                        & \geq & \frac{1}{\|\P\|^2 R_2^2} \| \omega - \omega'\|^2_\P,
                    \end{IEEEeqnarray*}
                    where in (i) \Cref{lemma:IL1} was used. The operator $\Phiinv F_\textnormal{b}$ is therefore inverse Lipschitz with constant $ S_2 = \frac{1}{\|\P\| R_2}$ with respect to $\|\cdot \|_
                    \P$. Furthermore, since $F_\textnormal{b}$ is monotone (in $\langle \cdot, \cdot \rangle $), then   $\Phi_{\tau, \sigma}^{-1}F_\textnormal{b}$ is monotone in $\innerproduct{\cdot}{\cdot}_{\Phi_{\tau, \sigma}}$ by
                    \Cref{lemma:monotonyPreconditionedInverse}. Therefore we can repeat the proof of \Cref{prop:PropertiesOfResolvent}, with the only caution of replacing the unweighted norm and inner product with $\|\cdot\|_\P$ and $\langle \cdot, \cdot \rangle _\P$, to  conclude that $\mathcal B  =(\operatorname{Id}+\Phiinv F\textnormal{b})$ is contractive   in $\| \cdot\|_\P$ with constant \begin{align}
                    \frac{\| \P \| R_2 }{\sqrt{(\| \P \| R_2)^2+1}} \leq \frac{ \zet R_2 }{\sqrt{(\zet R_2)^2+1}} = \rho,
                    \end{align}
                    where we used the bound $\|\P\| \leq \zet $ in \Cref{prop:propertiesPhi}. 
                    In other terms, \Aone{} in \Cref{prop:partialContractivityB} 
                    holds with $\Psi_\textnormal{b} = \0$, $\gamma =0$, $\rho_b = \rho$. Then, the contractivity result follows by \Cref{prop:partialContractivityB} (we recall that \Atwo{} in \Cref{prop:partialContractivityB} holds with $\Psi_\textnormal{f} = \0$, $\rho_\textnormal{f}=1$).

                    To find the best step sizes, we note that $\rho$ is decreasing in $\zet$. Thus, the best step sizes are found by minimizing $\zet$ in \eqref{eq:zet}, i.e., maximizing $\min\{\tau,\sigma \}$ subject to $\tau\sigma \|A\|^2(1+\epsilon)^2 \leq 1$.

            \subsubsection{\texorpdfstring{Item (iii): \Cref{alg:chambollePock} under \Cthree{}}{chambollePockC3}}
                 
                 Identical to the proof in \Cref{proof:algo1C2}, with the only caution of replacing the constant $R_2$ from \Cref{lemma:IL1} with $R_3$ from \Cref{lemma:IL2}. 
                \hfill $\square$

        \subsection{\texorpdfstring{Proof of \Cref{lemma:semiContractionBackward}}{semiContractionBackward}}
            \label{sec:proofSemiContractionBackward}
             
                 Let $\tau \sigma \norm{A}^2 < 1$ and, for any $\omega, \omega' \in \R^{n+m}$, define $u = \J_{\Phi_{\tau, \sigma}^{-1}F_\textnormal{b}}(\omega)$ and $u' = \J_{\Phi_{\tau, \sigma}^{-1}F_\textnormal{b}}(\omega')$ which implies $u+ \Phi_{\tau, \sigma}^{-1}F_\textnormal{b}(u) \ni \omega$ and $u'+ \Phi_{\tau, \sigma}^{-1}F_\textnormal{b}(u') \ni \omega'$. Let $u = (u_x,u_y)$ and $u' = (u_x',u_y')$.  Then, for some $v_x\in \partial f(u_x)$ and $v_x' \in \partial f(u_x')$,  we get
                 \allowdisplaybreaks{\begin{IEEEeqnarray*}{rCl}
                    \IEEEeqnarraymulticol{3}{l}{
                     \norm{\omega-\omega'}_{\Phi_{\tau, \sigma}}^2} \nonumber\\
                     & = & \norm{u+ \Phi_{\tau, \sigma}^{-1}F_\textnormal{b}(u)-u' -  \Phi_{\tau, \sigma}^{-1}F_\textnormal{b}(u')}_{\Phi_{\tau, \sigma}}^2\\
                     & = & \norm{u-u'}_{\Phi_{\tau, \sigma}}^2+ \norm{\Phi_{\tau, \sigma}^{-1}(F_\textnormal{b}(u)-F_\textnormal{b}(u'))}_{\Phi_{\tau, \sigma}}^2 \nonumber  + 2\innerproduct{u-u'}{\Phi_{\tau, \sigma}^{-1}(F_\textnormal{b}(u)-F_\textnormal{b}(u'))}_{\Phi_{\tau, \sigma}}\\
                     & = & \norm{u-u'}_{\Phi_{\tau, \sigma}}^2+\innerproduct{\Phi_{\tau, \sigma}^{-1}(F_\textnormal{b}(u)-F_\textnormal{b}(u'))}{F_\textnormal{b}(u)-F_\textnormal{b}(u')} \nonumber  + 2\innerproduct{u-u'}{F_\textnormal{b}(u)-F_\textnormal{b}(u')}\\
                     & \overset{(i)}{\geq} & \norm{u-u'}_{\Phi_{\tau, \sigma}}^2 + \lambdamin{\Phi_{\tau, \sigma}^{-1}}\norm{\begin{bmatrix}
                         v_x+A^\top u_y -v_x'-A^\top u_y'\\
                         -A(u_x-u_x')
                     \end{bmatrix}}^2 \nonumber\\
                     && +2\innerproduct{u_x-u_x'}{v_x-v_x'}+\cancel{2\innerproduct{u_x-u_x'}{A^\top (u_y-u_y')}}-\cancel{2 \innerproduct{u_y-u_y'}{A(u_x-u_x')}}\\
                     & \geq & \norm{u-u'}_{\Phi_{\tau, \sigma}}^2 + \lambdamin{\Phi_{\tau, \sigma}^{-1}}\norm{A(u_x-u_x')}^2 \nonumber +2\innerproduct{u_x-u_x'}{v_x-v_x'}\\
                     & \overset{(ii)}{\geq} &  \norm{u-u'}_{\Phi_{\tau, \sigma}}^2 + \lambdamin{\Phi_{\tau, \sigma}^{-1}}\lambdamin{A^\top A}\norm{u_x-u_x'}^2 \nonumber +2\innerproduct{u_x-u_x'}{v_x-v_x'}\\
                      & \overset{(iii)}{\geq} &  \norm{u-u'}_{\Phi_{\tau, \sigma}}^2 + \lambdamin{\Phi_{\tau, \sigma}^{-1}}\lambdamin{A^\top A}\norm{u_x-u_x'}^2 \nonumber +2 \mf \norm{u_x-u_x'}^2\\
                     & = & \norm{u-u'}_{\Phi_{\tau, \sigma}}^2 +\left (\lambdamin{\Phi_{\tau, \sigma}^{-1}}\lambdamin{A^\top A} +2\mf\right )\norm{\begin{bmatrix}
                         u_x-u_x'\\
                         0
                     \end{bmatrix}}^2\\
                     & \overset{(iv)}{=} & \norm{u-u'}_{\Phi_{\tau, \sigma}}^2 + \norm{u-v}_{\Psi_b}^2\\
                     & = &  \norm{\J_{ \Phi_{\tau, \sigma}^{-1}F_\textnormal{b}}(\omega)-\J_{ \Phi_{\tau, \sigma}^{-1}F_\textnormal{b}}(\omega')}_{\Phi_{\tau, \sigma}}^2+\norm{\J_{\Phi_{\tau, \sigma}^{-1}F_\textnormal{b}}(\omega)-\J_{\Phi_{\tau, \sigma}^{-1}F_\textnormal{b}}(\omega')}_{\Psi_b}^2,
                \end{IEEEeqnarray*}}
                where in (i) the fact that $\Phi_{\tau, \sigma}^{-1}$ is invertible was used, (ii) holds since the smallest singular value is non-negative,  (iii) follows by (strong) convexity of $f$ and (iv) follows from the definition of $\Psi_b$.
            \hfill $\square$

        \subsection{\texorpdfstring{Proof of \Cref{lemma:semiContractionForward}}{semiContractionForward}}
            \label{sec:proofSemiContractionForward}
            We need the following additional result.
            \begin{lemma}
                \label{lemma:semiContractionForwardNonSmoothAdditional}
               Let $\xi_1 = \frac{1}{\sigma} -\tau \|A\|^2$. Then, for all $\omega =(x,y), \omega'=(x',y') \in \R^{n+m}$,   
                \begin{equation*}
                    \innerproduct{F_\textnormal{f}(\omega) - F_\textnormal{f}(\omega'))}{\omega-\omega'} \geq \frac{\Lg \mg}{\Lg + \mg} \norm{\begin{bmatrix}
                        0\\
                        y-y'
                    \end{bmatrix}}^2+\frac{ \xi_1}{\Lg+\mg} \norm{F_\textnormal{f}(\omega)-F_\textnormal{f}(\omega')}_{\Phi_{\tau, \sigma}^{-1}}^2.
                \end{equation*}
            \end{lemma}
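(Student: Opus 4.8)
The plan is to reduce the inequality to a classical two-term coercivity estimate for $g$, together with a block-matrix bound on $\Phi_{\tau,\sigma}^{-1}$. Since $F_\textnormal{f}$ is the forward operator of \Cref{alg:semiSmoothPrimalDual}, we have $F_\textnormal{f}(\omega) = (\0,\nabla g(y))$, so that $F_\textnormal{f}(\omega)-F_\textnormal{f}(\omega') = (\0,\nabla g(y)-\nabla g(y'))$ and therefore
\begin{equation*}
\innerproduct{F_\textnormal{f}(\omega)-F_\textnormal{f}(\omega')}{\omega-\omega'} = \innerproduct{\nabla g(y)-\nabla g(y')}{y-y'} .
\end{equation*}
Because $g$ is $\mg$-strongly convex and $\Lg$-smooth (and $0<\mg\le\Lg<\infty$ whenever \Cref{lemma:semiContractionForward} is invoked), I would next invoke the standard coercivity estimate for strongly convex smooth functions \citep{bauschkeConvexAnalysisMonotone2017}, which interpolates between strong monotonicity and Baillon--Haddad cocoercivity,
\begin{equation*}
\innerproduct{\nabla g(y)-\nabla g(y')}{y-y'} \;\geq\; \frac{\Lg\mg}{\Lg+\mg}\norm{y-y'}^2 \;+\; \frac{1}{\Lg+\mg}\norm{\nabla g(y)-\nabla g(y')}^2 .
\end{equation*}

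The second ingredient is to compare $\norm{\nabla g(y)-\nabla g(y')}^2$ with $\norm{F_\textnormal{f}(\omega)-F_\textnormal{f}(\omega')}_{\Phi_{\tau,\sigma}^{-1}}^2$. Since $\Phi_{\tau,\sigma}\succ 0$ (recall $\tau\sigma\norm{A}^2<1$, equivalently $\xi_1>0$), the Schur-complement formula identifies the lower-right $m\times m$ block of $\Phi_{\tau,\sigma}^{-1}$ as $\left(\frac{1}{\sigma}I_m-\tau AA^\top\right)^{-1}$. From $AA^\top\preccurlyeq\norm{A}^2 I_m$ we get $\frac{1}{\sigma}I_m-\tau AA^\top\succcurlyeq\xi_1 I_m\succ 0$, hence $\left(\frac{1}{\sigma}I_m-\tau AA^\top\right)^{-1}\preccurlyeq\frac{1}{\xi_1}I_m$. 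As $F_\textnormal{f}(\omega)-F_\textnormal{f}(\omega')$ has zero primal component, only this block enters the weighted norm, so that
\begin{equation*}
\norm{F_\textnormal{f}(\omega)-F_\textnormal{f}(\omega')}_{\Phi_{\tau,\sigma}^{-1}}^2 = (\nabla g(y)-\nabla g(y'))^\top\left(\tfrac{1}{\sigma}I_m-\tau AA^\top\right)^{-1}(\nabla g(y)-\nabla g(y')) \;\leq\; \frac{1}{\xi_1}\norm{\nabla g(y)-\nabla g(y')}^2 ,
\end{equation*}
i.e.\ $\norm{\nabla g(y)-\nabla g(y')}^2\geq\xi_1\norm{F_\textnormal{f}(\omega)-F_\textnormal{f}(\omega')}_{\Phi_{\tau,\sigma}^{-1}}^2$. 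An alternative to the explicit block inversion, avoiding any formula, is to use the variational identity $z^\top\Phi_{\tau,\sigma}^{-1}z=\max_{w}\{2w^\top z-w^\top\Phi_{\tau,\sigma}w\}$ and maximize first over the primal block of $w$, which leaves $\max_{w_y}\{2w_y^\top d-\xi_1\norm{w_y}^2\}=\norm{d}^2/\xi_1$ with $d=\nabla g(y)-\nabla g(y')$.

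Substituting this bound into the coercivity estimate and noting that $\norm{y-y'}^2=\norm{(\0,y-y')}^2$ then yields exactly the claimed inequality. I do not expect a genuine obstacle here: the argument is essentially an identity followed by an eigenvalue comparison. The only points requiring a little care are (i) ensuring $\xi_1>0$, which is automatic since $\Phi_{\tau,\sigma}^{-1}$ is in play and hence $\Phi_{\tau,\sigma}\succ 0$, i.e.\ $\tau\sigma\norm{A}^2<1$; and (ii) quoting the precise two-term coercivity estimate for $\mg$-strongly convex, $\Lg$-smooth functions (and checking it degenerates correctly, e.g.\ to Baillon--Haddad cocoercivity as $\mg\to 0$, under the paper's $1/0=\infty$ convention).
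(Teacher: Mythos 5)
Your proposal is correct and follows essentially the same route as the paper: reduce to the standard two-term strong-monotonicity/cocoercivity interpolation inequality for $\nabla g$, then bound the lower-right block of $\Phi_{\tau,\sigma}^{-1}$ by $\frac{1}{\xi_1}I_m$ via the Schur complement. The only difference is presentational — the paper derives the interpolation inequality explicitly through the auxiliary function $h(y)=g(y)-\frac{\mg}{2}\|y\|^2$ rather than citing it, and your identification of the lower-right block of $\Phi_{\tau,\sigma}^{-1}$ as $\left(\frac{1}{\sigma}I_m-\tau AA^\top\right)^{-1}$ is in fact stated more precisely than in the paper's text.
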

            \begin{proof}
                Let $h(\omega) = g(y)-\frac{\mg}{2}\norm{y}^2$. Since  $h$ is $(\Lg-\mg)$-smooth, we have the cocoercivity bound \cite[Th.~18.15(v)]{bauschkeConvexAnalysisMonotone2017}
                \begin{equation*}
                    \langle \ng(y) -\ng(y'), y- y' \rangle = \innerproduct{F_\textnormal{f}(\omega)-F_\textnormal{f}(\omega')}{\omega - \omega'} \geq \frac{1}{\Lg-\mg}\norm{\nabla h(\omega)-\nabla h(\omega')}^2.
                \end{equation*}
                Rearranging the terms gives
                \begin{align*}                \innerproduct{F_\textnormal{f}(\omega) - F_\textnormal{f}(\omega'))}{\omega-\omega'} \geq \frac{\Lg \mg}{\Lg + \mg} \norm{\begin{bmatrix}
                        0\\
                        y-y'
                    \end{bmatrix}}^2+\frac{1}{\Lg+\mg} \norm{F_\textnormal{f}(\omega)-F_\textnormal{f}(\omega')}^2.
                \end{align*}
                The conclusion follows because $F_\textnormal{f}= (\0,\nabla g)$ and, by computing the block inverse of $\Phi$, the right lower block of $\Phiinv$ is $(\frac{1}{\sigma}I - \tau AA^\top)$, so that 
                \begin{align*}
                    \norm{F_\textnormal{f}(\omega)-F_\textnormal{f}(\omega')} \leq \xi_1 \norm{F_\textnormal{f}(\omega)-F_\textnormal{f}(\omega')}_\Phiinv.
                \end{align*}
            \end{proof}
            We are ready to prove \Cref{lemma:semiContractionForward}.
            \begin{proof}
                For any $\omega, \omega' \in \R^{n+m}$, with $\omega = \begin{bmatrix}
                    x\\
                    y
                \end{bmatrix}$ and $\omega' = \begin{bmatrix}
                    x'\\
                    y'
                \end{bmatrix}$, we have
                \begin{IEEEeqnarray*}{rCl}
                    \IEEEeqnarraymulticol{3}{l}{
                     \norm{\left(I-\Phi_{\tau, \sigma}^{-1}F_\textnormal{f}\right)(\omega)-\left(I- \Phi_{\tau, \sigma}^{-1}F_\textnormal{f}\right)(\omega')}_{\Phi_{\tau, \sigma}}^2} \nonumber\\
                     & = & \norm{\omega - \omega'}_{\Phi_{\tau, \sigma}}^2 + \norm{\Phi_{\tau, \sigma}^{-1}(F_\textnormal{f}(\omega)-F_\textnormal{f}(\omega'))}_{\Phi_{\tau, \sigma}}^2  \nonumber- 2\innerproduct{\omega-\omega'}{F_\textnormal{f}(\omega)-F_\textnormal{f}(\omega')}\\
                     & \leq & \norm{\omega - \omega'}_{\Phi_{\tau, \sigma}}^2-\frac{2L_g \mg}{\Lg+\mg}\norm{\begin{bmatrix}
                         0\\
                         y-y'
                     \end{bmatrix}}^2+\left (1-\frac{2 \xi_1}{\Lg+\mg} \right ) \norm{F_\textnormal{f}(\omega)-F_\textnormal{f}(\omega')}_{\Phi_{\tau, \sigma}^{-1}},
                \end{IEEEeqnarray*}
                where in the last line \Cref{lemma:semiContractionForwardNonSmoothAdditional} and the fact that $\norm{\Phi_{\tau, \sigma}^{-1}(F_\textnormal{f}(\omega)-F_\textnormal{f}(\omega'))}_{\Phi_{\tau, \sigma}}^2 = \norm{F_\textnormal{f}(\omega)-F_\textnormal{f}(\omega')}_{\Phi_{\tau, \sigma}^{-1}}^2$ was used. We have to distinguish between two cases:
                \begin{itemize}
                    \item $1-\frac{2\xi_1 }{\Lg+\mg}\leq 0$:\\
                    In this case we can use the bound $\norm{F_\textnormal{f}(\omega)-F_\textnormal{f}(\omega')}_{\Phi_{\tau, \sigma}^{-1}}^2\geq \frac{\mg^2}{\xi_2}\norm{\begin{bmatrix}
                        0\\
                        y-y'
                    \end{bmatrix}}^2$, with $\xi_ 2= \frac{1}{\sigma}I - \tau \mu_A \geq 
                \lambda_{\max}(\frac{1}{\sigma}I - \tau A A^\top) $, which follows again by definition of $F_\textnormal{f}$ and block inverse of $\P$. This implies
                    \begin{IEEEeqnarray*}{rCl}
                        \IEEEeqnarraymulticol{3}{l}{
                         \norm{\left(I-\Phi_{\tau, \sigma}^{-1}F_\textnormal{f}\right)(\omega)-\left(I- \Phi_{\tau, \sigma}^{-1}F_\textnormal{f}\right)(\omega')}_{\Phi_{\tau, \sigma}}^2} \nonumber\\
                         & \leq & \norm{\omega - \omega'}_{\Phi_{\tau, \sigma}}^2 -\left ( \frac{2 \Lg \mg}{\Lg +\mg}+\frac{2\mg^2 \xi_1} { \xi_2(\Lg+\mg)}-\frac{\mg^2}{\xi_2}\right )\norm{\begin{bmatrix}
                             0\\
                             y-y'
                         \end{bmatrix}}^2.\nonumber
                    \end{IEEEeqnarray*}
                    \item $1-\frac{2\xi_1}{\Lg+\mg}> 0$:\\
                   Using $\norm{F_\textnormal{f}(\omega)-F_\textnormal{f}(\omega')}_{\Phi_{\tau, \sigma}^{-1}}^2 \leq \frac{\Lg^2}{\xi_1} \norm{\begin{bmatrix}
                        0\\
                        y-y'
                    \end{bmatrix}}^2$ we get
                    \begin{align}
                         & \hphantom{{}={}} \norm{\left(I-\Phi_{\tau, \sigma}^{-1}F_\textnormal{f}\right)(\omega)-\left(I- \Phi_{\tau, \sigma}^{-1}F_\textnormal{f}\right)(\omega')}_{\Phi_{\tau, \sigma}}^2 \nonumber
                         \\
                         & \leq  \norm{\omega - \omega'}_{\Phi_{\tau, \sigma}}^2 -\left ( \frac{2 \Lg \mg}{\Lg +\mg}+\frac{2L_g^2}{\Lg+\mg}-\frac{\Lg^2}{\xi_1}\right )\norm{\begin{bmatrix}
                             0\\
                             y-y'
                         \end{bmatrix}}^2.\label{eq:usefulbound}
                    \end{align}
                \end{itemize}
               The last addend is negative only if $\xi_1> \frac{\Lg}{2}$, or equivalently \begin{align*}\sigma < \frac{2}{2\tau \|A\|^2+\Lg}.\end{align*}
            
                Therefore, if the latter bounds on the step sizes holds, the forward step fulfills
                \begin{equation*}
                    \norm{\left(I-\Phi_{\tau, \sigma}^{-1}F_\textnormal{f}\right)(\omega)-\left(I- \Phi_{\tau, \sigma}^{-1}F_\textnormal{f}\right)(\omega')}_{\Phi_{\tau, \sigma}}^2 \leq \norm{\omega - \omega'}_{\Phi_{\tau, \sigma}}^2-\norm{\omega - \omega'}_{\Psi_{\textnormal f}}^2,
                \end{equation*}
                where ${\Psi_{\textnormal f}} = \diag (\0, \gamma_y I_m)$ and 
                \begin{equation*}
        	0< \gamma_y = \begin{cases}
        			\frac{2 \Lg \mg}{\Lg +\mg}+\frac{\mg^2(2\xi_1-\Lg-\mg)}{\xi_2(\Lg+\mg)} & \quad \text {if } \  \xi_1 \geq \frac{\Lg + \mg}{2} 
        			\\
        			\frac{2 \Lg \mg}{\Lg +\mg}-\frac{\Lg^2(\Lg +\mg-2\xi_1)}{(\Lg+\mg)\xi_1} & \quad \text {if } \ \xi_1 <  \frac{\Lg + \mg}{2},
        		\end{cases}
        	\end{equation*} 
                which concludes the proof.
            \end{proof}
            
        \subsection{\texorpdfstring{Proof of \Cref{thm:semiSmoothAlgorithm}}{semiSmooth}}
         As in the proof of \Cref{thm:chambollePock}, for any $\tau, \sigma >0$, the fixed points of \Cref{alg:semiSmoothPrimalDual} coincide with the saddle-points of \eqref{eq:bilinearSaddlePointProblem}. 
         Furthermore, whenever $\sigma \tau \|A\|^2 <1$ (which is ensured by the assumptions on the step sizes in \Cref{thm:semiSmoothAlgorithm}), $\P \succ 0$ and \Cref{alg:semiSmoothPrimalDual} can be recast as the forward-backward iteration in \eqref{eq:precFB}, with $F_\textnormal{f}(\omega) = (\0,\ng(y))$, $F_\textnormal{b}(\omega) =(\nf(x)+A^\top y, -Ax)$.

             \subsubsection{\texorpdfstring{Item(i): \Cref{alg:semiSmoothPrimalDual} under \Cone{} or \Ctwo}{semiSmoothC1}}

             Under either \Cone{} or \Ctwo{}, $\mathcal B$ satisfies \Aone{} in \Cref{prop:partialContractivity} with $\Phi_\textnormal{b}$ as in \Cref{lemma:semiContractionBackward}, and $\mathcal{F}$ satisfies \Atwo{} in \Cref{prop:partialContractivity} with $\Phi_\textnormal{f}$ as in \Cref{lemma:semiContractionForward}. Furthermore, 
             \begin{align*}
                 \Psi_\textnormal{b}+\Psi_\textnormal{f} = \diag(\gamma_x I_n, \gamma_y I_m) & \geq \frac{\min \{\gamma_x,\gamma_y\}}{\| \P \| + \gamma_x} \Psi_{\tau, \gamma} 
                \\
                & \geq \frac{\min \{\gamma_x,\gamma_y\}}{ \zet + \gamma_x} (\P+ \Psi_\textnormal{b}),
             \end{align*}
             and the conclusion readily follows by \Cref{prop:partialContractivity}.

            \subsubsection{\texorpdfstring{Item (ii): \Cref{alg:semiSmoothPrimalDual} under \Cthree}{semiSmoothC3}}
             
                     First, we  show that the backward step $\J_{\Phi_{\tau, \sigma}^{-1}F_\textnormal{b}}$ is contractive. The proof is analogous  to \Cref{lemma:IL2} and \Cref{thm:chambollePock}(iii). Define the following matrix $\Psi_{\varepsilon}\in \R^{2n\times 2n}$, for $\varepsilon>0$ small enough to be chosen,
                    \begin{equation*}
                        \Psi_{\varepsilon} = \begin{bmatrix}
                            I_n &\varepsilon A^\top \\
                            -\varepsilon A &I_n
                        \end{bmatrix}.
                    \end{equation*}
                    Then we have, for $\omega =(x,y)$, $\omega' = (x',y') \in \R^{2n}$,
                    \allowdisplaybreaks{\begin{IEEEeqnarray*}{rCl}
                            \IEEEeqnarraymulticol{3}{l}{\norm{\Fb(\omega)-\Fb(\omega')}\norm{\Psi_{\varepsilon}}\norm{\omega - \omega'}}\\
                            & \geq & \norm{\begin{bmatrix}
                                    \nabla f(x)- \nabla f(x')+A^\top (y-y')\\
                                    -A(x-x')
                            \end{bmatrix}}\norm{\begin{bmatrix}
                                    x-x'+\varepsilon A^\top (y-y')\\
                                    y-y'-\varepsilon A(x-x')
                            \end{bmatrix}}\\
                            & \overset{(i)}{\geq} & \innerproduct{\begin{bmatrix}
                                    \nabla f(x)- \nabla f(x')+A^\top (y-y')\\
                                    -A(x-x')
                            \end{bmatrix}}{\begin{bmatrix}
                                    x-x'+\varepsilon A^\top (y-y')\\
                                    y-y'-\varepsilon A(x-x')
                            \end{bmatrix}}\\
                            & = & \innerproduct{\nabla f(x)-\nabla f(x')}{x-x'}+\varepsilon \innerproduct{\nabla f(x)-\nabla f(x')}{A^\top (y-y')} +\varepsilon \norm{A^\top (y-y')}^2 +\varepsilon \norm{A(x-x')}^2\\
                            & \overset{(ii)}{\geq} & \frac{1}{\Lf}\norm{\nabla f(x)-\nabla f(x')}^2+\varepsilon \innerproduct{\nabla f(x)-\nabla f(x')}{A^\top (y-y')} +\varepsilon \norm{A^\top (y-y')}^2+\varepsilon \norm{A(x-x')}^2\\
                            & \overset{(iii)}{\geq} & \left (  \frac{1}{\Lf}-\frac{\norm{A}\varepsilon \delta}{2}\right)\norm{\nabla f(x)-\nabla f(x')}^2 + \varepsilon \left ( \lambdamin{AA^\top }-\frac{\norm{A}}{2 \delta}\right )\norm{y-y'}^2\\
                            && + \varepsilon \left ( \lambdamin{A^\top A}- \frac{\norm{A}}{2\delta}\right )\norm{x-x'}^2\\
                            & \overset{(iv)}{\geq} & C(\varepsilon, \delta)\norm{\begin{bmatrix}
                                    x-x'\\
                                    y-y'
                            \end{bmatrix}}^2,
                    \end{IEEEeqnarray*}}
                    where (i) follows from Cauchy-Schwarz, (ii) from cocoercivity of $\nabla f$, (iii) from Young's inequality and (iv) holds if $\delta > \frac{\norm{A}\mu_A}{2}$, $\varepsilon< \frac{2}{\Lf \norm{A}\delta}$ and 
                    \begin{equation*}
                        C(\varepsilon, \delta) = \varepsilon \left (\mu_A -\frac{\norm{A}}{2\delta}\right ) >0.
                    \end{equation*}
                  
                    Then the proof follows as for   \Cref{proof:algo1C2}. In particular, using $\norm{\Psi_{\varepsilon}} \leq 1+\varepsilon \norm{A}$ we have that $\Fb$ is inverse Lipschitz with constant $\frac{1}{R_3'}$, $R_3' \coloneqq\frac{1+\varepsilon \norm{A}}{C(\varepsilon, \delta)}$, thus $\Phi_{\tau,\sigma}^{-1}\Fb$ is inverse Lipschitz in $\norm{\cdot}_{\Phi_{\tau,\sigma}}$ with constant $\frac{R_3'}{\norm{\Phi_{\tau,\sigma}}}$. Since $\Phi_{\tau,\sigma}^{-1}\Fb$ is maximally monotone in $\norm{\cdot}_{\Phi_{\tau,\sigma}}$, the backward step $\mathcal B$ is contractive in $\|\cdot \|_\P$ with parameter$\frac{\zet R_3'}{\sqrt{(R_3'\zet)^2+1}}$ (see \Cref{prop:PropertiesOfResolvent}(ii)), namely \Aone{} in \Cref{prop:partialContractivityB} holds with $\rho_\textnormal{b} =\rho$, $\Phi_\textnormal{b} =\0$.
                   
                   \par It is left to show, that the forward step $\mathcal F = I-\Phi_{\tau, \sigma}^{-1}F_\textnormal{f}$ is nonexpansive, namely that \Atwo{} in \Cref{prop:partialContractivityB} holds with $\rho_\textnormal{f}=1$, $\Psi_\textnormal{f} = \0$. If $\Lg =0$, this holds  trivially (we recall that $F_\textnormal{f} = (\0,\ng)$). If $\Lg \neq 0$, then this holds whenever $\sigma < \frac{2}{\Lg+2\|A\|}$ by  \eqref{eq:usefulbound}.
                   Then, the contractivity result in \Cref{thm:semiSmoothAlgorithm}(ii) follows by \Cref{prop:partialContractivity}.

                   To find the best step sizes, we note that $\rho$ is decreasing in $\zet$. Thus, the best step sizes are found by minimizing $\zet$ in \eqref{eq:zet}, i.e., maximizing $\min\{\tau,\sigma \}$ subject to $ \tau \sigma \|A\|^2+ \frac{\Lg}{2} \sigma \leq 1$ and $\tau \sigma \|A\|^2(1+\epsilon)^2\leq 1$. Clearly $\sigma= \tau$ at the solution. Hence the best step size $\tau = \sigma$ is obtained as the maximum value that satisfies $  \sigma^2 \|A\|^2+ \frac{\Lg}{2} \sigma \leq 1$ and $ \sigma^2 \|A\|^2(1+\epsilon)^2\leq 1$, or equivalently $\sigma \leq  \frac{\sqrt{\Lg^2+16\|A\|^2}-\Lg}{4\|A\|^2}$ and $ \sigma \leq  \frac{1}{(1+\epsilon) \|A\|} $, which is the claim. \hfill $\square$

        \subsection{Proof of \Cref{lemma:PDGforward}}

        First, let us assume $\Lf, \Lg \neq 0$. Let us recall that, as in the proof of \Cref{lemma:semiContractionForward}, for any $\omega= (x,y),\omega'=(x',y')\in\R^{n+m}$,
			\begin{align*}
				& \hphantom{{}\geq{}} \langle \w - \w', F(\w)- F(\w') 
				\rangle  
				\\& \geq \frac{\Lf \mf }{\Lf +\mf} \| x- x'\|^2 + \frac{1}{\Lf+\mf} \| \nf(x) -\nf(x')\| ^2 + \frac{\Lg \mg }{\Lg +\mg} \| y- y'\|^2 + \frac{1}{\Lg+\mg} \| \ng(y) -\ng(y')\| ^2.
			\end{align*}
        
      Therefore,
      \begin{align*}
      & \hphantom{{}\geq{}} 	\| \F (\w) - \F(\w')\|_\P
      \\      
      & \leq \| \w{}- \w ' \|_{\P}- 2\frac{\Lf \mf }{\Lf +\mf} \| x- x'\|^2  - 2  \frac{\Lg \mg }{\Lg +\mg} \| y- y'\|^2  -2  \frac{1}{\Lf+\mf} \| \nf(x) -\nf(x')\| ^2
       \\ 
      & \qquad - 2 \frac{1}{\Lg+\mg} \| \ng(y) -\ng(y')\| ^2  + \| \Ff (\w) - \Ff(\w' ) \| ^2_{\Phiinv}.
       \end{align*}
       
       To bound the last three addends, we look for the smallest (possibly negative) constant $\alpha_x$, $\alpha_y$ , such that
       \begin{align}
       	\Phiinv - \diag \left ( 2 \textstyle  \frac{1}{\Lf+\mf}I_n,  2 \frac{1}{\Lg+\mg} I_m \right ) \preccurlyeq \diag(\alpha_x I_n, \alpha_y I_m)
       \end{align}
       (since $\Phiinv \succ 0$, this implies $2 \textstyle  \frac{1}{\Lf+\mf} + \alpha_x > 0$, $2 \frac{1}{\Lg+\mg} +\alpha_y >0$).
       By double application of the Schur complement, the above condition is equivalent to 
       \begin{align}\label{eq:useful4}
       	\P - \diag \left (  \textstyle  \frac{\Lf+\mf }{2+\alpha_x(\Lf+\mf)}I_n,  \frac{\Lg+\mg}{2+\alpha_y(\Lg+\mg)} I_m \right )   \succcurlyeq 0.
       \end{align}
       By computing the block-matrix determinant in \eqref{eq:useful4}, we obtain that a sufficient condition for this to hold is that, for any $\nu>0$,
       \begin{align*}
       	 \frac{1}{\tau} -  \textstyle  \frac{\Lf+\mf }{2+\alpha_x(\Lf+\mf)} & \geq \|A\| \nu
       	 \\
       	 \frac{1}{\sigma } - \textstyle  \frac{\Lg+\mg }{2+\alpha_y(\Lg+\mg)} &  \geq \|A\|\frac{1}{\nu}.
       \end{align*}
      	Solving for $\alpha_x$ and $\alpha_y$ (noting that $1-\tau \|A\| >0$), we obtain the smallest values
      	\begin{align*}
      		\alpha_x  = \frac{  \tau }{1- \tau \|A\|\nu}  - \frac{2}{\Lf+\mf}, \qquad
      		\alpha_y  = \frac{  \sigma }{1- \sigma \|A\|\textstyle\frac{1}{\nu}}  - \frac{2}{\Lg+\mg},
      	\end{align*}
      	and get 
      	      \begin{align*}
      		& \hphantom{{}\geq{}} 	\| \F (\w) - \F(\w')\|_\P
      		\\      
      		& \leq \| \w{}- \w ' \|_{\P}- 2\frac{\Lf \mf }{\Lf +\mf} \| x- x'\|^2  - 2  \frac{\Lg \mg }{\Lg +\mg} \| y- y'\|^2  +\alpha_x  \| \nf(x) -\nf(x')\| ^2
      		+\alpha_y \| \ng(y) -\ng(y')\| ^2 .
      	\end{align*}
      	Note that $\alpha_x$, $\alpha_y$  might be positive or negative, depending on the step sizes $\tau$, $\sigma$.
      In particular, if $ \tau \leq \frac {2}{\Lf+\mf+2\|A\|\nu }$, then $\alpha_x$ is nonpositive, and we can use the bound $ \| \nf(x) - \nf(x') \| \geq \mf \|x - x '\|$, derived from strong convexity of $f$ (with $\mf$ possibly zero); and if $\tau  > \frac {2}{\Lf+\mf+2\|A\|\nu} $, then $\alpha_x$ is positive, and we use the bound $\| \nf(x) - \nf(x') \| \leq \Lf \| x - x'\| $, and similarly for $\alpha_y$. Finally, we get
      \begin{align}
	\| \F (\w) - \F(\w')\|_\P
      	& \leq \| \w{}- \w ' \|_{\P} - \beta_x \|x-x'\|^2 - \beta_y \|y-y'\|^2,
      \end{align}
      where 
      \begin{align*}
      	\beta_x & = 
     \begin{cases}
     	  2 \frac{ \Lf \mf }{\Lf+\mf} - \alpha_x \mf^2 & \quad \text {if  } \tau \leq \frac {2}{\Lf+\mf+2\|A\|\nu}
     	\\
     	2 \frac{ \Lf \mf }{\Lf+\mf} - \alpha_x \Lf^2 & \quad \text {if  } \tau > \frac {2}{\Lf+\mf+2\|A\|\nu}
     \end{cases}
     \\
         	\beta_y & = 
   \begin{cases}
   	2 \frac{ \Lg \mg }{\Lg+\mg} - \alpha_y \mg^2 & \quad \text {if  } \sigma \leq \frac {2}{\Lg+\mg+2\|A\|\textstyle\frac{1}{\nu}}
   	\\
   	2 \frac{ \Lg \mg }{\Lg+\mg} - \alpha_y \Lg^2 & \quad \text {if  } \sigma > \frac {2}{\Lg+\mg+2\|A\|\textstyle\frac{1}{\nu}},
   \end{cases}  
      \end{align*}
      and it is easily verified that $\beta_x, \beta_y > 0 $ whenever $\mf,\mg >0$ and $\tau <  \frac{2}{\Lf + 2\|A\|\nu} $, $\sigma < \frac{2}{\Lg + 2\|A\|\textstyle\frac{1}{\nu}}$ (and $\beta_x, \beta_y = 0 $ if $\tau = \frac{2}{\Lf + 2\|A\|\nu} $, $\sigma = \frac{2}{\Lg + 2\|A\|\textstyle\frac{1}{\nu}}$).
      
      Finally, let us note that the statement still holds with $\beta_x = 0$ whenever $\Lf = 0$, and with $\beta_y = 0 $ whenever $\Lg = 0$ (repeat the same proof by omitting the terms corresponding to the zero Lipschitz constants).  \hfill $\square$

        \subsection{\texorpdfstring{Proof of \Cref{thm:smoothAlgorithm}}{smooth}}

            For any $\tau, \sigma>0$, the fixed points of \Cref{alg:smoothForwardBackward} coincide with the saddle points of \eqref{eq:bilinearSaddlePointProblem}, as in the proof of \Cref{thm:chambollePock}. 
             Furthermore, whenever $\sigma \tau \|A\|^2 <1$ (which is ensured by the assumptions on the step sizes in \Cref{thm:smoothAlgorithm}),    $\P \succ 0$ and \Cref{alg:smoothForwardBackward} can be recast as the forward-backward iteration in \eqref{eq:precFB}, with $F_\textnormal{f} = (\nf,\ng)$, $F_\textnormal{b} = (A^\top y, -Ax)$.

            \subsubsection{\texorpdfstring{ Item(i): \Cref{alg:smoothForwardBackward} under \Cone{}}{smoothC1}}

             Under \Cone{}, the forward step $\mathcal F$ is contractive, as a direct consequence of \Cref{lemma:PDGforward}.   In particular, its  contractivity rate $\rho>0$ in $\| \cdot\|_\P$ can be found by solving
      \begin{align*}
         \P - \diag( \beta_x I_n, \beta_y I_m) \preccurlyeq \rho \Phi,
      \end{align*}
      which, via a Schur complement argument,  can be relaxed as
      \begin{align*}
        \beta_x - \frac{1}{\tau}   (1-\rho) \geq (1-\rho) \|A\| \nu, \qquad 
         \beta_y - (1- \rho )\frac{1}{\sigma}   \geq (1-\rho)\|A\| \textstyle \frac{1}{\nu },
      \end{align*}
      equivalently 
      \begin{align*}
      \rho \geq 1 - \min \left\{ \frac{\beta_x \tau }{1+\tau\|A\|\nu},  \frac{\beta_y \sigma }{1+\sigma\|A\|\textstyle \frac{1}{\nu }} \right \}.
      	      \end{align*}
    Hence, \Atwo{} in \Cref{prop:partialContractivityB} holds with $\Psi_\textnormal{f} =\0$ and $\rho_\textnormal{f} = \rho$. 

    It is only left to show that the backward step $\mathcal B$ is nonexpansive, namely that \Aone{} in \Cref{prop:partialContractivityB} holds with $\Psi_\textnormal{b} =\0$ and $\rho_\textnormal{b} = 1$. This is a consequence of the fact that the operator $\Phiinv F_\textnormal{b}$ is monotone in $\langle \cdot, \cdot \rangle_\Phi$ (since $F_\textnormal{b}$ is monotone in $\langle \cdot, \cdot \rangle$ and by \Cref{lemma:monotonyPreconditionedInverse}), and hence its resolvent $\mathcal B$ is nonexpansive in $\| \cdot \|_\P $ \cite[Prop.~23.8]{bauschkeConvexAnalysisMonotone2017}. Then the contractivity result follows by \Cref{prop:partialContractivityB}. 

    
            \subsubsection{\texorpdfstring{Item(ii):  \Cref{alg:smoothForwardBackward} under \Ctwo{}}{smoothC2}}

            By specializing \Cref{lemma:semiContractionBackward} to the case $f=0$, we obtain that \Aone{} in \Cref{prop:partialContractivity} holds with $\Psi_\textnormal{b} = \diag \left(\frac{\mA}{\zet}I_n,\0 \right)$. 

            On the other hand, \Atwo{} holds with $\Psi_\textnormal{f}$ as in  \Cref{lemma:PDGforward} (where $\beta_x = 0$ in general). 
            
            Finally, 
            \begin{align*}
                \Psi_\textnormal{b}+ \Psi_\textnormal{f} = \diag \left(\frac{\mA}{\zet}I_n, \beta_y I_m \right) & \succeq \frac{\min \left\{ \frac{\mA}{\zet}, \beta_y \right\} }{\| \Psi_\textnormal{b} +\P \|  } (\Psi_\textnormal{b} +\P)
                \\
                & \geq \frac{\min \left\{ \frac{\mA}{\zet}, \beta_y \right\} }{\zet+\mA } (\Psi_\textnormal{b} +\P),
            \end{align*}
    and the conclusion readily follows by \Cref{prop:partialContractivity}.

            \subsubsection{\texorpdfstring{Item(iii): \Cref{alg:smoothForwardBackward} under \Cthree{}}{smoothC3}}

            We start by an auxiliary result, which refines \Cref{lemma:IL1}. 
    
\begin{lemma}
        \label{lemma:inverseLipschitzBackward}
        Let \Cthree{} hold. Then, the operator $F_{\text{b}} (\omega)= (A^\top y,-Ax)$ is $\ \sqrt{\mu_A}$-inverse Lipschitz.
    \end{lemma}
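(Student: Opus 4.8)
The plan is to exploit that $F_{\textnormal b}$ is a \emph{linear} operator, so that $F_{\textnormal b}(\omega) - F_{\textnormal b}(\omega') = F_{\textnormal b}(\omega - \omega')$, and the inverse Lipschitz inequality reduces to the single bound $\|F_{\textnormal b}(z)\| \geq \sqrt{\mu_A}\,\|z\|$ for every $z \in \R^{2n}$ (recall that under \Cthree{} we have $n = m$). Writing $z = (u,v)$ with $u,v \in \R^n$, by definition $F_{\textnormal b}(z) = (A^\top v, -Au)$, and because the two blocks are orthogonal,
\begin{equation*}
    \|F_{\textnormal b}(z)\|^2 = \|A^\top v\|^2 + \|Au\|^2 = v^\top (AA^\top) v + u^\top (A^\top A) u .
\end{equation*}

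Next I would invoke \Cthree{}, which guarantees $\lambdamin{A^\top A} = \lambdamin{AA^\top} = \mu_A$. This gives $u^\top (A^\top A) u \geq \mu_A \|u\|^2$ and $v^\top (AA^\top) v \geq \mu_A \|v\|^2$, hence
\begin{equation*}
    \|F_{\textnormal b}(z)\|^2 \geq \mu_A \big(\|u\|^2 + \|v\|^2\big) = \mu_A \|z\|^2 .
\end{equation*}
Taking square roots yields $\|F_{\textnormal b}(\omega) - F_{\textnormal b}(\omega')\| \geq \sqrt{\mu_A}\,\|\omega - \omega'\|$ for all $\omega,\omega'$, which is exactly the statement that $F_{\textnormal b}$ is $\sqrt{\mu_A}$-inverse Lipschitz in the sense of \Cref{def:InverseLipschitz} (i.e., $\tfrac{1}{L} = \sqrt{\mu_A}$).

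There is no real obstacle here; the lemma is essentially a one-line consequence of the block structure. The only point that deserves a word of care is that the equality $\lambdamin{A^\top A} = \lambdamin{AA^\top}$ genuinely uses the square, full-rank hypothesis in \Cthree{} — in general $A^\top A$ and $AA^\top$ share only their \emph{nonzero} spectrum, so without $n=m$ one of the two minimal eigenvalues could be $0$ and the bound would collapse. This refinement of \Cref{lemma:IL1} (with $R_3' = 1/\sqrt{\mu_A}$ independent of $\tau,\sigma$, and $\Psi_{\textnormal f} = \0$) is what feeds into the contractivity of the backward step in \Cref{thm:smoothAlgorithm}(iii), exactly as in the proof of \Cref{thm:chambollePock}(iii).
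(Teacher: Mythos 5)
Your proof is correct: since $F_{\textnormal b}$ is linear, the inverse Lipschitz property reduces to $\|F_{\textnormal b}(z)\|^2=\|Au\|^2+\|A^\top v\|^2\geq \lambdamin{A^\top A}\|u\|^2+\lambdamin{AA^\top}\|v\|^2=\mu_A\|z\|^2$, which is exactly the one-line computation the lemma calls for (the paper in fact omits the proof of this auxiliary lemma, presumably because it is this immediate). Your side remark is also on point: unlike \Cref{lemma:IL2}, which needs the skewed test matrix $\Psi_\varepsilon$ to handle the gradient terms, the pure bilinear block here requires no such device, and the hypothesis $n=m$ with $\lambdamin{A^\top A}=\lambdamin{AA^\top}=\mu_A>0$ from \Cthree{} is precisely what prevents either block bound from degenerating.
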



             Note that  $\Phi_{\tau,\sigma}^{-1}\Fb$ is monotone in $\innerproduct{\cdot}{\cdot}_{\Phi_{\tau,\sigma}}$ by \Cref{lemma:monotonyPreconditionedInverse}. Furthermore, $F_\textnormal{b}$ is $\mA$-inverse Lipschitz according to \Cref{lemma:inverseLipschitzBackward}, and hence $\Phi_{\tau, \sigma}^{-1}F_\textnormal{b}$ is inverse Lipschitz in $\innerproduct{\cdot}{\cdot}_{\Phi_{\tau,\sigma}}$ with constant $\frac{\sqrt{\mA}}{\norm{\Phi_{\tau, \sigma}}}$. Hence, the backward step $\J_{\Phi_{\tau,\sigma}^{-1}F_\textnormal{b}}$ is contractive in $\|\cdot\|_\Phi$ with factor $\frac{\norm{\Phi_{\tau,\sigma}}}{\sqrt{\norm{\Phi_{\tau,\sigma}}^2+\mA}} \leq \frac{\zet} {\sqrt{ \zet^2+\mA}}$ (the proof is identical to that of  \Cref{prop:PropertiesOfResolvent}, with the only caution of replacing $\langle \cdot, \cdot \rangle$ and $\|\cdot\|$ with $\langle \cdot, \cdot \rangle_\P$ and $\|\cdot\|_\P$). Therefore, \Aone{} in \Cref{prop:partialContractivityB} is satisfied with $\Psi_\textnormal{b}= \0$, $\rho_\textnormal{b} = \rho$. 
             
             On the other hand, by \Cref{lemma:PDGforward} (with $\beta_x = \beta_y =0$), \Atwo{} in 
            \Cref{prop:partialContractivityB} is satisfied with $\Psi_\textnormal{f}=\0$, $\rho_\textnormal{f} = 1$ (i.e., the forward step is nonexpansive). Then, the contractivity result  follows by \Cref{prop:partialContractivity}. 
            
            To find the best step sizes, we note that $\rho$ is decreasing in $\zet$. Thus, the best step sizes are found by minimizing $\zet$ in \eqref{eq:zet}, i.e., maximizing $\min\{\tau,\sigma \}$ subject to 
            $\tau \leq \frac{2}{\Lf+ +2\|A\|\nu}$, $\sigma \leq \frac{2}{\Lg +2\|A\|\frac{1}{\nu}}$, $\tau \sigma \|A\|^2 (1+\epsilon)^2 \leq 1$. Clearly  $\sigma = \tau$ at a solution (otherwise, there exist $\tau' = \sigma' $ satisfying the step size bounds for some $\nu$, such that $\min \{ \sigma',\tau'\} > \min \{\sigma,\tau\}$). Therefore, the the optimal $\bar  \nu$ is obtained by solving for $\nu$ the equation $\tau = \frac{2}{\Lf +2\|A\| \nu } =   \frac{2}{\Lg +2\|A\|\frac{1}{\nu}} = \sigma$.
            
            Note that the bound $\tau \sigma \|A\|^2 (1+\epsilon)^2 \leq 1$ is used to ensure $\P \succ 0$ when both $\Lf$ and $\Lg$ are zero. For this particular case, similar considerations to \Cref{rem:degeneratePrec} hold for the choice of $\epsilon$; otherwise, $\epsilon = 0 $ can be chosen (since any $\epsilon$ small enough would be inconsequential for the bounds and optimal choice of step sizes). \hfill $\square$

\section{Extensions}
    \label{app:extensions}
        \subsection{Gradient Descent-Ascent}
            \label{sec:GradientDescentAscent}
            \begin{algorithm}
                \caption{Primal-dual gradient method}
                \label{alg:primalDualGradient}
                \begin{algorithmic}[1]
                    \Require  $x^0\in \R^n, y^0\in \R^m$, step size $\alpha >0$
                    \For{$k = 0, 1, \cdots$}
                        \State $x^{k+1} = x^k - \alpha \left (\nabla f(x^k)+A^\top y^k \right)$
                        \State $y^{k+1} = y^k - \alpha \left (\nabla g(y^k)-Ax^k \right)$
                    \EndFor
                \end{algorithmic}
            \end{algorithm}

            We assume that \Ctwo{} holds (namely,  that $g$ is $\Lg$-smooth and $\mg$-strongly convex, and $A$ has full column rank) and that $f$ is $\Lf$-smooth. Under these assumptions, the R-linear convergence of the  \gls{PDG} method for small-enough step sizes was studied first proved by \cite{duLinearConvergencePrimalDual2019}. Here, we show that the \gls{PDG} method is actually \emph{contractive} (hence, converges Q-linearly). The proof is very simple: we show that the saddle-point operator in \eqref{eq:saddlePointOperator} is strongly monotone and Lipschitz continuous  in a weighted space. Then the contractivity of the method follows by known results for the forward step, e.g., \cite[Prop.~26.16]{bauschkeConvexAnalysisMonotone2017}.
            
            
            To streamline the presentation,  we consider the algorithm in \Cref{alg:primalDualGradient}, which only uses one step size $\alpha>0$ (instead of two step sizes $\tau$, $\sigma$, for which the analysis would be analogous).

            Let us define 
            \begin{equation}\label{eq:Phieta1}
                \Phi_{\eta} = \begin{bmatrix}
                    I_n &-\eta A^\top \\
                    -\eta A &I_m,
                \end{bmatrix}
            \end{equation}
            where $\eta >0$ is a parameter to be chosen, and $\Phi_\eta \succ 0 $ if $\eta < \frac{1}{\|A\|}$.

            \begin{lemma}
                \label{lemma:stronglyMonotoneForwardStep}
                If   $\eta >0$ is chosen small enough,  then $\Phi_\eta \succ 0$ and there is $\mu_\eta>0$ such that $F$ in \eqref{eq:saddlePointOperator} is $\mu_\eta$-strongly monotone in the space weighted by $\Phi_\eta$, i.e., for all $\omega,\omega' \in \R^{n+m}$
                \begin{equation*}
                    \innerproduct{F(\omega)-F(\omega')}{\omega-\omega'}_{\Phi_{\eta}} \geq \mu_\eta\norm{\omega-\omega'}^2_{\Phi_\eta}.
                \end{equation*}
            \end{lemma}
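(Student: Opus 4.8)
The plan is to reduce the claim to a pointwise quadratic-form estimate. Abbreviate $\Delta x \coloneqq x-x'$, $\Delta y\coloneqq y-y'$, $\Delta u \coloneqq \nabla f(x)-\nabla f(x')$ and $\Delta v\coloneqq \nabla g(y)-\nabla g(y')$. Using the block form of $\Phi_\eta$ in \eqref{eq:Phieta1} and of $F$ in \eqref{eq:saddlePointOperator}, a direct expansion gives, after the skew-symmetric cross-terms $\innerproduct{A\Delta x}{\Delta y}$ cancel,
\begin{align*}
\innerproduct{F(\omega)-F(\omega')}{\omega-\omega'}_{\Phi_\eta}
&= \innerproduct{\Delta u}{\Delta x}+\innerproduct{\Delta v}{\Delta y}+\eta\norm{A\Delta x}^2-\eta\norm{A^\top\Delta y}^2 \\
&\quad {}-\eta\innerproduct{A\Delta x}{\Delta v}-\eta\innerproduct{A^\top\Delta y}{\Delta u}.
\end{align*}
The key structural point is that, although $f$ is only $\Lf$-smooth (so $\innerproduct{\Delta u}{\Delta x}$ contributes nothing in the $\norm{\Delta x}^2$ direction), the term $\eta\norm{A\Delta x}^2\geq\eta\mu_A\norm{\Delta x}^2$ does, precisely because $A$ has full column rank; this is exactly why the metric is tilted by $\Phi_\eta$.

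Next I would lower-bound the remaining terms. Convexity and $\Lf$-smoothness of $f$ give the cocoercivity bound $\innerproduct{\Delta u}{\Delta x}\geq\frac{1}{\Lf}\norm{\Delta u}^2$ (with the convention $\frac{1}{0}=\infty$, i.e.\ $\Delta u=0$ when $f$ is affine); $\mg$-strong convexity and $\Lg$-smoothness of $g$ give $\innerproduct{\Delta v}{\Delta y}\geq\frac{\mg}{2}\norm{\Delta y}^2+\frac{1}{2\Lg}\norm{\Delta v}^2$; and $\norm{A^\top\Delta y}^2\leq\norm{A}^2\norm{\Delta y}^2$. The two mixed terms are split by Young's inequality, $|\innerproduct{A\Delta x}{\Delta v}|\leq\frac{1}{2}\norm{A\Delta x}^2+\frac{1}{2}\norm{\Delta v}^2$ and $|\innerproduct{A^\top\Delta y}{\Delta u}|\leq\frac{1}{2}\norm{A}^2\norm{\Delta y}^2+\frac{1}{2}\norm{\Delta u}^2$. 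Collecting coefficients yields a lower bound $c_u\norm{\Delta u}^2+c_v\norm{\Delta v}^2+c_x\norm{\Delta x}^2+c_y\norm{\Delta y}^2$ with $c_u=\frac{1}{\Lf}-\frac{\eta}{2}$, $c_v=\frac{1}{2\Lg}-\frac{\eta}{2}$, $c_x=\frac{\eta\mu_A}{2}$ and $c_y=\frac{\mg}{2}-\frac{3\eta}{2}\norm{A}^2$. Choosing $\eta$ small enough, namely $\eta<\min\{1/\norm{A},\,2/\Lf,\,1/\Lg,\,\mg/(3\norm{A}^2)\}$, makes $c_u,c_v\geq0$ and $c_x,c_y>0$, so that $\innerproduct{F(\omega)-F(\omega')}{\omega-\omega'}_{\Phi_\eta}\geq c(\norm{\Delta x}^2+\norm{\Delta y}^2)$ with $c=\min\{c_x,c_y\}>0$.

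It then remains to convert this Euclidean estimate into a $\Phi_\eta$-weighted one. Since $\eta<1/\norm{A}$ we have $\Phi_\eta\succ0$ (as noted after \eqref{eq:Phieta1}) and $\norm{\omega-\omega'}_{\Phi_\eta}^2=\norm{\Delta x}^2+\norm{\Delta y}^2-2\eta\innerproduct{A\Delta x}{\Delta y}\leq(1+\eta\norm{A})(\norm{\Delta x}^2+\norm{\Delta y}^2)$, whence the statement holds with $\mu_\eta=c/(1+\eta\norm{A})>0$. The computation itself is routine; the only real subtlety — and the whole point of the perturbed metric — is that, since $f$ is not assumed strongly convex, all of the positivity in the $\norm{\Delta x}^2$ direction has to be squeezed out of the single term $\eta\norm{A\Delta x}^2$, which both forces $\eta>0$ and dictates that the Young splittings of the mixed terms and the smallness of $\eta$ be chosen so as not to consume the $\norm{A\Delta x}^2$, $\norm{\Delta y}^2$, $\norm{\Delta u}^2$ and $\norm{\Delta v}^2$ budgets.
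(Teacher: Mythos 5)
Your proof is correct, and it follows the same skeleton as the paper's: expand $\innerproduct{F(\omega)-F(\omega')}{\omega-\omega'}_{\Phi_\eta}$, note the cancellation of the skew-symmetric bilinear terms, extract all of the positivity in the $\norm{\Delta x}^2$ direction from $\eta\norm{A\Delta x}^2\geq\eta\mu_A\norm{\Delta x}^2$, and finally divide by an upper bound on $\lambdamax{\Phi_\eta}$ to pass to the $\Phi_\eta$-weighted norm. The one genuine point of divergence is the treatment of the mixed terms $\eta\innerproduct{A^\top\Delta y}{\Delta u}$ and $\eta\innerproduct{A\Delta x}{\Delta v}$: the paper converts the gradient differences into variable differences via Lipschitz continuity ($\norm{\Delta u}\leq\Lf\norm{\Delta x}$, $\norm{\Delta v}\leq\Lg\norm{\Delta y}$), producing a cross term $-\eta(\Lf+\Lg)\norm{A}\norm{\Delta x}\norm{\Delta y}$ that is then handled by checking positive definiteness of a $2\times 2$ matrix $M_\eta$ via Sylvester's criterion; you instead retain $\norm{\Delta u}^2$ and $\norm{\Delta v}^2$ budgets through cocoercivity (and the strongly-convex/smooth interpolation inequality for $g$) and absorb the mixed terms by Young's inequality. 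Your route yields fully explicit, diagonal coefficient conditions and an explicit admissible range $\eta<\min\{1/\norm{A},2/\Lf,1/\Lg,\mg/(3\norm{A}^2)\}$, at the cost of slightly looser constants from the fixed Young parameters; the paper's route defers the bookkeeping to $\lambdamin{M_\eta}$ and gives the (tight, but implicit) threshold $C_M$. Both are valid; the statement only claims existence of a suitable $\eta$ and $\mu_\eta>0$, so either suffices.
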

            
            \begin{proof}
                Let $\omega =(x,y),\omega' = (x',y') \in\R^{n+m}$ and let $\eta< \frac{1}{\|A\|}$, so that $\Phi_\eta \succ 0$. We have                \allowdisplaybreaks{\begin{IEEEeqnarray*}{rCl}
                    \IEEEeqnarraymulticol{3}{l}{
                    \langle F(\omega) - F(\omega'), \omega-\omega'\rangle_\P
                    }\nonumber \\
                    & = &
                    \innerproduct{\begin{bmatrix}
                    x-x'\\
                    y-y'
                    \end{bmatrix}}{\begin{bmatrix}
                        \nabla f(x)-\nabla f(x') + A^\top (y-y')\\
                        \nabla g(y)-\nabla g(y')-A(x-x') \end{bmatrix}}_{\Phi_{\eta}}
                    \\
                    & = &
                    \innerproduct{\begin{bmatrix}
                        x-x' - \eta A^\top (y-y')\\
                        y-y' - \eta A(x- x')
                    \end{bmatrix}}{\begin{bmatrix}
                        \nabla f(x)-\nabla f(x') + A^\top (y-y')\\
                        \nabla g(y)-\nabla g(y')-A(x-x')
                    \end{bmatrix}}\\
                    & = & \innerproduct{x-x'- \eta A^\top (y-y')}{ \nabla f(x)-\nabla f(x') + A^\top (y-y')}\nonumber\\
                    &&+ \innerproduct{ y-y' - \eta A(x- x')}{\nabla g(y)-\nabla g(y')-A(x-x')}\\
                    & = & \innerproduct{x-x'}{\nabla f(x)-\nabla f(x')} + \innerproduct{y-y'}{\nabla g(y)-\nabla g(y')} \nonumber\\
                    && - \eta \innerproduct{A^\top (y-y')}{A^\top (y-y')} + \eta \innerproduct{A(x-x')}{A(x-x')} \nonumber\\
                    && - \eta \innerproduct{A^\top (y-y')}{\nabla f(x)-\nabla f(x')} - \eta \innerproduct{A(x-x')}{\nabla g(y)-\nabla g(y')}\\
                    & \overset{(i)}{\geq} & \mg\norm{y-y'}^2- \eta \innerproduct{AA^\top (y-y')}{y-y'} + \eta \innerproduct{A^\top A(x-x')}{x-x'}\nonumber\\
                    && - \eta \innerproduct{A^\top (y-y')}{\nabla f(x)-\nabla f(x')} - \eta \innerproduct{A(x-x')}{\nabla g(y)-\nabla g(y')}\\
                    & \geq & \mg\norm{y-y'}^2 - \eta \lambda_{\max}(AA^\top ) \norm{y-y'}^2+\eta \lambda_{\min}(A^\top A)\norm{x-x'}^2 \nonumber\\
                    && - \eta \innerproduct{A^\top (y-y')}{\nabla f(x)-\nabla f(x')} - \eta \innerproduct{A(x-x')}{\nabla g(y)-\nabla g(y')}\\
                    & \overset{(ii)}{\geq} & \mg\norm{y-y'}^2 - \eta \lambda_{\max}(AA^\top ) \norm{y-y'}^2+\eta \lambda_{\min}(A^\top A)\norm{x-x'}^2 \nonumber\\
                    && -\eta (\Lf + \Lg)\norm{A}\norm{x-x'}\norm{y-y'}\\
                    & = & \begin{bmatrix}
                        \norm{x-x'} \\
                        \norm{y-y'}
                    \end{bmatrix} \begin{bmatrix}
                         \eta \lambda_{\min}(A^\top A) &-\frac{1}{2}\eta (\Lf + \Lg)\norm{A}\\
                        -\frac{1}{2}\eta (\Lf + \Lg)\norm{A} &\mg-\eta \lambda_{\max}(AA^\top )
                    \end{bmatrix} \begin{bmatrix}
                        \norm{x-x'} \\
                        \norm{y-y'}
                    \end{bmatrix}\\
                    & \overset{(iii)}{\geq} & \lambda_{\min}(M_{\eta}) \norm{\begin{bmatrix}
                        x-x'\\
                        y-y'
                    \end{bmatrix}}^2,
                \end{IEEEeqnarray*}}
                where we used monotononicity of the gradient and strong convexity of $g$ in (i), in (ii) we used Cauchy-Schwarz and strong smoothness of $f$ and $g$, and  (iii) follows by defining
                \begin{equation}
                    M_{\eta} \coloneqq \begin{bmatrix}
                        \eta \lambda_{\min}(A^\top A) &-\frac{1}{2}\eta (\Lf + \Lg)\norm{A}\\
                        -\frac{1}{2}\eta (\Lf + \Lg)\norm{A} &\mg-\eta \lambda_{\max}(AA^\top )
                    \end{bmatrix}.
                    \label{eq:matrixMEta}
                \end{equation}
                
                We next show that $M_{\eta} \succ 0$ for small enough $\eta$. 
                We have
                \begin{align*}
                     \operatorname{det} (M_{\eta} ) &=  (\eta \lambda_{\min}(A^\top A))(\mg-\eta \lambda_{\max}(AA^\top ))-\frac{1}{4} \eta^2 (\Lf + \Lg)^2\norm{A}^2,
                \end{align*}
                which is positive if and only if
                \begin{equation}
                    \eta < \frac{\mg\lambda_{\min}(A^\top A)}{\lambda_{\min}(A^\top A)\lambda_{\max}(AA^\top )+\frac{1}{4} (\Lf + \Lg)^2\norm{A}^2} \eqqcolon C_M.
                    \label{eq:constantMEta}
                \end{equation}
                Since $\eta \lambda_{\min}(A^\top A)>0$, from Sylvester's criterion  it follows that $M_{\eta} \succ 0$ for all $\eta$ small enough.
                In particular,  for any $\eta < \min\{\frac{1}{\|A\|},C_M \}$, the conclusion follows with
                \begin{align}
                    \mu_\eta = \frac{\lambdamin{M_\eta}}{\| \Phi_\eta\|} >0.
                \end{align}
                
            \end{proof}
            
            \begin{lemma}
                \label{lemma:lipschitzForwardStep}
               Let $\eta < \frac{1}{\norm{A}}$. Then, the saddle-point operator $F$ in \eqref{eq:saddlePointOperator} is Lipschitz in $\norm{\cdot}_{\Phi_\eta}$ with constant  $ L_\eta  = \sqrt{\frac{\lambdamax{\Phi_\eta}}{\lambdamin{\Phi_\eta}}}\sqrt{ \max \{\Lf, \Lg\} ^2 + \norm{A}^2} >0 $.
            \end{lemma}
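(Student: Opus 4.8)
The plan is to reduce the weighted Lipschitz estimate to an unweighted (Euclidean) one through equivalence of norms, and to obtain the Euclidean estimate from the additive decomposition $F = G + S$ of the saddle-point operator, where $G(\omega) = (\nabla f(x), \nabla g(y))$ gathers the two gradients and $S(\omega) = (A^\top y, -Ax)$ is the skew-symmetric coupling. The operator $G$ is $\max\{\Lf,\Lg\}$-Lipschitz (from $L$-smoothness of $f$ and $g$) and $S$ is linear with operator norm $\norm{A}$; the constant $L_\eta$ then results from combining these two facts with spectral bounds on $\Phi_\eta$.

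First I would verify that $\Phi_\eta \succ 0$ for $\eta < 1/\norm{A}$: by a Schur complement argument, exactly as in the proof of \Cref{prop:propertiesPhi}(i), this is equivalent to $I_n - \eta^2 A^\top A \succ 0$, which holds since $\eta^2\norm{A}^2 < 1$. Consequently $\lambdamin{\Phi_\eta} > 0$ and, for every $v \in \R^{n+m}$, $\lambdamin{\Phi_\eta}\norm{v}^2 \le \norm{v}_{\Phi_\eta}^2 \le \lambdamax{\Phi_\eta}\norm{v}^2$. Applying the upper inequality to $v = F(\omega)-F(\omega')$ and the lower one to $v = \omega-\omega'$ gives
\[
\norm{F(\omega)-F(\omega')}_{\Phi_\eta} \le \sqrt{\lambdamax{\Phi_\eta}}\,\norm{F(\omega)-F(\omega')}
\quad\text{and}\quad
\norm{\omega-\omega'} \le \lambdamin{\Phi_\eta}^{-1/2}\,\norm{\omega-\omega'}_{\Phi_\eta},
\]
so it remains to bound the Euclidean Lipschitz constant of $F$.

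For the Euclidean estimate, for $\omega = (x,y)$, $\omega' = (x',y')$ I would write $F(\omega)-F(\omega') = (G(\omega)-G(\omega')) + (S(\omega)-S(\omega'))$ and bound the two terms block-wise: $\norm{G(\omega)-G(\omega')}^2 = \norm{\nabla f(x)-\nabla f(x')}^2 + \norm{\nabla g(y)-\nabla g(y')}^2 \le \Lf^2\norm{x-x'}^2 + \Lg^2\norm{y-y'}^2 \le \max\{\Lf,\Lg\}^2\norm{\omega-\omega'}^2$, and, since $S^\top S = \diag(A^\top A, AA^\top)$, $\norm{S(\omega)-S(\omega')}^2 \le \norm{A}^2\norm{\omega-\omega'}^2$. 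Combining these two block estimates (via the triangle or Young inequality) yields a Euclidean Lipschitz constant of the order $\sqrt{\max\{\Lf,\Lg\}^2 + \norm{A}^2}$, and multiplying by $\sqrt{\lambdamax{\Phi_\eta}/\lambdamin{\Phi_\eta}}$ produces $L_\eta$. The only point needing a little care is the bookkeeping of the cross term $\langle G(\omega)-G(\omega'), S(\omega)-S(\omega')\rangle$ when the two bounds are assembled; in any case an upper bound on $L_\eta$ of this form is all that is needed downstream, since $L_\eta$ is used together with the strong monotonicity of $F$ in $\norm{\cdot}_{\Phi_\eta}$ from \Cref{lemma:stronglyMonotoneForwardStep} to invoke the classical contractivity estimate for the forward step \cite[Prop.~26.16]{bauschkeConvexAnalysisMonotone2017}, yielding the contractivity of \Cref{alg:primalDualGradient}.
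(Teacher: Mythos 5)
Your proposal is correct and follows essentially the same route as the paper: reduce the weighted bound to a Euclidean one via the spectral bounds $\lambdamin{\Phi_\eta}\norm{v}^2\le\norm{v}^2_{\Phi_\eta}\le\lambdamax{\Phi_\eta}\norm{v}^2$, then split $F$ into the block-diagonal gradient part and the skew linear part and bound each block by $\max\{\Lf,\Lg\}$ and $\norm{A}$ respectively. The one point you flag as needing care — the cross term $\langle G(\omega)-G(\omega'),S(\omega)-S(\omega')\rangle$ — is in fact glossed over in the paper, which silently invokes $\norm{a+b}^2\le\norm{a}^2+\norm{b}^2$; that inequality requires the cross term to be nonpositive, which does not hold in general here (e.g.\ $f(x)=x^2/2$, $g=0$, $A=1$), so the exact constant $\sqrt{\max\{\Lf,\Lg\}^2+\norm{A}^2}$ is only rigorously obtained up to a factor $\sqrt{2}$ via Young's inequality (or as $\max\{\Lf,\Lg\}+\norm{A}$ via the triangle inequality). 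As you correctly observe, any such upper bound suffices for the downstream use in \Cref{thm:forwardStepMethod}, where $L_\eta$ only enters the step-size restriction and the contraction rate, so this discrepancy is harmless; your hedged statement is, if anything, the more honest version of the argument.
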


            \begin{proof}
                Let $x,x' \in \R^n$ and $y, y' \in \R^m$ be arbitrary. We have
                \begin{IEEEeqnarray*}{rCl} 
            \frac{1}{\lambdamax{\Phi_\eta} }\norm{F \left (\begin{bmatrix}
                    x\\
                    y
                \end{bmatrix}\right ) - F \left (\begin{bmatrix}
                    x'\\
                    y'
                \end{bmatrix}\right )}^2_{\Phi_\eta} & \leq & 
                    \norm{F \left (\begin{bmatrix}
                    x\\
                    y
                \end{bmatrix}\right ) - F \left (\begin{bmatrix}
                    x'\\
                    y'
                \end{bmatrix}\right )}^2 
                \\
                & = & \norm{\begin{bmatrix}
                     \nabla f(x)-\nabla f(x')\\
                      \nabla g(y)-\nabla g(y')
                \end{bmatrix} + \begin{bmatrix}
                    A^\top (y-y')\\
                    -A(x-x')
                \end{bmatrix}}^2\\
                & \leq & \norm{\begin{bmatrix}
                     \nabla f(x)-\nabla f(x')\\
                      \nabla g(y)-\nabla g(y')
                \end{bmatrix}}^2 + \norm{\begin{bmatrix}
                     A^\top (y-y')\\
                    -A(x-x')
                \end{bmatrix}}^2\\
                & = & \norm{\nabla f(x)-\nabla f(x')}^2 + \norm{ \nabla g(y)-\nabla g(y')}^2 \nonumber\\
                && \norm{ A^\top (y-y')}^2 + \norm{A(x-x')}^2\\
                & \overset{(i)}{\leq} & \left( \Lf^2+ \norm{A}^2\right) \norm{x-x'}^2+\left( \Lg^2+ \norm{A}^2\right) \norm{y-y'}^2 \nonumber \\
                & \leq & 
                \left(\max \{\Lf, \Lg\} ^2 + \norm{A}^2 \right)
                \norm{\begin{bmatrix}
                    x - x'\\
                    y - y'
                \end{bmatrix}}^2
                \\
                & \leq & 
                \frac{1}{\lambdamin{\Phi_\eta }}\left(\max \{\Lf, \Lg\} ^2 + \norm{A}^2 \right)
                \norm{\begin{bmatrix}
                    x - x'\\
                    y - y'
                \end{bmatrix}}^2_{\Phi_\eta},
                \end{IEEEeqnarray*}
                where we used the fact that $f$ and $g$ are smooth in (i).
            \end{proof}


            
    
               

            
           Using the previous lemmas, the following contractivity  result is straightforward. 
            \begin{theorem}
                \label{thm:forwardStepMethod}
                Let $f: \R^n \rightarrow \R$  be convex and $\Lf$ smooth.
                Let $g: \R^m \rightarrow \R$ be $\Lg$ smooth and $\mg$ strongly convex. Let $\mA = \lambdamin{A^\top A}>0$. Let $\eta < \min \left\{\frac{1}{\norm{A}}, C_M \right\}$ with $C_M$ as in \eqref{eq:constantMEta},  $\Phi_\eta \succ 0$ as in \eqref{eq:Phieta1}, $\mu_\eta$ and $L_\eta$  as in \Cref{lemma:stronglyMonotoneForwardStep} and \Cref{lemma:lipschitzForwardStep}, respectively. Then, for any  $0< \alpha < \frac{2 \mu_\eta}{L_\eta^2}$, \Cref{alg:primalDualGradient} is contractive in $\|\cdot\|_{\Phi_\eta}$ with rate
                \begin{align}
                    \rho = \sqrt{1 - 2 \alpha \mu_\eta+ \alpha^2 L_\eta^2 } <1.
                \end{align}
                Thus, for all $k\in \mathbb{N}$, 
                \begin{equation*}
                    \norm{\begin{bmatrix}
                        x^{k}-x^{\star}\\
                        y^{k}-y^{\star}\end{bmatrix}}_{\Phi_{\eta}} \leq \rho^{k}\norm{\begin{bmatrix}
                        x^0 - x^{\star}\\
                        y^0 - y^{\star}\end{bmatrix}}_{\Phi_{\eta}},
                \end{equation*}
                where $(x^\star,y^\star)$ is the unique solution to problem \eqref{eq:bilinearSaddlePointProblem}.
            \end{theorem}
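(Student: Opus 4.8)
The plan is to recognize Algorithm~\ref{alg:primalDualGradient} as the plain forward step $\omega^{k+1} = (\Id - \alpha F)(\omega^k)$ associated with the saddle-point operator $F$ in \eqref{eq:saddlePointOperator}, and then to invoke the classical contractivity estimate for the forward step of a strongly monotone, Lipschitz operator, but carried out in the $\Phi_\eta$-weighted geometry rather than the Euclidean one. First I would observe that a vector $\omega^\star = (x^\star, y^\star)$ is a fixed point of the iteration if and only if $\alpha F(\omega^\star) = \0$, i.e.\ $\0 \in F(\omega^\star)$, which by the first-order optimality conditions means that $(x^\star, y^\star)$ solves \eqref{eq:bilinearSaddlePointProblem}; under \Ctwo{} such a solution exists (Section~\ref{sec:uniquenessofprimaldual}) and its uniqueness follows directly from the $\mu_\eta$-strong monotonicity established in Lemma~\ref{lemma:stronglyMonotoneForwardStep}.

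Next, for arbitrary $\omega, \omega' \in \R^{n+m}$ I would expand the squared $\Phi_\eta$-norm of the difference of forward steps,
\begin{align*}
\| (\Id - \alpha F)(\omega) - (\Id - \alpha F)(\omega') \|_{\Phi_\eta}^2 = \| \omega - \omega' \|_{\Phi_\eta}^2 - 2\alpha \langle F(\omega) - F(\omega'), \omega - \omega' \rangle_{\Phi_\eta} + \alpha^2 \| F(\omega) - F(\omega') \|_{\Phi_\eta}^2 ,
\end{align*}
then bound the middle term from below using the $\mu_\eta$-strong monotonicity of $F$ in $\langle \cdot, \cdot \rangle_{\Phi_\eta}$ (Lemma~\ref{lemma:stronglyMonotoneForwardStep}) and the last term from above using the $L_\eta$-Lipschitz property in $\|\cdot\|_{\Phi_\eta}$ (Lemma~\ref{lemma:lipschitzForwardStep}), to obtain
\begin{align*}
\| (\Id - \alpha F)(\omega) - (\Id - \alpha F)(\omega') \|_{\Phi_\eta}^2 \leq \left( 1 - 2\alpha \mu_\eta + \alpha^2 L_\eta^2 \right) \| \omega - \omega' \|_{\Phi_\eta}^2 .
\end{align*}
Here I would remark that any operator which is simultaneously $\mu_\eta$-strongly monotone and $L_\eta$-Lipschitz satisfies $\mu_\eta \leq L_\eta$ (by Cauchy--Schwarz, $\mu_\eta \| u \|^2 \leq \langle F(x)-F(x'), u \rangle \leq \| F(x)-F(x') \|\, \| u \| \leq L_\eta \| u \|^2$ with $u = x - x'$, all in the $\Phi_\eta$-norm); hence the quadratic $1 - 2\alpha\mu_\eta + \alpha^2 L_\eta^2$ has nonpositive discriminant and is nonnegative for every $\alpha$, so $\rho = \sqrt{1 - 2\alpha\mu_\eta + \alpha^2 L_\eta^2}$ is well defined, and $\rho < 1$ holds precisely when $\alpha^2 L_\eta^2 < 2\alpha\mu_\eta$, i.e.\ for $0 < \alpha < 2\mu_\eta / L_\eta^2$. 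This yields contractivity of the iteration map in $\|\cdot\|_{\Phi_\eta}$ with rate $\rho$; taking $\omega' = \omega^\star$ and iterating then gives $\| \omega^k - \omega^\star \|_{\Phi_\eta} \leq \rho^k \| \omega^0 - \omega^\star \|_{\Phi_\eta}$, as claimed.

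I do not expect a genuine obstacle in this proof: once Lemmas~\ref{lemma:stronglyMonotoneForwardStep} and~\ref{lemma:lipschitzForwardStep} are available, the argument is the standard forward-step expansion in a weighted inner product. The substantive effort sits upstream, in those two lemmas---in particular in choosing $\eta < \min\{1/\|A\|, C_M\}$ small enough that the matrix $M_\eta$ in \eqref{eq:matrixMEta} is positive definite, which is what secures $\mu_\eta > 0$. The only point in the present proof that merits a moment's care is the admissible range of $\alpha$ and the reality of $\rho$, both of which are settled by the $\mu_\eta \leq L_\eta$ remark above.
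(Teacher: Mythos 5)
Your proposal is correct and follows essentially the same route as the paper: the paper's proof simply cites the classical forward-step contraction result \cite[Prop.~26.16(ii)]{bauschkeConvexAnalysisMonotone2017} applied in the $\Phi_\eta$-weighted inner product together with \Cref{lemma:stronglyMonotoneForwardStep} and \Cref{lemma:lipschitzForwardStep}, which is exactly the expansion you write out explicitly. Your additional remark that $\mu_\eta \leq L_\eta$ guarantees $1-2\alpha\mu_\eta+\alpha^2 L_\eta^2 \geq 0$ is a sound (and welcome) verification that the paper leaves implicit.
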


            \begin{proof}
                Clearly, any fixed point of \Cref{alg:primalDualGradient} is a solution to \eqref{eq:bilinearSaddlePointProblem}. The theorem then readily follows in view of \Cref{lemma:stronglyMonotoneForwardStep} and \Cref{lemma:lipschitzForwardStep}, see for instance \cite[Prop.~26.16(ii)]{bauschkeConvexAnalysisMonotone2017}.             
            \end{proof}

        \subsection{Preconditioned Gradient Descent-Ascent}

\begin{algorithm}
        \caption{Preconditioned primal-dual gradient method}
        \label{alg:precPrimalDualGradient}
        \begin{algorithmic}[1]
            \Require $x^0\in \R^n, y^0\in \R^m$, step sizes $\alpha >0, \eta > 0$
            \For{$k = 0, 1, \cdots$}
                \State $a^k  =  \nabla f(x^k)+A^\top  y^k$
                \State $b^k  =  \nabla g(y^k) - A x^k$
                \State $x^{k+1}  =  x^k - \alpha \left (a^k - \eta A^\top b^k\right )$
                \State $y^{k+1}  =  y^k - \alpha \left (b^k - \eta Aa^k \right )$
            \EndFor
        \end{algorithmic}
    \end{algorithm} \Cref{lemma:stronglyMonotoneForwardStep} shows that  saddle point operator $F$ is strongly monotone in $\|\cdot \|_{\Phi_\eta}$, when \Ctwo{} holds and $f$ is smooth.  This is an interesting result, and it paves the way for several extensions. For instance, from the proof of \Cref{lemma:stronglyMonotoneForwardStep}, it is immediate to see that, for all $\omega, \omega' \in \R^{m+n}$
    \begin{align*}
        \langle \Phi_\eta F(\omega) - \Phi_\eta F(\omega'), \omega -\omega' \rangle & =  \langle  F(\omega) -  F(\omega'), \omega -\omega' \rangle_{\Phi_\eta} \\
        & \geq \lambda_{\min}(M_{\eta}) \|\omega -\omega'\|^2,
    \end{align*}
         namely, $\Phi _\eta F$   is strongly monotone in the (unweighted) inner product $\innerproduct{\cdot}{\cdot}$. This suggests to use $\Phi _\eta F$ in the forward step, resulting in \Cref{alg:precPrimalDualGradient}. This algorithm is novel to the best of our knowledge; as for \eqref{alg:smoothForwardBackward}, it requires two sequential updates for each iteration, and it is related to the (accelerated) algorithm in \cite{kovalevAcceleratedPrimalDualGradient2022}. Its peculiarity is  to be contractive in the unweighted norm $\| \cdot\|$.
         
            
            \begin{theorem}
                \label{thm:preconditionedForwardStep}
                Let $f: \R^n \rightarrow \R$  be convex and $\Lf$ smooth.
                Let $g: \R^m \rightarrow \R$ be $\Lg$ smooth and $\mg$ strongly convex. Let $\mA = \lambdamin{A^\top A}>0$. Let $\eta < \min \left\{\frac{1}{\norm{A}}, C_M \right\}$ with $C_M$ as in \eqref{eq:constantMEta},  $\Phi_\eta \succ 0$ as in \eqref{eq:Phieta1}, $\mu_\eta$ and $L_\eta$  as in \Cref{lemma:stronglyMonotoneForwardStep} and \Cref{lemma:lipschitzForwardStep}, respectively, $M_\eta$ as in \eqref{eq:matrixMEta}.
                Let $\alpha < \frac{2 \lambdamin{M_{\eta}}}{(L^2+\norm{A}^2)\lambda_{\max}^2(\Phi_{\eta})}$. Then \Cref{alg:precPrimalDualGradient} is contractive in $\|\cdot\|$ with rate
                \begin{equation*}
                    \rho = \sqrt{1- 2\alpha\lambdamin{M_{\eta}}+\alpha^2 \lambda_{\max}^2(\Phi_{\eta}) \left( \max\{\Lf,\Lg\}^2+\norm{A}^2 \right)} <1.
                \end{equation*}
                Thus, for all $k \in \mathbb N$,
                \begin{equation*}
                    \norm{\begin{bmatrix}
                        x^{k}-x^{\star}\\
                        y^{k}-y^{\star}
                    \end{bmatrix}} \leq \rho^{k} \norm{\begin{bmatrix}
                        x^0 - x^{\star}\\
                        y^0 - y^{\star}
                    \end{bmatrix}},
                \end{equation*}
                where $(x^\star,y^\star)$ is the unique solution to \eqref{eq:bilinearSaddlePointProblem}.
            \end{theorem}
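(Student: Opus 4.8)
The plan is to observe that \Cref{alg:precPrimalDualGradient} is nothing but a plain (unpreconditioned) forward step on the composite operator $\Phi_\eta F$, and then to invoke the classical contractivity estimate for the forward step of a strongly monotone, Lipschitz operator. Writing out $a^k$ and $b^k$, one has $\Phi_\eta F(\omega^k) = (a^k - \eta A^\top b^k,\; b^k - \eta A a^k)$, so the iteration reads $\omega^{k+1} = (\Id - \alpha\,\Phi_\eta F)(\omega^k)$. Since $\eta < 1/\|A\|$ makes $\Phi_\eta \succ 0$ (hence invertible), the fixed points of this map are exactly the zeros of $F$, i.e., the solutions of \eqref{eq:bilinearSaddlePointProblem}, whose existence and uniqueness is the lemma of \Cref{sec:uniquenessofprimaldual}.

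Next I would record the two operator properties of $\Phi_\eta F$ needed for the forward-step bound, both in the \emph{unweighted} inner product. Strong monotonicity is already available: as noted just before the theorem (and as comes out of the proof of \Cref{lemma:stronglyMonotoneForwardStep}), $\langle \Phi_\eta F(\omega) - \Phi_\eta F(\omega'),\, \omega - \omega'\rangle = \langle F(\omega) - F(\omega'),\, \omega - \omega'\rangle_{\Phi_\eta} \ge \lambdamin{M_\eta}\,\|\omega - \omega'\|^2$, so $\Phi_\eta F$ is $\lambdamin{M_\eta}$-strongly monotone. For Lipschitz continuity I would reuse the chain of inequalities from the proof of \Cref{lemma:lipschitzForwardStep} up to the bound $\|F(\omega) - F(\omega')\|^2 \le (\max\{\Lf,\Lg\}^2 + \|A\|^2)\,\|\omega - \omega'\|^2$, and then multiply through by $\|\Phi_\eta\| = \lambdamax{\Phi_\eta}$ to get that $\Phi_\eta F$ is $\ell$-Lipschitz with $\ell := \lambdamax{\Phi_\eta}\sqrt{\max\{\Lf,\Lg\}^2 + \|A\|^2}$, i.e., $\ell^2 = \lambda_{\max}^2(\Phi_\eta)\big(\max\{\Lf,\Lg\}^2 + \|A\|^2\big)$.

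Finally, I would apply the standard forward-step estimate (the same result cited for \Cref{thm:forwardStepMethod}, \cite[Prop.~26.16(ii)]{bauschkeConvexAnalysisMonotone2017}): if $\mathcal G$ is $\mu$-strongly monotone and $\ell$-Lipschitz and $0 < \alpha < 2\mu/\ell^2$, then $\Id - \alpha\mathcal G$ is a contraction in $\|\cdot\|$ with rate $\sqrt{1 - 2\alpha\mu + \alpha^2\ell^2}$. Substituting $\mu = \lambdamin{M_\eta}$ and the $\ell^2$ above reproduces verbatim the step-size range and the rate $\rho$ in the statement; the geometric decay of $\|\omega^k - \omega^\star\|$ is then immediate by setting $\omega' = \omega^\star$ in the contraction inequality.

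I do not expect a real obstacle here: the substantive work — that preconditioning by $\Phi_\eta$ upgrades $F$, which is only strongly monotone in the $\Phi_\eta$-weighted geometry, into an operator that is strongly monotone \emph{and} Lipschitz in the plain Euclidean geometry — is already done in \Cref{lemma:stronglyMonotoneForwardStep} and the proof of \Cref{lemma:lipschitzForwardStep}. The only care needed is bookkeeping: checking that the algorithm's update direction equals the matrix product $\Phi_\eta F(\omega^k)$, and that the constants $\mu = \lambdamin{M_\eta}$, $\ell^2 = \lambda_{\max}^2(\Phi_\eta)(\max\{\Lf,\Lg\}^2 + \|A\|^2)$ plugged into $2\mu/\ell^2$ and $\sqrt{1 - 2\alpha\mu + \alpha^2\ell^2}$ give the stated formulas.
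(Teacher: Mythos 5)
Your proposal is correct and follows essentially the same route as the paper: rewrite the iteration as $\omega^{k+1}=(\Id-\alpha\Phi_\eta F)(\omega^k)$, note that $\Phi_\eta F$ is $\lambdamin{M_\eta}$-strongly monotone and $\lambdamax{\Phi_\eta}\sqrt{\max\{\Lf,\Lg\}^2+\|A\|^2}$-Lipschitz in the unweighted norm, and apply the standard forward-step contraction estimate. In fact you supply more detail than the paper's two-line argument, correctly extracting the unweighted Lipschitz bound from the intermediate step of \Cref{lemma:lipschitzForwardStep} rather than from the weighted constant $L_\eta$ itself, which is exactly what the stated rate requires.
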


            \begin{proof}
            \Cref{alg:precPrimalDualGradient} can be written as $\omega^{k+1} = (\operatorname{Id} - \alpha \Phi_\eta F) (\omega^k)$. Since $\Phi_\eta \succ 0$, the fixed points of \Cref{alg:precPrimalDualGradient} coincide with the zeros of $F$, equivalently withe the solutions to \eqref{eq:bilinearSaddlePointProblem}. The result follows because the operator $\Phi_\eta F$ is strongly monotone and Lipschitz continuous (in $\| \cdot\|$), see for instance \cite[Prop.~26.16(ii)]{bauschkeConvexAnalysisMonotone2017}. 
            \end{proof}
            
\end{document}